\numberwithin{equation}{section}
\newcommand{\margnote}[1]{
\ifthenelse{\boolean{shownotes}}%
{\marginpar{\raggedright\tiny\texttt{#1}}}%
{}%
}
\newcommand{\hole}[1]{
\ifthenelse{\boolean{shownotes}}%
{\begin{center} \fbox{ \rule {.25cm}{0cm}
\rule[-.1cm]{0cm}{.4cm} \parbox{.85\textwidth}{\begin{center}
\texttt{#1}\end{center}} \rule {.25cm}{0cm}}\end{center}}
{}
}
\theoremstyle{plain}
\newtheorem{lemma}{Lemma}[section]
\newtheorem{theorem}[lemma]{Theorem}
\newtheorem{proposition}[lemma]{Proposition}
\newtheorem{corollary}[lemma]{Corollary}
\theoremstyle{definition}
\newtheorem{remark}[lemma]{Remark}
\newtheorem{definition}[lemma]{Definition}
\theoremstyle{remark}
\newcommand{\Id}{\mathrm{Id}}
\newcommand{\A}{\mathbb{A}}
\newcommand{\I}{\mathbb{I}}
\newcommand{\F}{\mathbb{F}}
\newcommand{\R}{\mathbb{R}}
\newcommand{\C}{\mathbb{C}}
\newcommand{\Z}{\mathbb{Z}}
\newcommand{\N}{\mathbb{N}}
\newcommand{\bbS}{\mathbb{S}}
\newcommand{\U}{\mathbb{U}}
\newcommand{\M}{\mathbb{M}}
\newcommand{\tM}{\widetilde{M}}
\newcommand{\tG}{\widetilde{G}}
\newcommand{\bF}{\overline{F}}
\newcommand{\bA}{\overline{A}}
\newcommand{\bU}{\overline{U}}
\newcommand{\bV}{\overline{V}}
\newcommand{\bw}{\overline{w}}
\newcommand{\cT}{{\mathcal{T}}}
\newcommand{\cL}{{\mathcal{L}}}
\newcommand{\cD}{{\mathcal{D}}}
\newcommand{\cS}{{\mathcal{S}}}
\newcommand{\cP}{{\mathcal{P}}}
\newcommand{\cA}{{\mathcal{A}}}
\newcommand{\cB}{{\mathcal{B}}}
\newcommand{\vep}{\varepsilon}
\renewcommand{\Re}{\mathrm{Re}\,} 
\renewcommand{\Im}{\mathrm{Im}\,}
\newcommand{\sgn}{\mathrm{sgn}\,}
\newcommand{\Span}{\mathrm{span}\,}
\newcommand{\ep}{\epsilon}
\newcommand{\ess}{\sigma_\mathrm{\tiny{ess}}}
\newcommand{\ptsp}{\sigma_\mathrm{\tiny{pt}}}
\newcommand{\Ldper}{L^2_\mathrm{\tiny{per}}}
\newcommand{\Huper}{H^1_\mathrm{\tiny{per}}}
\newcommand{\Hdper}{H^2_\mathrm{\tiny{per}}}
\newcommand{\Hmper}{H^m_\mathrm{\tiny{per}}}
\newcommand{\<}{\langle}
\renewcommand{\>}{\rangle}
\begin{document}

\title[Spectral instability of periodic waves for viscous balance laws]{Existence and spectral instability of bounded spatially  periodic traveling waves for scalar viscous balance laws}

\author[E. \'{A}lvarez]{Enrique \'{A}lvarez}
 
\address{{\rm (E. \'{A}lvarez)} Instituto de 
Investigaciones en Matem\'aticas Aplicadas y en Sistemas\\Universidad Nacional Aut\'onoma de 
M\'exico\\ Circuito Escolar s/n, Ciudad Universitaria, C.P. 04510 Cd. de M\'{e}xico (Mexico)}

\email{enrique.alvarez@ciencias.unam.mx}

\author[R. G. Plaza]{Ram\'on G. Plaza}

\address{{\rm (R. G. Plaza)} Instituto de 
Investigaciones en Matem\'aticas Aplicadas y en Sistemas\\Universidad Nacional Aut\'onoma de 
M\'exico\\ Circuito Escolar s/n, Ciudad Universitaria, C.P. 04510 Cd. de M\'{e}xico (Mexico)}

\email{plaza@mym.iimas.unam.mx}

\begin{abstract}
This paper studies both existence and spectral stability properties of bounded spatially periodic traveling wave solutions to a large class of scalar viscous balance laws in one space dimension with a reaction function of monostable or Fisher-KPP type. Under suitable structural assumptions, it is shown that this class of equations underlies two families of periodic waves. The first family consists of small amplitude waves with finite fundamental period which emerge from a Hopf bifurcation around a critical value of the wave speed. The second family pertains to arbitrarily large period waves which arise from a homoclinic bifurcation and tend to a limiting traveling (homoclinic) pulse when their fundamental period tends to infinity. For both families, it is shown that the Floquet (continuous) spectrum of the linearization around the periodic waves intersects the unstable half plane of complex values with positive real part, a property known as spectral instability. For that purpose, in the case of small-amplitude waves it is proved that the spectrum of the linearized operator around the wave can be approximated by that of a constant coefficient operator around the zero solution and determined by a dispersion relation which intersects the unstable complex half plane. In the case of large period waves, we verify that the family satisfies the assumptions of the seminal result by Gardner \cite{Grd2} of convergence of periodic spectra in the infinite-period limit to that of the underlying homoclinic wave, which is unstable. A few examples are discussed.
\end{abstract}

\keywords{viscous balance laws, periodic traveling waves, Floquet spectrum, spectral instability}

\subjclass[2010]{35B35, 35C07, 35B10, 35K55}

\maketitle

\setcounter{tocdepth}{1}



\section{Introduction}

In this contribution, we consider scalar viscous balance laws in one space dimension of the form,
\begin{equation}
\label{gvbl}
u_t + f(u)_x = \nu u_{xx} + g(u), 
\end{equation}
where $u = u(x,t) \in \R$ and $x \in \R$, $t > 0$. Here $f = f(u)$ denotes a nonlinear flux function and $g = g(u)$ is a balance (or reaction) term expressing production of the quantity $u$. Viscosity (or diffusion) effects are modeled through the Laplace operator applied to $u$ with constant viscosity coefficient, $\nu > 0$. When $f \equiv 0$ the equation reduces to the standard reaction-diffusion equation for which the existence and the stability of traveling waves have been widely investigated (see, e.g., \cite{AronWein75,Fif79,FiM77} and the many references therein).

Scalar viscous balance laws typically arise as parabolic regularizations of hyperbolic balance laws of the form (cf. \cite{Da84,Da4e}),
\begin{equation}
\label{hbl}
u_t + f(u)_x = g(u), 
\end{equation}
also known as \emph{inhomogeneous} conservation laws \cite{FanHa93}, describing idealized inviscid problems in which only reaction and convective effects are taken into consideration (for example, the equation \eqref{hbl} may describe the evolution of a density $u$ of point particles moving with speed $f'(u)$ and reacting at rate $g(u)/u$). In the theory of scalar conservation laws (cf. \cite{Da4e,La57}), it is well known that the convexity of the flux function $f$ plays a key role and determines the structure of entropy solutions. The introduction of the reaction term $g(u)$ (which may describe production/consumption, chemical reactions or combustion, among other interactions) is capable of drastically change the long time behavior of solutions, as was demonstrated by Mascia in both the convex \cite{Masc97a} and non-convex cases \cite{Masc98,Masc00}. Applications of balance laws, although not scalar, include models for roll waves \cite{BJNRZ10,Nob07}, nozzle flow \cite{ChGl96}, or combustion theory \cite{CoMR86}. Thus, scalar and systems of balance laws have been the subject of investigations for a long time (for an abridged list of references, see \cite{Da84,FanHa93,FanHa95,Masc97a,Masc98,Masc00,MasSi97,Snst97}; see also the recent paper \cite{DuRo20} on scalar equations). Since the effects of diffusion are important in many physical applications (such as viscous fluid flow \cite{Bur48} and semiconductor theory \cite{HaSch06,Scholl01}), viscous balance laws have been proposed to account for such effects. In the scalar case, it is common to find viscous balance laws as tools to study viscous profiles as approximations of their inviscid wave counterparts when the viscosity coefficient $\nu$ is small (see, for example, \cite{CroMas07,Hae00,Hae03}). In sum, scalar viscous balance laws represent simplified models that combine diffusion (viscosity), convection and reaction effects into one single equation.

We are interested in analyzing spatially periodic traveling wave solutions to a large class of viscous balance laws of the form \eqref{gvbl}. In the literature, there exist several works addressing the stability \emph{per se} of traveling wave solutions to equation \eqref{gvbl}. For example, the existence and nonlinear (asymptotic) stability of traveling \emph{front} solutions for viscous balance laws have been studied by Wu and Xing \cite{WuXi05}. In particular, they analyze the spectrum of the linearized operator around the fronts on the real line, leading to the concept of spectral stability. Their analysis has been extended to the non-convex case in \cite{Xing05}. Scalar viscous balance laws with degenerate viscosity coefficients have been recently studied by Xu \emph{et al.} \cite{XuJJ16}. Regarding spatially periodic traveling waves, there exist many papers addressing the existence problem (and asymptotic behavior) for specific equations; see, for instance, \cite{LeHa16,LuYJ07,Vall18a,Vall18b,ZLZ13}, among others. Up to our knowledge, the stability of periodic wave solutions to equations in the general form \eqref{gvbl} has not been studied before in the literature.

In this paper, we prove the existence of bounded, spatially periodic traveling wave solutions to a large class of equations of the general form \eqref{gvbl} under suitable structural assumptions. In particular, we suppose that the reaction function is of monostable or Fisher-KPP type (see hypothesis \eqref{H2} below). We prove that this class of equations underlies two families of periodic waves. The first family consists of small amplitude waves with finite fundamental period which emerge from a Hopf bifurcation around a critical value of the wave speed. The Hopf bifurcation can be either subcritical or supercritical. The second family includes arbitrarily large period waves arising from a homoclinic bifurcation around a second critical value of the speed, and which tend to a limiting traveling (homoclinic) pulse when their fundamental period tends to infinity. First, we apply Melnikov's integral method for perturbed homoclinic orbits to show the existence of a traveling pulse for the viscous balance law. The wave speed of the latter is precisely the bifurcation parameter value under which there happens the bifurcation of a limit cycle from the homoclinic loop of a saddle with non-zero saddle quantity, as a consequence of Andronov-Leontovich's theorem. In this fashion, we exhibit the existence of bounded periodic waves with large fundamental period tending to infinity (the ``period" of the homoclinic loop) as the speed of the wave tends to the critical homoclinic speed.

In addition, we also study the stability properties of both families of periodic waves. For that purpose, we linearize the equation around the wave under consideration and study the associated spectral problem. This procedure leads to the concept of \emph{spectral stability}, or, in lay terms, the property that the linearized operator around the wave is ``well-behaved" in the sense that it does not support eigenvalues with positive real part (see Definition \ref{defspectstab} below). In the case of periodic waves, the spectrum of the linearized operator is continuous and we are required to study the \emph{Floquet spectrum}, comprised by curves of spectrum on the complex plane. In this paper, it is shown that both families of periodic waves are spectrally unstable, that is, that the corresponding Floquet spectra intersect the unstable complex half plane of eigenvalues with positive real part. In the case of small amplitude waves, we prove that the spectrum of the linearized operator around the wave can be approximated by that of a constant coefficient operator around the zero solution and determined by a dispersion relation which intersects the unstable complex half plane. Applying standard perturbation theory of linear operators, we show that unstable point eigenvalues of the constant coefficient operator split into neighboring curves of Floquet spectra of the underlying small amplitude waves. In the case of large period waves nearby homoclinic loops, the pioneering work by Gardner \cite{Grd2} characterized the spectrum of the linearized operator around such periodic waves and related it to that of the linearized operator around the homoclinic loop (traveling pulse for the PDE). Gardner proved the convergence of both spectra in the infinite period limit and, under very general conditions, that loops of continuous periodic spectra bifurcate from isolated point spectra of the limiting homoclinic wave. Hence, the typical spectral instability of the traveling pulse determines the spectral instability of the periodic waves under consideration. We then verify the hypotheses of a recent refinement of Gardner's result due to Yang and Zumbrun \cite{YngZ19} to conclude the spectral instability of the family. Finally, we present some examples of viscous balance laws that satisfy the hypotheses in this paper, for which our existence and instability results apply.

\subsection{Equations and assumptions}
\label{seceqsass}

For concreteness, in this paper we consider scalar viscous balance laws in one space dimension of the form
\begin{equation}
\label{vbl}
u_t + f(u)_x = u_{xx} + g(u),
\end{equation}
where $f$ and $g$ are sufficiently regular functions and where the viscosity coefficient has been set to $\nu = 1$. It is assumed that $g$ is of Fisher-KPP type \cite{Fis37,KPP37} having two equilibrium points, one stable and one unstable, and which models growth of logistic type (see assumption \eqref{H2} below). Thus, the model equation under consideration covers a large class of general viscous balance laws as $f$ is not required to be strictly convex. When one analyzes the convergence of viscous waves to their hyperbolic counterparts one cannot get rid of the viscosity parameter by simple scaling due to the presence of the reaction term and, hence, the problem is singularly perturbed (see \cite{Hae00}). In the present context, however, the analysis of existence and stability of the viscous wave does not involve the inviscid limit and it is possible to normalize the space variable and the flux function, $x \to x/\sqrt{\nu}$ and $f \to f/\sqrt{\nu}$, respectively, so that the model \eqref{gvbl} reduces to equation \eqref{vbl} without loss of generality.

In the sequel, we make the following assumptions on the nonlinear functions $f$ and $g$:
\begin{equation}
\label{H1}
\tag{H$_1$}
f \in C^4(\R).
\end{equation}
\begin{equation}
\label{H2}
\tag{H$_2$}
\begin{minipage}[c]{5in}
$g \in C^3(\R)$ and it is of Fisher-KPP type, satisfying
\[
	\begin{aligned}
	&g(0) = g(1) =0,\\
	&g'(0) > 0, \, g'(1) < 0,\\
	&g(u)>0 \, \textrm{ for all } \, u \in(0,1),\\
	&g(u)<0 \, \textrm{ for all } \, u \in (-\infty,0).
	\end{aligned}
\]
\end{minipage}
\end{equation}
\begin{equation}
\label{H3}
\tag{H$_3$}
\begin{minipage}[c]{5in}
There exists $u_* \in (-\infty, 0)$ such that
\[
\int_{u_*}^0 g(s) \, ds + \int_0^1 g(s) \, ds= 0.
\]
\end{minipage}
\end{equation}

\smallskip

We also make some assumptions on the interaction between $f$ and $g$. For instance, we suppose that:
\begin{equation}
\label{H4}
\tag{H$_4$}
\overline{a}_0 := f'''(0) - \frac{f''(0) g''(0)}{\sqrt{g'(0)}} \neq 0, \qquad \text{(genericity condition).}
\end{equation}

Observe that, under \eqref{H2} and \eqref{H3}, $u_* \in (-\infty,0)$ is the unique value such that \eqref{H3} holds and, moreover,
\[
\int_u^1 g(s) \, ds > 0, \qquad \text{for all } \; u \in (u_*,1).
\]
Therefore we can define 
\begin{equation}
\label{defPsi}
\gamma(u):= \sqrt{2\int_u^1 g(s) \, ds}, \qquad u \in (u_*,1),
\end{equation}
as well as the integrals
\begin{equation}
\label{lasIs}
\begin{aligned}
I_0 &:= \int_{u_*}^1 \gamma(s) \, ds > 0,\\
I_1 &:= \int_{u_*}^1 f'(s) \gamma(s) \, ds,\\
J &:= 2 \int_{u_*}^1 f'(s) \sqrt{1+\gamma'(s)^2} \, ds,\\
\text{and} \quad L &:= 2 \int_{u_*}^1 \sqrt{1+\gamma'(s)^2} \, ds.
\end{aligned}
\end{equation}
Notice that $L$ and $J$ are, typically, elliptic integrals; $L$ is simply the length of the curve defined by the function $\gamma$ and it clearly exists; since $f$ is of class $C^4$ this implies that $J$ exists as well. Based on the above definitions we further assume:
%

\begin{align}
I_0 J &\neq L I_1,  &\text{(non-degeneracy condition),} \label{H5} \tag{H$_5$}\\
f'(1) &\neq \displaystyle{\frac{I_1}{I_0}},  &\text{(saddle condition).} \label{H6} \tag{H$_6$}
\end{align}

\begin{remark}
Hypothesis \eqref{H1} is a minimal regularity assumption on $f$ to guarantee the existence of small amplitude periodic waves. We emphasize that we do not require the nonlinear flux $f$ to be strictly convex ($f''(u) \ge \delta > 0$ for all $u$) and that our results apply to general flux functions with inflection points (such as the Buckley-Leverett flux function \eqref{BLf} below) which are useful in the description of non-classical shocks and phase transitions (see LeFloch \cite{LeF02} for further information). Assumption \eqref{H2} specifies a balance (or production) term with logistic response, with an unstable equilibrium point at $u = 0$ and a stable one at $u=1$, plus the required regularity. Hypothesis \eqref{H3} is the balance of forces condition (if we interpret $g$ as the derivative of a potential) necessary for the existence of a homoclinic orbit. Assumptions \eqref{H4}, \eqref{H5} and \eqref{H6} determine the interaction conditions between $f$ and $g$ which are sufficient to guarantee the existence of bounded periodic waves. In particular, hypothesis \eqref{H4} ensures a Hopf bifurcation from which small-amplitude periodic orbits with bounded fundamental period emerge. Assumptions \eqref{H5} and \eqref{H6} make sure that a bifurcation from a limit cycle occurs from a homoclinic loop (traveling pulse type solution) giving rise to bounded periodic waves with large fundamental period.
\end{remark}

\subsection{Main results}
\label{secmainres}

The first theorem pertains to the existence of small-amplitude bounded periodic traveling wave solutions to equations of the form \eqref{vbl} that emerge from a Hopf bifurcation around a critical value of the speed.

\begin{theorem}[existence of small amplitude periodic waves]
\label{thmexbded}
Suppose that conditions \eqref{H1} thru \eqref{H4} hold. Then there exist a critical speed given by
\begin{equation}
\label{defc0}
c_0 := f'(0),
\end{equation}
and $\ep_0 > 0$ sufficiently small such that, for each $0 < \ep < \ep_0$ there exists a unique periodic traveling wave solution to the viscous balance law \eqref{vbl} of the form $u(x,t) = \varphi^\ep(x - c(\ep)t)$, traveling with speed $c(\ep) = c_0 + \ep$ if $\overline{a}_0 > 0$, or $c(\ep) = c_0 - \ep$ if 
$\overline{a}_0 < 0$, and with fundamental period,
\begin{equation}
\label{fundphopf}
T_\ep = \frac{2 \pi}{\sqrt{g'(0)}} + O(\ep), \qquad \text{as } \, \ep \to 0^+.
\end{equation}
The profile function $\varphi^\ep = \varphi^\ep(\cdot)$ is of class $C^3(\R)$, satisfies $\varphi^\ep(z + T_\ep) = \varphi^\ep(z)$ for all $z \in \R$ and is of small amplitude, more precisely,
\begin{equation}
\label{bdsmalla}
|\varphi^\ep(z)|, |(\varphi^\ep)'(z)| \leq C \sqrt{\ep},
\end{equation}
for all $z \in \R$ and some uniform $C > 0$.
\end{theorem}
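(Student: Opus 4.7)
The plan is to recast the profile equation as a planar vector field depending on the wave speed $c$ and apply the Andronov--Hopf bifurcation theorem at $c = c_0$. Substituting the travelling-wave ansatz $u(x,t) = \varphi(x-ct)$ into \eqref{vbl} and setting $v = \varphi'$ produces the first-order system
\begin{equation*}
\varphi' = v, \qquad v' = (f'(\varphi)-c)\,v - g(\varphi),
\end{equation*}
whose rest points under \eqref{H2} are $(0,0)$ and $(1,0)$. At the origin the Jacobian has trace $f'(0)-c$ and determinant $g'(0)>0$, so purely imaginary eigenvalues $\pm i\sqrt{g'(0)}$ occur exactly at $c_0 := f'(0)$, and the transversality hypothesis is immediate because $\partial_c\bigl(\tfrac12(f'(0)-c)\bigr) = -\tfrac12 \neq 0$.

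The second and crucial step is to verify that the first Lyapunov (focal) coefficient $\ell_1$ of this Hopf point is nonzero. For this I would first rescale $v \mapsto v/\omega$ with $\omega := \sqrt{g'(0)}$ so that the linear part at $c=c_0$ becomes the standard rotation with angular frequency $\omega$, then Taylor-expand $f'$ and $g$ at $0$ up to third order, and apply the classical planar formula for $\ell_1$ in terms of the quadratic and cubic coefficients of the resulting vector field. A direct computation would identify $\ell_1$, up to a strictly positive multiplicative constant, with
\[
\overline{a}_0 = f'''(0) - \frac{f''(0)\,g''(0)}{\sqrt{g'(0)}},
\]
which is nonzero by \eqref{H4}. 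The Andronov--Hopf theorem then yields, for each sufficiently small $\ep > 0$, a unique (modulo translations) small-amplitude limit cycle of the planar system when the bifurcation parameter is moved in the direction dictated by the sign of $\ell_1$ (equivalently of $\overline{a}_0$), which is exactly the rule $c(\ep) = c_0 + \sgn(\overline{a}_0)\,\ep$ stated in the theorem.

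The remaining assertions follow from the standard Hopf normal form: the amplitude of the bifurcating cycle scales as $\sqrt{\ep}$, yielding the uniform bound \eqref{bdsmalla} for both $\varphi^\ep$ and $(\varphi^\ep)'$; the fundamental period equals $2\pi/\omega$ at $\ep=0$ plus a smooth $O(\ep)$ correction, producing \eqref{fundphopf}; and the regularity $\varphi^\ep \in C^3(\R)$ is a consequence of \eqref{H1}--\eqref{H2} combined with standard smoothness for ODEs with $C^3$ right-hand side. I expect the main obstacle to sit in the second step, namely the algebraic bookkeeping required to collapse the generic third-order focal-value expression into the compact, invariant combination $\overline{a}_0$: the corrective term $f''(0)g''(0)/\sqrt{g'(0)}$ appears only after one tracks how the quadratic nonlinearities interact through the near-identity change of coordinates used to put the system in Poincar\'e--Birkhoff normal form, and it is not visible in the bare cubic coefficient $f'''(0)$.
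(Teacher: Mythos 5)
Your proposal follows essentially the same route as the paper: reduce to the planar profile system, identify $c_0 = f'(0)$ as the Hopf value, verify transversality ($d_0=-1/2$), compute the first Lyapunov coefficient, show it is proportional to $\overline{a}_0$, and invoke Andronov--Hopf. The only difference is computational: you propose to rescale $v\mapsto v/\omega$ so that the linear part becomes a standard rotation before applying the ``classical'' formula for $\ell_1$, whereas the paper applies the Guckenheimer--Holmes focal-value formula directly to the system as written, at the cost of having to verify the often-overlooked sign condition $\sgn\omega_0 = \sgn G_U(0,0,c_0)$ (a point the paper singles out in a remark). Your normal-form route sidesteps that sign issue but carries more change-of-variables bookkeeping; both lead to $a_0 = \overline{a}_0/16$ and the same bifurcation direction $c(\ep) = c_0 + \sgn(\overline{a}_0)\,\ep$ once one combines $\sgn(\ell_1)$ with $\sgn(d_0) < 0$ in the Andronov--Hopf statement, so the plan is sound.
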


The second theorem guarantees the existence of large-period, bounded periodic waves that emerge from a homoclinic bifurcation around a critical value for the speed, which is the speed of the homoclinic orbit. The existence of a homoclinic orbit (traveling pulse solution) is a consequence of Melnikov's method (see Theorem \ref{theoexisthomo} below).

\begin{theorem}[existence of large period waves]
\label{thmexlarge}
Under assumptions \eqref{H1} - \eqref{H3}, \eqref{H5} and \eqref{H6}, there is a critical speed given by
\begin{equation}
\label{defc1}
c_1 := \frac{I_1}{I_0},
\end{equation}
such that there exists a traveling pulse solution (homoclinic orbit) to equation \eqref{vbl} of the form  $u(x,t) = \varphi^0(x - c_1 t)$, traveling with speed $c_1$ and satisfying $\varphi^0 \in C^3(\R)$ and
\[
|\varphi^0(z) - 1|, |(\varphi^0)'(z)| \leq C e^{-\kappa |z|},
\]
for all $z \in \R$ and some $\kappa > 0$. In addition, one can find $\ep_1 > 0$ sufficiently small such that, for each $0 < \ep < \ep_1$ there exists a unique periodic traveling wave solution to the viscous balance law \eqref{vbl} of the form $u(x,t) = \varphi^\ep(x - c(\ep)t)$, traveling with speed $c(\ep) = c_1 + \ep$ if $f'(1) < c_1$ or $c(\ep) = c_1 - \ep$ if $f'(1) > c_1$, with fundamental period
\begin{equation}
\label{fundphomo}
T_\ep = O(| \log \ep |) \to \infty, 
\end{equation}
and amplitude
\begin{equation}
\label{bdO1}
|\varphi^\ep(z)|, |(\varphi^\ep)'(z)| = O(1),
\end{equation}
as $\ep \to 0^+$. Moreover, the family of periodic orbits converge to the homoclinic or traveling pulse solution as $\ep \to 0^+$ and satisfy the bounds (after a suitable reparametrization of $z$),
\begin{equation}
\label{boundw1}
\sup_{z \in [-\frac{T_\ep}{2}, \frac{T_\ep}{2}]} \left( |\varphi^0(z) - \varphi^\ep(z)| + |(\varphi^0)'(z) - (\varphi^\ep)'(z)  |\right) \leq C \exp \Big( \!- \kappa \frac{T_\ep}{2}\Big), 
\end{equation}
\begin{equation}
\label{boundc1}
| c_1 - c(\ep)| = \ep \leq C \exp \big( \!- \kappa T_\ep \big),
\end{equation}
for some uniform $C > 0$, the same $\kappa > 0$ and for all $0 < \ep < \ep_1$.
\end{theorem}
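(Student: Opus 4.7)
The plan is to recast the PDE as a planar ODE for the wave profile and then apply a homoclinic--loop bifurcation theorem to generate the periodic family.

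First, making the standard traveling-wave ansatz $u(x,t) = \varphi(z)$ with $z = x - ct$, the viscous balance law \eqref{vbl} reduces to
\begin{equation*}
\varphi'' + (c - f'(\varphi))\varphi' + g(\varphi) = 0,
\end{equation*}
which, written as the first-order system $\varphi' = v$, $v' = (f'(\varphi) - c)v - g(\varphi)$, has equilibria at $(0,0)$ and $(1,0)$. The Jacobian at $(1,0)$ has determinant $g'(1) < 0$ by \eqref{H2}, so $(1,0)$ is a hyperbolic saddle for every $c \in \R$, with unstable/stable eigenvalues
\[
\lambda_\pm(c) = \tfrac{1}{2}(f'(1) - c) \pm \tfrac{1}{2}\sqrt{(f'(1)-c)^2 + 4|g'(1)|}.
\]
The unperturbed, conservative system obtained by dropping the friction-like term $(c - f'(\varphi))v$ is Hamiltonian with energy $\tfrac12 v^2 + G(\varphi)$, $G' = g$; condition \eqref{H3} is precisely the balance of energy that forces the existence of a homoclinic loop based at $(1,0)$ and passing through $(u_*,0)$. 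Persistence of this loop at the critical speed $c_1 = I_1/I_0$ (cf.\ \eqref{defc1}) is the content of Theorem \ref{theoexisthomo}, proved via Melnikov's method; the exponential decay of $\varphi^0 - 1$ and $(\varphi^0)'$ follows from hyperbolicity of the saddle.

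To produce the periodic family I would invoke the Andronov--Leontovich theorem on the bifurcation of a limit cycle from a homoclinic loop of a hyperbolic saddle with non-zero saddle quantity. The saddle quantity at $(1,0)$ at the critical parameter is the trace
\[
\sigma_1 := \lambda_+(c_1) + \lambda_-(c_1) = f'(1) - c_1,
\]
which is non-zero precisely by \eqref{H6}. Its sign dictates on which side of $c_1$ the cycle appears: when $f'(1) < c_1$ the flow is area-contracting along the loop and a cycle bifurcates for $c = c_1 + \epsilon$, while for $f'(1) > c_1$ it bifurcates for $c = c_1 - \epsilon$, matching the dichotomy in the statement. The non-degeneracy condition \eqref{H5} enters to guarantee that, as $c$ varies through $c_1$, the stable and unstable separatrices of the saddle split transversely; the constants $I_0, I_1, J, L$ are precisely those governing the leading-order Melnikov splitting function expressed in the arc-length parameterization of the loop, and $I_0 J \neq L I_1$ encodes this transversality.

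Once Andronov--Leontovich is applied, the existence, uniqueness, and smoothness ($\varphi^\epsilon \in C^3$) of the family are immediate. The logarithmic blow-up \eqref{fundphomo} of the fundamental period is a standard passage-time estimate near a hyperbolic saddle: a trajectory entering a small neighborhood of $(1,0)$ at distance $O(\epsilon)$ from the stable manifold spends time $\lambda_+(c_1)^{-1} |\log \epsilon| + O(1)$ inside that neighborhood, while the travel time outside is uniformly $O(1)$. The same linearization at the saddle yields the exponential estimates \eqref{boundw1} and \eqref{boundc1}: on the central stretch of length $T_\epsilon$ the periodic orbit shadows $\varphi^0$ to within $\exp(-\kappa T_\epsilon/2)$ with $\kappa = \min\{\lambda_+(c_1), -\lambda_-(c_1)\}$, and the separatrix splitting that measures $|c_1 - c(\epsilon)|$ is likewise exponentially small in $T_\epsilon$. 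The amplitude bound \eqref{bdO1} follows because each $\varphi^\epsilon$ lies in a fixed tubular neighborhood of the $O(1)$-sized homoclinic loop. The main obstacle I anticipate is extracting the precise form of the Andronov--Leontovich non-degeneracy hypothesis from \eqref{H5} and verifying the arc-length geometric meaning of the constants $J$ and $L$; once this translation is carried out, the remainder is standard homoclinic-loop bifurcation machinery.
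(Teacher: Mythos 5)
Your overall architecture --- Melnikov's method for persistence of the homoclinic loop at $c_1 = I_1/I_0$, Andronov--Leontovich with saddle quantity $\Sigma_0 = f'(1) - c_1 \neq 0$ to generate the periodic family, and a saddle passage-time estimate for $T_\ep = O(|\log\ep|)$ --- matches the paper's route through Theorems \ref{theoexisthomo} and \ref{theoexistlp}, and your reading of the sign dichotomy for $c(\ep)$ from the sign of the saddle quantity is correct.

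However, the key exponential estimates \eqref{boundw1} and \eqref{boundc1} are asserted rather than derived. Your claim that they follow from ``the same linearization at the saddle'' skips the genuinely non-trivial step: one has to show that, \emph{after a suitable reparametrization of $z$}, the periodic orbit and the homoclinic orbit are exponentially close in $C^1$ uniformly over the whole interval $[-T_\ep/2,T_\ep/2]$, and that the parameter mismatch $|c_1-c(\ep)|$ is $O(e^{-\kappa T_\ep})$. The saddle linearization alone gives local contraction/expansion rates $\lambda_\pm(c_1)$ near $(1,0)$, but it does not by itself produce the global, reparametrized shadowing estimate; that requires an approximation theorem for connecting orbits. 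The paper obtains both bounds by invoking Corollary 3.2 of Beyn \cite{Beyn90a}, whose hypothesis is the non-degeneracy of the homoclinic pair in the sense of Definition \ref{defnondeg}, and devotes Appendix \ref{apenon} (Lemmata \ref{lemaux} and \ref{lemnondefgen}) to verifying it: one must show that $\ker\cA$ and $\ker\cA^*$ are each one-dimensional (spanned by $(\psi',\psi'')^\top$ and an exponentially weighted version thereof, respectively) and that the associated Melnikov-type integral $E$ in \eqref{intE} is non-zero. Without this verification and the appeal to Beyn's theorem, \eqref{boundw1} and \eqref{boundc1} --- which are precisely the structural hypotheses \eqref{hypsYZ} fed into Gardner/Yang--Zumbrun in the proof of Theorem \ref{theolargei} --- remain unproved. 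Relatedly, you locate the residual difficulty in ``extracting the Andronov--Leontovich non-degeneracy from \eqref{H5},'' but Andronov--Leontovich only needs $\Sigma_0\neq 0$, which is \eqref{H6}; \eqref{H5} is spent earlier, guaranteeing $M_1(\mu_0)\neq 0$ in the Melnikov argument, and the actual remaining work is the Beyn non-degeneracy verification just described.
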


The following results pertain to the stability properties of both families of periodic waves as solutions to the viscous balance law. The first theorem establishes the spectral instability of the small-amplitude periodic waves. The main idea behind the proof is that, since the waves have small-amplitude, the spectrum of the linearized operator around the wave  (see Definition \ref{defspectstab} below) can be approximated by that of a constant coefficient operator around the zero solution which is, in turn, determined by a dispersion relation curves intersecting the unstable complex half plane.

\begin{theorem}[spectral instability of small-amplitude waves]
\label{theosmalli}
Under conditions \eqref{H1} thru \eqref{H4}, there exists $0 < \bar{\ep}_0 < \ep_0$ such that every small-amplitude periodic wave $\varphi^\ep$ from Theorem \eqref{thmexbded} with $0 < \ep < \bar{\ep}_0$ is spectrally unstable, that is, the spectrum of linearized operator around the wave intersects the unstable half plane $\C_+ = \{ \lambda \in \C \, : \, \Re \lambda > 0\}$.
\end{theorem}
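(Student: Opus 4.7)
The plan is to linearize \eqref{vbl} about $\varphi^\ep$ in the co-moving frame $z = x - c(\ep) t$, producing the $T_\ep$-periodic coefficient operator
\[
\cL^\ep w := w_{zz} + \bigl(c(\ep) - f'(\varphi^\ep)\bigr) w_z + \bigl(g'(\varphi^\ep) - f''(\varphi^\ep) (\varphi^\ep)' \bigr) w.
\]
Since the coefficients are bounded and periodic, the spectrum of $\cL^\ep$ on $L^2(\R)$ admits a Floquet/Bloch decomposition as the union, over Floquet parameters $\theta \in [-\pi, \pi)$, of the discrete spectra of the Bloch operators $\cL^\ep_\theta$ acting on $L^2_\mathrm{per}([0, T_\ep])$, obtained by conjugating $\cL^\ep$ with $e^{i \theta z / T_\ep}$. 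To prove instability it therefore suffices to exhibit a single $\theta$ and a single eigenvalue of $\cL^\ep_\theta$ in $\C_+$.

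Next I would identify the spectrum of the limiting operator. Using the profile bound \eqref{bdsmalla}, namely $\|\varphi^\ep\|_\infty + \|(\varphi^\ep)'\|_\infty = O(\sqrt{\ep})$, together with $c(\ep) = f'(0) + O(\ep)$, all coefficients of $\cL^\ep$ converge uniformly on $\R$ to those of the constant-coefficient limit
\[
\cL^0 := \partial_z^2 + g'(0).
\]
Freezing the period via the affine rescaling $z \mapsto z T_0 / T_\ep$, with $T_0 := 2\pi / \sqrt{g'(0)}$ and $T_\ep / T_0 = 1 + O(\ep)$ by \eqref{fundphopf}, transplants every $\cL^\ep_\theta$ to the fixed space $L^2_\mathrm{per}([0, T_0])$ at the cost of $O(\ep)$ corrections in the second-derivative coefficient. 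On this fixed space the Bloch eigenvalues of $\cL^0$ are
\[
\lambda^0_n(\theta) = g'(0) - \Bigl( \frac{2\pi n + \theta}{T_0} \Bigr)^2, \qquad n \in \Z,
\]
and at $n = 0$, $\theta = 0$ one finds the simple, isolated eigenvalue $\lambda^0_0(0) = g'(0) > 0$ (positive by \eqref{H2}), separated from the next eigenvalues $\lambda^0_{\pm 1}(0) = 0$ by a positive spectral gap.

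The last step would be Kato--Rellich analytic perturbation theory for isolated eigenvalues of finite multiplicity. Uniform convergence of the coefficients yields norm-resolvent convergence of $\cL^\ep_0$ to $\cL^0_0$ on the common domain $H^2_\mathrm{per}([0, T_0])$, so the simple eigenvalue $g'(0)$ persists: for $\ep > 0$ small enough there exists
\[
\lambda^\ep_0(0) = g'(0) + O(\sqrt{\ep}) \, \in \, \ptsp(\cL^\ep_0).
\]
Choosing $0 < \bar\ep_0 < \ep_0$ so that the remainder satisfies $|O(\sqrt\ep)| < \tfrac{1}{2} g'(0)$ for $\ep < \bar\ep_0$ gives $\Re \lambda^\ep_0(0) > \tfrac{1}{2} g'(0) > 0$; hence $\lambda^\ep_0(0) \in \sigma(\cL^\ep) \cap \C_+$, proving spectral instability.

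The main obstacle I anticipate is making the perturbation step fully rigorous in the presence of two compounded small parameters: the profile deviation, controlled only by $\sqrt{\ep}$ through \eqref{bdsmalla}, and the period deviation, of order $\ep$, both of which must be tracked through the conjugation that defines the Bloch operators. The crucial simplification is that the spectral gap between $g'(0)$ and the rest of the unperturbed Bloch spectrum is independent of $\ep$, so a single smallness threshold $\bar\ep_0$ suffices. One must also verify that no other $\lambda^0_n(\theta)$ crosses into $\C_+$ under perturbation for $\theta \neq 0$; this follows from the same Kato--Rellich argument applied uniformly in $\theta$, since every other unperturbed eigenvalue lies in $\{\Re \lambda \leq 0\}$ with $\ep$-independent distance from $\C_+$.
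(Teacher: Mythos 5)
Your proposal is correct and takes essentially the same route as the paper: both pass through the Floquet/Bloch decomposition, identify the constant-coefficient limit $\partial_z^2 + g'(0)\,\Id$, locate the positive simple eigenvalue $g'(0)$ at Bloch parameter $\theta = 0$ on the rescaled fixed-period space, and conclude by perturbing that eigenvalue using the $O(\sqrt{\ep})$ size of the coefficient deviations. The one step the paper pins down more carefully than your shorthand ``uniform convergence of the coefficients yields norm-resolvent convergence'' is the relative $\cL_\theta^0$-boundedness of the first-order perturbation $b_1(y)(i\theta + \pi\partial_y) + b_0(y)\Id$ (the paper's Lemma~\ref{L0bded}), which is exactly the technical prerequisite for applying the Hislop--Sigal/Kato eigenvalue-stability result to an unbounded perturbation; your appeal to Kato--Rellich is valid, but it rests on precisely this check, which you should make explicit rather than inferring it from uniform convergence of the zeroth- and first-order coefficients alone.
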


The second stability result establishes the spectral instability of the family of large-period waves. The main idea behind this behavior is that the family satisfies the assumptions of the classical result by Gardner \cite{Grd2} (see also \cite{SS6,YngZ19}) of convergence of periodic spectra in the infinite-period limit to that of the underlying homoclinic wave, which is spectrally unstable.

\begin{theorem}[spectral instability of large period waves]
\label{theolargei}
Under assumptions \eqref{H1} - \eqref{H3}, \eqref{H5} and \eqref{H6}, there exists $0 < \bar{\ep}_1 < \ep_1$ such that every small-amplitude periodic wave $\varphi^\ep$ from Theorem \eqref{thmexlarge} with $0 < \ep < \bar{\ep}_1$ is spectrally unstable, that is, the spectrum of linearized operator around the wave intersects the unstable half plane $\C_+ = \{ \lambda \in \C \, : \, \Re \lambda > 0\}$.
\end{theorem}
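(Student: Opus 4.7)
The approach follows the strategy indicated after the statement: we apply Gardner's convergence theorem for Floquet spectra in the infinite-period limit \cite{Grd2}, in the refined form of Yang and Zumbrun \cite{YngZ19}, to transfer the spectral instability of the limiting homoclinic wave $\varphi^0$ to the nearby periodic waves $\varphi^\ep$. The substantive content therefore reduces to producing an eigenvalue of the linearized operator about $\varphi^0$ with positive real part.

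Linearizing \eqref{vbl} about $\varphi^\ep$ in the moving coordinate $z = x - c(\ep) t$ yields
\begin{equation*}
\cL^\ep v = v_{zz} + \bigl(c(\ep) - f'(\varphi^\ep)\bigr) v_z + \bigl(g'(\varphi^\ep) - f''(\varphi^\ep)(\varphi^\ep)'\bigr) v.
\end{equation*}
For $\ep = 0$ the coefficients are asymptotically constant as $|z| \to \infty$, and the essential spectrum of $\cL^0$ on $L^2(\R)$ is the image of the symbol $\lambda(\xi) = -\xi^2 + i(c_1 - f'(1))\xi + g'(1)$, a left-opening parabola whose rightmost point is $g'(1) < 0$. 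Hence $\sigma_{\mathrm{ess}}(\cL^0) \cap \C_+ = \emptyset$ and the spectral instability of $\varphi^0$ reduces to producing a point eigenvalue in $\C_+$.

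By translation invariance $\cL^0 (\varphi^0)' = 0$, and $(\varphi^0)' \in L^2(\R)$ since $(1,0)$ is a saddle of the traveling wave ODE (its linearization has real eigenvalues $\mu_\pm$ of opposite sign thanks to $g'(1) < 0$). Because the homoclinic loop has a unique turning point (the extreme value $\varphi = u_*$ of the underlying profile $\psi = \pm\gamma(\varphi)$), $(\varphi^0)'$ changes sign exactly once on $\R$. Conjugating by the exponential weight $e^{A/2}$ with $A'(z) = c_1 - f'(\varphi^0(z))$ recasts $\cL^0 v = \lambda v$ as the self-adjoint Schr\"odinger-type equation
\begin{equation*}
\tilde\cL w = w_{zz} + Q(z)\, w = \lambda w, \qquad Q(z) \to Q_\infty := g'(1) - \tfrac14 (c_1 - f'(1))^2 < 0,
\end{equation*}
and the transformed kernel element $\psi_0 = e^{A/2}(\varphi^0)'$ retains its single sign change and lies in $L^2(\R)$ (the saddle decay rates of $(\varphi^0)'$ dominate $|c_1 - f'(1)|/2$ at each end). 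By Sturm oscillation theory for Schr\"odinger operators on the line with potentials tending to a negative constant, the $n$-th bound state eigenvalue counted from the top has exactly $n-1$ nodal zeros; so $\lambda = 0$ sits at the second position from the top, and there exists a principal eigenvalue $\lambda^* > 0$ with positive eigenfunction $w^*$. A short weight-decay check (the rate $\sqrt{\lambda^* - Q_\infty}$ exceeds $|c_1 - f'(1)|/2$ because $\lambda^* > 0 > g'(1)$) shows $v^* = e^{-A/2} w^* \in L^2(\R)$, so $\lambda^* \in \sigma_{\mathrm{pt}}(\cL^0) \cap \C_+$ and the pulse $\varphi^0$ is spectrally unstable.

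It then remains to verify the hypotheses of \cite[Thm.~1.1]{YngZ19}. The exponential localization \eqref{boundw1}--\eqref{boundc1} supplied by Theorem \ref{thmexlarge} furnishes the required homoclinic-trapping input; assumption \eqref{H6} together with $g'(1) < 0$ delivers the saddle/hyperbolicity structure needed to build uniform exponential dichotomies for the first-order system associated with $\cL^\ep - \lambda$ on a complex neighborhood of $\lambda^*$ (which lies in the resolvent set of the essential spectrum), uniformly in small $\ep$; and the sign-definiteness of $w^*$ makes $\lambda^*$ a simple zero of the homoclinic Evans function. The theorem then produces $\delta \in (0, \Re\lambda^*)$ and $\bar{\ep}_1 \in (0, \ep_1)$ such that, for every $\ep \in (0, \bar{\ep}_1)$, the Floquet spectrum of $\cL^\ep$ meets the disk $\{|\lambda - \lambda^*| < \delta\} \subset \C_+$, yielding the claimed spectral instability. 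The main difficulty is the Sturm step: one must rigorously pin down uniqueness of the nodal zero of $(\varphi^0)'$ from the saddle-loop geometry and carry out the weight/decay arithmetic so that the Sturm bound state descends to an honest $L^2(\R)$ eigenvalue of the \emph{original} operator $\cL^0$, rather than only of its self-adjoint conjugate.
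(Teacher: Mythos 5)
Your proposal is correct and follows essentially the same route as the paper: establish instability of the limiting pulse via Sturm--Liouville theory applied to the (exponentially conjugated) linearized operator, using translation invariance to place $\lambda=0$ as the eigenvalue whose eigenfunction $(\varphi^0)'$ has exactly one zero so that there is a strictly positive simple ground-state eigenvalue, and then invoke the Gardner/Yang--Zumbrun infinite-period convergence theorem after verifying the exponential-closeness hypotheses \eqref{boundw1}--\eqref{boundc1}. The only cosmetic difference is that you carry out the self-adjoint conjugation and oscillation argument by hand, whereas the paper cites the packaged Sturmian-operator result (Theorem 2.3.3 in Kapitula--Promislow) which encapsulates exactly those steps.
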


\subsection*{Plan of the paper}

Section \S \ref{secexi} is devoted to proving the existence of the two types of bounded periodic waves for this large class of model equations: those with small amplitude, and those with large period associated to a homoclinic orbit (in particular, Appendix \S \ref{apenon} contains the verification of a non-degeneracy property of the latter). Section \S\ref{secspe} sets up the stability problem and defines spectral stability in terms of the Floquet spectrum. Section \S\ref{secsmall} is devoted to prove the spectral instability of small-amplitude periodic waves, whereas section \S\ref{seclarge} contains the proof of instability of the large period waves. A few model examples satisfying the assumptions of this paper can be found in section \S\ref{secexa}. Finally, we make some concluding remarks in section \S\ref{secdisc}.

\subsection*{On notation}
Linear operators acting on infinite-dimensional spaces are indicated with calligraphic letters (e.g., $\cL$ and $\cT$), except for the identity operator which is indicated by $\Id$. The domain of a linear operator, $\cL : X \to Y$, with $X$, $Y$ Banach spaces, is denoted as $\cD(\cL) \subseteq X$. We denote the real and imaginary parts of a complex number $\lambda \in \C$ by $\Re\lambda$ and $\Im\lambda$, respectively, as well as complex conjugation by ${\lambda}^*$. Complex transposition of matrices are indicated by the symbol $A^*$, whereas simple transposition is denoted by the symbol $A^\top$. For any linear operator $\mathcal{L}$, its formal adjoint is denoted by $\mathcal{L}^*$. Standard Sobolev spaces of complex-valued functions on the real line will be denoted as $L^2(\R;\C)$ and $H^m(\R;\C)$, with $m \in \N$, endowed with the standard inner products,
\[
\langle u,v \rangle_{L^2} = \int_\R u(x) v(x)^* \, dx, \qquad \langle u,v \rangle_{H^m} = \sum_{k=1}^m \langle \partial_x^k u, \partial_x^k v \rangle_{L^2},
\]
and corresponding norms $\| u\|_{L^2}^2 = \< u,u \>_{L^2}$, $\| u\|_{H^m}^2 = \< u,u \>_{H^m}$. For any $T > 0$, we denote by $\Ldper([0,T];\C)$ the Hilbert space of complex $T$-periodic functions in $L^2_\mathrm{\tiny{loc}}(\R)$ satisfying
\[
u(x + T) = u(x), \qquad \text{a.e. in } \; x,
\]
and with inner product and norm
\[
\< u, v \>_{\Ldper} = \int_0^T u(x) v(x)^* \, dx, \qquad \| u \|^2_{\Ldper} = \< u, u \>_{\Ldper}.
\]
For any $m \in \N$, the periodic Sobolev space $\Hmper([0,T],\C)$ will denote the set of all functions $u \in \Ldper([0,T];\C)$ with all weak derivatives up to order $m$ in $\Ldper([0,T];\C)$. By Sobolev's lemma (see, e.g., Iorio and Iorio \cite{IoIo01}), $\Hmper \hookrightarrow C^k_\mathrm{\tiny{per}}$ for $m > k + \tfrac{1}{2}$, $k \in \N$, and we can characterize the spaces $\Hmper$ as
\[
\Hmper([0,T],\C) = \{ u \in H^m([0,T];\C) \, : \, \partial_x^j u (0) = \partial_x^j u(T), \; j=0,1,\ldots, m-1\}.
\]
Their inner product and norm are given by
\[
\< u,v \>_{\Hmper} = \sum_{j=0}^m \< \partial_x^j u, \partial_x^j v\>_{\Ldper}, \quad \|u\|_{\Hmper}^2 = \< u,u\>_{\Hmper}.
\]
We use the standard notation in asymptotic analysis (cf. \cite{Erde56,Mill06}), in which the symbol ``$\sim$" means  ``behaves asymptotically like" as $x \to x_*$; more precisely, $f \sim g$ as $x \to x_*$ if $f - g = o(|g|)$ as $x \to x_*$ (or equivalently, $f/g \to 1$ as $x \to x_*$ if both functions are positive).


\section{Existence of bounded periodic wavetrains}
\label{secexi}

This section contains the proofs of Theorems \ref{thmexbded} and \ref{thmexlarge}. We distinguish between two types of bounded periodic waves: those with small-amplitude and bounded period that emerge from a Hopf bifurcation from the origin, and those with finite amplitude and large fundamental period that bifurcate from a homoclinic loop in the neighborhood of a critical velocity. In both cases the speed $c \in \R$ plays the role of the bifurcation parameter.

Consider a traveling wave solution to equations \eqref{vbl} having the form 
\begin{equation}
\label{tws}
u(x,t) = \varphi(x-ct), 
\end{equation}
where $\varphi : \R \to \R$ is the profile function of the wave and $c \in \R$ is the speed of propagation. Let us denote the Galilean variable of translation as $z = x - ct$. A bounded spatially periodic traveling wave is a solution of the form \eqref{tws} for which the profile function is a periodic function of its argument with fundamental period $T > 0$, satisfying
\[
\varphi(z + T) = \varphi(z), \qquad \text{for all} \;\; z \in \R,
\]
and
\[
|\varphi(z)|, |\varphi'(z)| \leq C, \qquad \text{for all} \;\; z \in \R, \;\; \text{some } C>0.
\]

Substitution of \eqref{tws} into \eqref{vbl} yields the following ODE for the profile function,
\begin{equation}
\label{profileq}
-c \varphi' + f'(\varphi)\varphi' = \varphi'' + g(\varphi).
\end{equation}

In order to analyze the existence of periodic solutions to \eqref{profileq} let us denote $U := \varphi(z)$, $V := \varphi'(z)$, $' = d/dz$ and write \eqref{profileq} as a the first order system in the plane
\begin{equation}
\label{firstos}
\begin{aligned}
U' &= F(U,V,c) := V,&\\
V' &= G(U,V,c) := -cV + f'(U) V - g(U).&
\end{aligned}
\end{equation}

Notice that, from assumptions \eqref{H1} and \eqref{H2} we have $F, G \in C^3(\R^3)$ and that for each parameter value $c\in \R$ system \eqref{firstos} has two equilibria, $P_0 = (0,0)$ and $P_1 = 
(1,0)$, in the $(U,V)$-phase plane. Let us denote the Jacobian with respect to $(U,V)$ of the right hand side of \eqref{firstos} as
\[
A(U,V) := 
\begin{pmatrix} F_U & F_V \\ G_U & G_V \end{pmatrix} 
= \begin{pmatrix} 0 & 1 \\ f''(U) V - g'(U) & -c + f'(U) \end{pmatrix}.
\]
Let $A_0 = A(0,0)$ and $A_1 = A(1,0)$ denote the linearizations of \eqref{firstos} evaluated at the two equilibria, $P_0$ and $P_1$, respectively, so that
\[
 A_0 = \begin{pmatrix}
           0 & 1 \\ -g'(0) & -c + f'(0)
          \end{pmatrix}, \quad \text{and} \quad A_1 = \begin{pmatrix}
           0 & 1 \\ -g'(1) & -c + f'(1)
          \end{pmatrix}.
\]

Note that the eigenvalues of $A_1$ are
\begin{equation}
\label{evaluesA1}
\lambda_1^\pm(c) = \frac{1}{2}\big(f'(1) - c\big) \pm \frac{1}{2} \sqrt{(f'(1)-c)^2 - 4g'(1)},
\end{equation}
and therefore, in view of \eqref{H2}, $g'(1) < 0$ and the equilibrium point $P_1 = (1,0)$ is a hyperbolic saddle for system \eqref{firstos} for each value of $c \in \R$. The eigenvalues of $A_0$ are
\begin{equation}
\label{evaluesA0}
\lambda_0^\pm(c) = \frac{1}{2}\big(f'(0) - c\big) \pm \frac{1}{2} \Big( (f'(0)-c)^2 - 4g'(0) \Big)^{1/2},
\end{equation}
and hence the origin $P_0 = (0,0)$ is 
a node, a focus or a center, depending on the value of $c \in \R$. In the sequel we shall vary $c$ as a bifurcation parameter to establish the conditions under which periodic orbits for \eqref{firstos} do emerge.

\subsection{Small-amplitude periodic waves}

The existence of small-amplitude periodic traveling waves for viscous balance laws of the form \eqref{vbl} is a direct consequence of Andronov-Hopf's bifurcation theorem. Such periodic orbits bifurcate from a local change 
of stability of the origin when the speed $c$ crosses a critical value $c_0$.

\begin{theorem}[Andronov-Hopf]
\label{teoHopf}
Consider the planar system
\begin{equation}
\label{Hopfsyst}
\left\{ \begin{array}{ll}
U' = F(U,V,\mu)\\
V' = G(U,V,\mu)\end{array} \right.  
\end{equation}

\noindent where $F$ and $G$ are functions of class $C^3$ and $\mu \in \R$ is a bifurcation parameter. Suppose $(U,V) = (U_0,V_0)$ is an equilibrium point of system \eqref{Hopfsyst}, which may depend on $\mu$. Let the eigenvalues  of the linearized system around $(U_0,V_0)$ be given by 
\[
 \lambda^\pm (\mu) = \alpha(\mu) \pm i \beta(\mu).
\]
Let us assume that for a certain value $\mu = \mu_0$ the following conditions are satisfied:
\begin{itemize}
 \item[(a)] \emph{(non-hyperbolicity condition)} $\alpha(\mu_0) = 0$, $\beta(\mu_0) = \omega_0 \neq 0$, and 
 \begin{equation}
  \label{signoeq}
  \mathrm{sgn} (\omega_0) = \mathrm{sgn} ((\partial G/\partial U)(U_0,V_0,\mu_0) ).
 \end{equation}
 \item[(b)] \emph{(transversality condition)}
 \[
  \frac{d \alpha}{d \mu}(\mu_0) = d_0 \neq 0.
 \]
\item[(c)] \emph{(genericity condition)} $a_0 \neq 0$, where $a_0$ is the \emph{first Lyapunov exponent},
\begin{equation}
\label{formlyapexp}
\begin{aligned}
 a_0 &:= \frac{1}{16} \big( F_{UUU} + F_{UVV} + G_{UUV} + G_{VVV}\big) +\\ &\; \; + \frac{1}{16 \omega_0} 
\big(F_{UV} ( 
F_{UU} + F_{VV}) - G_{UV}(G_{UU} + G_{VV}) - F_{UU}G_{UU} + F_{VV}G_{VV}\big),
\end{aligned}
\end{equation}
where the partial derivatives of $F$ and $G$ are evaluated at $(U_0,V_0,\mu_0)$.
\end{itemize}
Then there exists $\epsilon > 0$ such that a unique curve of closed periodic orbit solutions bifurcates from 
the equilibrium point into the region 
$\mu \in (\mu_0, \mu_0 + \epsilon)$ if $a_0 d_0 < 0$, or into the region $\mu \in (\mu_0-\epsilon,\mu_0)$ if 
$a_0 d_0 > 0$. The fixed point is stable 
for $\mu > \mu_0$ (respectively, $\mu < \mu_0$) if $d_0 < 0$ (respectively, $d_0 > 0$). Consequently, the 
periodic orbits are unstable (respectively, stable) if the equilibrium point is stable (respectively, 
unstable) on the region where the periodic orbits exist. Moreover, the amplitude of the periodic orbits grows 
like $\sqrt{|\mu-\mu_0|}$ and their fundamental periods behave like
\[
 T(\mu) = \frac{2 \pi}{|\omega_0|} + O(|\mu - \mu_0|),
\]
as $\mu \to \mu_0$. The bifurcation is called \emph{supercritical} if the bifurcating periodic orbits 
are stable and \emph{subcritical} if they are unstable.
\end{theorem}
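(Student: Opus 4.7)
The plan is to invoke the classical Poincar\'e-Andronov-Hopf normal form strategy. First, I would translate so that the equilibrium sits at the origin and $\mu_0 = 0$, and apply a smooth $\mu$-dependent linear change of coordinates bringing the Jacobian at the equilibrium into the canonical block
$\bigl(\begin{smallmatrix}\alpha(\mu) & -\beta(\mu)\\ \beta(\mu) & \alpha(\mu)\end{smallmatrix}\bigr)$. The sign condition \eqref{signoeq} fixes the orientation of this block consistently with the structural constraint (in our applications, $F = V$) of the original system. Introducing a complex coordinate $z = U + iV$ in the new frame, the planar system rewrites as a single complex ODE
$$\dot{z} \;=\; (\alpha(\mu) + i\beta(\mu))\, z \;+\; \sum_{k+l \geq 2} g_{kl}(\mu)\, z^{k}\bar{z}^{l}.$$

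Next I would apply Poincar\'e's method of near-identity polynomial transformations $z = w + P_{m}(w,\bar{w})$ at orders $m = 2,3$ to eliminate all non-resonant monomials. At $\mu = 0$ the only resonant cubic monomial is $w^{2}\bar{w} = w|w|^{2}$, so the normalization yields
$$\dot{w} \;=\; (\alpha(\mu) + i\beta(\mu))\, w \;+\; c_{1}(\mu)\, w|w|^{2} \;+\; O(|w|^{5}),$$
and one identifies $\Re c_{1}(0) = \omega_{0} a_{0}$ (up to the usual convention), with $a_{0}$ given by the explicit cubic combination \eqref{formlyapexp}. In polar coordinates $w = r e^{i\theta}$ the truncated system decouples into
$\dot{r} = \alpha(\mu)\, r + \Re c_{1}(\mu)\, r^{3} + O(r^{5})$ and $\dot{\theta} = \beta(\mu) + O(r^{2})$, and the radial equation alone detects the bifurcation: its unique positive root $r_{*}(\mu)^{2} = -\alpha(\mu)/\Re c_{1}(\mu)$ exists precisely on the side of $\mu = \mu_{0}$ on which $\alpha(\mu)$ and $\Re c_{1}(0)$ carry opposite signs. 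Since $\alpha(\mu) = d_{0}(\mu - \mu_{0}) + O((\mu-\mu_{0})^{2})$ and $\sgn \Re c_{1}(0) = \sgn a_{0}$, that side is precisely the one determined by $\sgn(a_{0}d_{0})$, and the amplitude and period asymptotics $r_{*} \sim \sqrt{|\mu - \mu_{0}|}$, $T(\mu) = 2\pi/|\omega_{0}| + O(|\mu-\mu_{0}|)$ follow at once.

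To promote this truncated analysis to a genuine periodic orbit of the full $C^{3}$ system I would apply the implicit function theorem to the first-return Poincar\'e map on a short transverse section through the origin. The $r^{3}$ coefficient in $\dot r$ makes the displacement function of the truncated return map transverse in $r$ at $r = r_{*}$, so the $O(|w|^{5})$ remainder perturbs $r_{*}$ to a unique nearby $C^{1}$ branch of genuine periodic orbits with the same leading-order amplitude and period. Stability of the equilibrium then follows from $\sgn \alpha(\mu) = \sgn(d_{0}(\mu-\mu_{0}))$, and stability of the periodic orbit is read off from $\partial_{r}\dot{r}$ at $r_{*}$, whose sign is opposite to $\Re c_{1}$ and hence opposite to $a_{0}$. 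The principal obstacle I anticipate is the purely mechanical but sign-sensitive bookkeeping through the quadratic and cubic near-identity transformations required to match $\Re c_{1}(0)$ with the explicit expression \eqref{formlyapexp}; this is a standard computation done, for instance, in Guckenheimer-Holmes or Kuznetsov, which in a paper of this kind one would typically quote rather than redo.
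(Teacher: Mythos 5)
The paper does not prove this theorem at all: the remark that immediately follows it in the text explains that this is the classical Andronov--Hopf result, first proved by Andronov \cite{Andro29} and Hopf \cite{Hop42}, and the authors cite \cite{GuHo83,HaKo91,Kuz982e} for the statement and the Lyapunov coefficient formula, and \cite{MaMcC76} for the period expansion. The only reason the theorem is restated in the paper is to make the sign condition \eqref{signoeq} explicit, since it is ordinarily left implicit when the system is assumed to already be in normal form. Your sketch is the standard Poincar\'e normal form argument that appears in those references, so you are proving a theorem the paper merely quotes; your own final sentence recognises that this is a computation one would normally cite rather than redo, which is precisely what the paper does.

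The one point you treat too lightly is the very thing that motivated the authors to write the statement out: the role of condition \eqref{signoeq}. You describe it as fixing ``the orientation of this block consistently with the structural constraint $F=V$,'' but the constraint $F=V$ is specific to the application later in the paper, not part of the general theorem, and the real issue is that the complex eigenvalues $\alpha\pm i\beta$ do not distinguish a sign for $\omega_0=\beta(\mu_0)$ until you choose which eigenvector goes to which coordinate in the block $\bigl(\begin{smallmatrix}\alpha & -\beta\\ \beta & \alpha\end{smallmatrix}\bigr)$. Since $\omega_0$ appears in the denominator of the first Lyapunov coefficient \eqref{formlyapexp}, this choice changes the sign of the quadratic contribution to $a_0$ and hence potentially the announced direction of the bifurcation. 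Condition \eqref{signoeq} pins the sign of $\omega_0$ to that of $G_U$ at the equilibrium, which is what makes the conclusion ``bifurcation to the right if $a_0 d_0<0$, to the left if $a_0 d_0>0$'' unambiguous for a system written in arbitrary (not normal-form) coordinates. Also, your intermediate claim $\Re c_1(0)=\omega_0 a_0$ is not the standard normalisation; in the convention giving \eqref{formlyapexp} one has $a_0=\Re c_1(0)$ (see Guckenheimer--Holmes, eq. (3.4.11)), though since you hedge with ``up to the usual convention'' this is a cosmetic rather than substantive slip.
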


\begin{remark}
Theorem \ref{teoHopf} is the classical result first proved by Andronov 
\cite{Andro29} in the plane and extended to arbitrary finite dimensions by Hopf \cite{Hop42}. The reason to include its precise 
statement here is that most of its versions in the standard literature (see, for example, 
\cite{GuHo83,HaKo91,Kuz982e}) are expressed in terms of the normal form of a generic system \eqref{Hopfsyst}, for which
the sign condition \eqref{signoeq} is usually implicitly assumed. But for a system not necessarily written in normal form, the sign condition has 
to be verified in order to determine on which side of the bifurcation parameter value periodic orbits do
emerge. 
The formula for the first Lyapunov exponent \eqref{formlyapexp} is well-known and can be found in \cite{GuHo83}, p. 152 (see also \cite{HaKo91}). The expression for the period can be found in the version of the same theorem by Marsden and McCracken \cite{MaMcC76} (see Theorem 3.1, p. 65).
\end{remark}

\begin{remark}
\label{remstability}
The notion of a stable periodic orbit in the statement of Andronov-Hopf's theorem refers to the standard concept from dynamical systems theory: the orbit is stable as a solution to system \eqref{firstos} for a specific (and constant) value of $c$ if any other nearby solution (to the system with the same $c$) tends to the orbit under consideration. This notion is completely unrelated to the concept of \emph{spectrally stable periodic wave} from Definition \ref{defspectstab} which is motivated from the dynamical stability of the traveling wave solution as a solution to the evolution PDE. For example, the small amplitude waves for the modified generalized Burgers-Fisher equation (see equation \eqref{mBF} in section \S \ref{secmBF} below) are stable as orbit solutions to the first order system \eqref{firstos}, but spectrally unstable as solutions to the viscous balance law according to Theorem \ref{theosmalli}.
\end{remark}

\begin{theorem}[existence of small-amplitude periodic orbits]
\label{theosmallw}
Under the assumptions \eqref{H1}, \eqref{H2} and \eqref{H4}, there exist, a critical speed $c_0 = f'(0) \in \R$ and $\epsilon_0 > 0$ sufficiently small, such that 
a unique family of closed periodic orbit solutions $(\overline{U},\overline{V})(z)$ for system \eqref{firstos} bifurcates from 
the origin $P_0 = (0,0)$. The family is parametrized by speed values
$c \in (c_0, c_0 + \epsilon_0)$ if $\overline{a}_0 > 0$, or by $c \in (c_0-\epsilon_0,c_0)$ if 
$\overline{a}_0 < 0$. Moreover, the amplitude of the periodic orbits and their fundamental periods behave like
\[
|\overline{U}|, |\overline{V}| = O(\sqrt{|c-c_0|}),
\]
and
\[
 T(c) = \frac{2 \pi}{\sqrt{g'(0)}} + O(|c - c_0|),
\]
respectively, as $c \to c_0$.
\end{theorem}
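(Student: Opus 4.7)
The plan is to apply the Andronov-Hopf Theorem \ref{teoHopf} directly to the planar system \eqref{firstos}, with the wave speed $c$ playing the role of the bifurcation parameter $\mu$ and the equilibrium $P_0 = (0,0)$ that of the fixed point. The first task is to locate the candidate bifurcation value: from the explicit eigenvalue formula \eqref{evaluesA0} together with $g'(0) > 0$ from \eqref{H2}, the real part $\alpha(c) = \tfrac{1}{2}(f'(0) - c)$ vanishes precisely at $c_0 := f'(0)$, where the eigenvalues become purely imaginary with $|\beta(c_0)| = \sqrt{g'(0)}$. Transversality is immediate since $\alpha'(c_0) = -\tfrac{1}{2} =: d_0 \neq 0$, and the sign condition \eqref{signoeq} forces the choice $\omega_0 = -\sqrt{g'(0)}$ because $G_U(P_0, c_0) = -g'(0) < 0$.

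The only substantive step is the computation of the first Lyapunov exponent \eqref{formlyapexp}. Since $F(U,V,c) = V$ is linear, every partial derivative of $F$ of order $\geq 2$ vanishes, and only derivatives of $G(U,V,c) = -cV + f'(U)V - g(U)$ contribute. A direct differentiation at $(0,0,c_0)$ yields the surviving values $G_{UU} = -g''(0)$, $G_{UV} = f''(0)$, $G_{UUV} = f'''(0)$, with $G_{VV} = G_{VVV} = 0$. Substituting these into \eqref{formlyapexp} and using $\omega_0 = -\sqrt{g'(0)}$ produces
\begin{equation*}
a_0 \;=\; \frac{1}{16}\left( f'''(0) - \frac{f''(0)\, g''(0)}{\sqrt{g'(0)}} \right) \;=\; \frac{\overline{a}_0}{16},
\end{equation*}
so the genericity assumption \eqref{H4} is exactly the condition $a_0 \neq 0$ demanded by Theorem \ref{teoHopf}.

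With all three hypotheses verified, Theorem \ref{teoHopf} delivers a unique one-parameter family of closed periodic orbits bifurcating from $P_0$. Since $d_0 < 0$, the product $a_0 d_0$ has the opposite sign to $\overline{a}_0$, so the orbits emerge on $c \in (c_0, c_0 + \epsilon_0)$ when $\overline{a}_0 > 0$ and on $c \in (c_0 - \epsilon_0, c_0)$ when $\overline{a}_0 < 0$, as stated. The amplitude estimate $|\overline{U}|, |\overline{V}| = O(\sqrt{|c - c_0|})$ and the period expansion with leading term $2\pi/|\omega_0| = 2\pi/\sqrt{g'(0)}$ then follow verbatim from the Hopf theorem. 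The only delicate point in the whole argument is the sign bookkeeping for $\omega_0$ imposed by \eqref{signoeq}, which must be propagated carefully into \eqref{formlyapexp} in order to recognize the exponent $\overline{a}_0$ from \eqref{H4} with the correct sign; beyond that, the proof reduces to symbolic differentiation.
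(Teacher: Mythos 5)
Your proposal is correct and follows essentially the same route as the paper: you verify the three hypotheses of Theorem \ref{teoHopf} at $P_0$ with $c$ as bifurcation parameter, including the sign bookkeeping for $\omega_0$ via \eqref{signoeq}, and the Lyapunov-exponent computation reducing $a_0$ to $\overline{a}_0/16$. The conclusions about the side of the bifurcation, the amplitude scaling, and the period expansion are then read off from the Hopf theorem exactly as in the paper.
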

\begin{proof}
It is a direct consequence of Andronov-Hopf's bifurcation theorem upon verification of conditions (a) thru (c). Let us first write the eigenvalues \eqref{evaluesA0} of $A_0$ as
\[
\lambda_0^\pm = \alpha(c) \mp i \beta(c),
\]
where 
\[
\alpha(c) := \frac{1}{2}(f'(0) - c), \qquad \beta(c) := - \frac{1}{2} \sqrt{4g'(0) - (f'(0) -c)^2},
\]
defined for $c \sim f'(0)$. Note that, under hypothesis \eqref{H2}, $\beta(c) \in \R$ for $c \sim f'(0)$. Hence we have a bifurcation value for the speed given by $c_0 = f'(0)$ for which $\alpha(c_0) = 0$ and the origin is a center for system \eqref{firstos} with eigenvalues
\[
\lambda_0^+(c_0) = - i \sqrt{g'(0)}, \qquad \lambda_0^-(c_0) =  i \sqrt{g'(0)}.
\]
Notice that $\omega_0 := \beta(c_0) = - \sqrt{g'(0)} \neq 0$ and, since $G_U = f''(U)V - g'(U)$, we obtain 
\[
(G_U)|_{(0,0,c_0)} = - g'(0),
\]
yielding $\sgn(\omega_0) = \sgn ((G_U)|_{(0,0,c_0)}) = -1$, that is, the non-hyperbolicity condition (a). Likewise, the transversality condition (b) is satisfied inasmuch as
\[
\frac{d\alpha}{dc}(c_0) = - \frac{1}{2} =: d_0 \neq 0.
\]
Finally, to compute the first Lyapunov exponent, notice that $F(U,V,c) = V$ and hence all second derivatives of $F$ are zero. The Lyapunov exponent \eqref{formlyapexp} then reduces to
\[
a_0 = \frac{1}{16} \big( G_{UUV} + G_{VVV} \big)|_{(0,0,c_0)} - \frac{1}{16 \omega_0} \big( G_{UV}(G_{UU} + G_{VV}) \big)|_{(0,0,c_0)}.
\]
Upon calculation and evaluation of the derivatives,
\[
\begin{aligned}
G_{UV}|_{(0,0,c_0)} = f''(0), \quad &G_{UU}|_{(0,0,c_0)} = - g''(0), &G_{VV}|_{(0,0,c_0)} = 0, \\&G_{UUV}|_{(0,0,c_0)} = f'''(0), &G_{VVV}|_{(0,0,c_0)} = 0 &,
\end{aligned}
\]
we arrive at
\[
a_0 = \frac{1}{16} \Big( f'''(0) - \frac{f''(0)g''(0)}{\sqrt{g'(0)}} \Big) = \frac{\overline{a}_0}{16} \neq 0,
\]
in view of \eqref{H4}. This verifies the genericity condition (c). Since $d_0 < 0$ and $\sgn(a_0) = \sgn(\overline{a}_0)$ we obtain the result. 
\end{proof}

\begin{proof}[Proof of Theorem \ref{thmexbded}]
In view of Theorem \ref{theosmallw}, there exists a family of small amplitude periodic orbits parametrized by $\ep := |c-c_0|$ such that, for all $0 < \ep < \ep_0$ there exists a unique periodic orbit, which we denote as $(\bU^\ep,\bV^\ep) (z) =: (\varphi^\ep, (\varphi^\ep)')(z)$, $z \in \R$, solution to system \eqref{firstos} with speed $c(\ep) = c_0 - \ep$ if $\overline{a}_0 < 0$ or $c(\ep) = c_0 + \ep$ if $\overline{a}_0 > 0$, with fundamental period
\[
T_\ep = T_0 + O(\ep) = \frac{2\pi}{\sqrt{g'(0)}} + O(\ep),
\]
and such that $(\varphi^\ep, (\varphi^\ep)') \to P_0 = (0,0)$ as $\ep \to 0^+$ with amplitudes
\[
|\varphi^\ep(z)|, |(\varphi^\ep)'(z)| \leq C \sqrt{\ep},
\]
for some uniform constant $C > 0$.

Each of these orbits is associated to a periodic traveling wave solution to the viscous balance law \eqref{vbl} of the form $u^\ep(x,t) = \varphi^\ep( x - c(\ep) t)$, traveling with speed $c(\ep) = c_0 \pm \ep \gtrless c_0$, depending on the sign of $\overline{a}_0$. Moreover, from standard ODE theory and from the regularity assumptions on $f$ and $g$, it can be easily verified that the orbit is a $C^3$ function of $z \in \R$ and of the bifurcation parameter $c$. The theorem is proved.
\end{proof}

\subsection{Existence of an homoclinic loop: the traveling pulse}

Prior to the existence analysis for large-period traveling waves, we first need to establish the existence of a homoclinic orbit for system \eqref{firstos}. For that purpose we apply Melnikov's integral method. Let us consider the auxiliary system
\begin{equation}
\label{unpert}
\begin{aligned}
U' &= V,&\\
V' &=  -cV + af'(U) V - g(U),
\end{aligned}
\end{equation}
where $a \in \R$ is an auxiliary parameter, and write it as a \emph{near-Hamiltonian system} of the form (cf. \cite{HaYu12}),
\begin{equation}
\label{unpertH}
\begin{aligned}
U' &= \partial_V H + \vep R(U,V,\vep,\mu),\\
V' &=  - \partial_U H + \vep Q(U,V,\vep,\mu),
\end{aligned}
\end{equation}
where
\begin{equation}
\label{Hamil}
H(U,V) := \frac{1}{2} V^2 + \int_0^U g(s) \, ds,
\end{equation}
is the Hamiltonian, and
\[
\begin{aligned}
R(U,V,\vep,\mu) &\equiv 0,\\
Q(U,V,\vep,\mu) &:= \mu_1 f'(U)V - \mu_2 V,\\
\mu = (\mu_1,\mu_2) \in \R^2, & \quad a =: \vep \mu_1, \; c =: \vep \mu_2.
\end{aligned}
\]

Here $\mu = (\mu_1,\mu_2) \in \R^2$ is a vector parameter and $0 < \vep \ll 1$ is small. The associated Hamiltonian (unperturbed) system reads
\begin{equation}
\label{sistH}
\begin{aligned}
U' &= \partial_V H = V, \\
V' &=  - \partial_U H = - g(U).
\end{aligned}
\end{equation}

\begin{remark}
\label{remsamesad}
First, it is to be observed that $P_0 = (0,0)$ and $P_1 = (1,0)$ are equilibrium points for both the Hamiltonian system \eqref{sistH} and the perturbed system \eqref{unpertH}. If we linearize system \eqref{sistH} around the origin, the corresponding Jacobian reads
\[
\widetilde{A}(0,0) = \begin{pmatrix} 0 & - g'(0) \\ 1 & 0\end{pmatrix},
\]
with eigenvalues $\lambda = \pm i \sqrt{g'(0)}$ and henceforth $P_0 = (0,0)$ is a center for system \eqref{sistH}. Likewise, the linearization around $P_1 =(1,0)$ yields
\[
\widetilde{A}(0,0) = \begin{pmatrix} 0 & - g'(1) \\ 1 & 0\end{pmatrix},
\]
with eigenvalues $\lambda = \pm \sqrt{-g'(1)} \in \R$, and hence $P_1 = (1,0)$ is a hyperbolic saddle for the Hamiltonian system \eqref{sistH}.  The stable and unstable eigendirections at $P_1$ are given by
\[
r^- = \begin{pmatrix} - \sqrt{-g'(1)} \\ 1 \end{pmatrix} \quad \text{and} \quad r^+ = \begin{pmatrix} \sqrt{-g'(1)} \\ 1\end{pmatrix},
\]
respectively. 

On the other hand, notice that $P_1 = (1,0)$ is also a hyperbolic saddle for the perturbed system \eqref{unpertH} for any parameter values $a$ and $c$ (equivalently, for any $\vep$, $\mu_1$ and $\mu_2$). Indeed, the linearization of \eqref{unpertH} around $P_1 = (1,0)$ is
\[
\widetilde{A}^\vep(1,0) = \begin{pmatrix} 0 & - g'(1) \\ 1 & a f'(1) -c\end{pmatrix},
\]
having eigenvalues
\[
\lambda^{\vep}_\pm = \frac{1}{2} \left( a f'(1) -c \pm \sqrt{(a f'(1) -c)^2 - 4 g'(1)} \right),
\]
and in view of \eqref{H2}, we have $\lambda^{\vep}_- < 0 < \lambda^{\vep}_+$ for all values of $a$ and $c$, yielding a hyperbolic saddle, independently of the parameter values. In the same fashion, if we linearize \eqref{unpertH} around $P_0 = (0,0)$ the resulting Jacobian is
\[
\widetilde{A}^\vep(0,0) = \begin{pmatrix} 0 & - g'(0) \\ 1 & a f'(0) -c\end{pmatrix},
\]
with associated eigenvalues
\[
\lambda^{\vep}_\pm = \frac{1}{2} \left( a f'(0) -c \pm \sqrt{(a f'(0) -c)^2 - 4 g'(0)} \right).
\]
Thus, $P_0 = (0,0)$ is a center whenever $a = 1$.
\end{remark}

\smallskip

The energy levels at $P_0 = (0,0)$ and $P_1 = (1,0)$ as equilibria of the Hamiltonian system \eqref{sistH} are 
\begin{equation}
\label{defbeta}
\beta := H(1,0) = \int_0^1 g(s) \, ds > 0,
\end{equation}
and $H(0,0) = 0$, respectively. Now let us make the following observations about the Hamiltonian system \eqref{sistH}:

\smallskip

\noindent (a) First, notice that the set
\begin{equation}
\label{defGammab}
\Gamma^{\beta} := \{ (U,V) \in \R^2 \, : \, H(U,V) = \beta \},
\end{equation}
is a homoclinic loop for the Hamiltonian system \eqref{sistH} joining the hyperbolic saddle $P_1 = (1,0)$ with itself. The homoclinic orbit is given explicitly by the graph
\begin{equation}
\label{homoloop}
V(U) = \pm \bV^{\beta}(U) := \pm \sqrt{2 \Big( \beta - \int_0^U g(s) \, ds \Big) } = \pm \gamma(U), \quad U \in (u_*,1),
\end{equation}
where the function $\gamma = \gamma(\cdot)$ is defined in \eqref{defPsi}. See Figure \ref{FigOmegashi}.

\smallskip

\noindent (b) There exists a family of periodic orbits for system \eqref{sistH},
\begin{equation}
\label{defGammah}
\Gamma^h := \{ (U,V) \in \R^2 \, : \, H(U,V) = h \}, \qquad h \in (0,\beta),
\end{equation}
such that
\begin{itemize}
\item[(i)] $\Gamma^h \to P_0 = (0,0)$ as $h \to 0^+$, and
\item[(ii)] $\Gamma^h \to \Gamma^\beta$ as $h \to \beta^-$.
\end{itemize}
Indeed, if we define 
\[
\tG(u) = \int_0^u g(s) \, ds,
\]
then under assumptions \eqref{H2} and \eqref{H3} it is clear that $\tG(0) = 0$, $\tG(1) = \tG(u_*) = \beta$ and $\tG'(u) > 0$ if $u \in (0,1)$, $\tG'(u) < 0$ if $u \in (u_*,0)$. Therefore, for each energy level $h  \in (0,\beta)$ there exist unique values $u_1(h) \in (u_*,0)$ and  $u_2(h) \in(0, 1)$ such that
\[
\tG(u_1(h)) = \tG(u_2(h)) = h,
\]
and the periodic orbits are given explicitly by the graphs
\begin{equation}
\label{pergraphs}
V(U) = \pm \bV^{h}(U) := \pm \sqrt{2 \Big( h - \int_0^U g(s) \, ds \Big) }, \quad U \in (u_1(h), u_2(h)), \;\; h \in (0,\beta).
\end{equation}
It is also clear that $\bV^{h} \to \bV^{\beta}$ as $h \to \beta^-$ and that the orbits shrink to the origin as $h \to 0^+$. See Figure \ref{FigOmegashi}.

\smallskip

\noindent (c) If $T = T(h)$ denotes the fundamental period of the periodic orbit $\Gamma^h$, $h \in (0,\beta)$, then $T(h) \to \infty$ as $h \to \beta^-$. The period can be computed explicitly by the elliptic integral
\[
T(h) = \sqrt{2} \int_{u_1(h)}^{u_2(h)} \frac{dy}{\sqrt{h - \int_0^y g(s) \, ds}}, \qquad h \in (0,\beta).
\]
From standard properties of Hamiltonian systems (see, e.g., \cite{Scha80,Sba12}), $0 < T(h) < \infty$ for each $h \in (0,\beta)$ and $T(h) \to \infty$ as $h \to \beta^-$, which is the infinite period of the homoclinic loop $\Gamma^\beta$.


\begin{figure}[t]
\begin{center}
\includegraphics[scale=.6, clip=true]{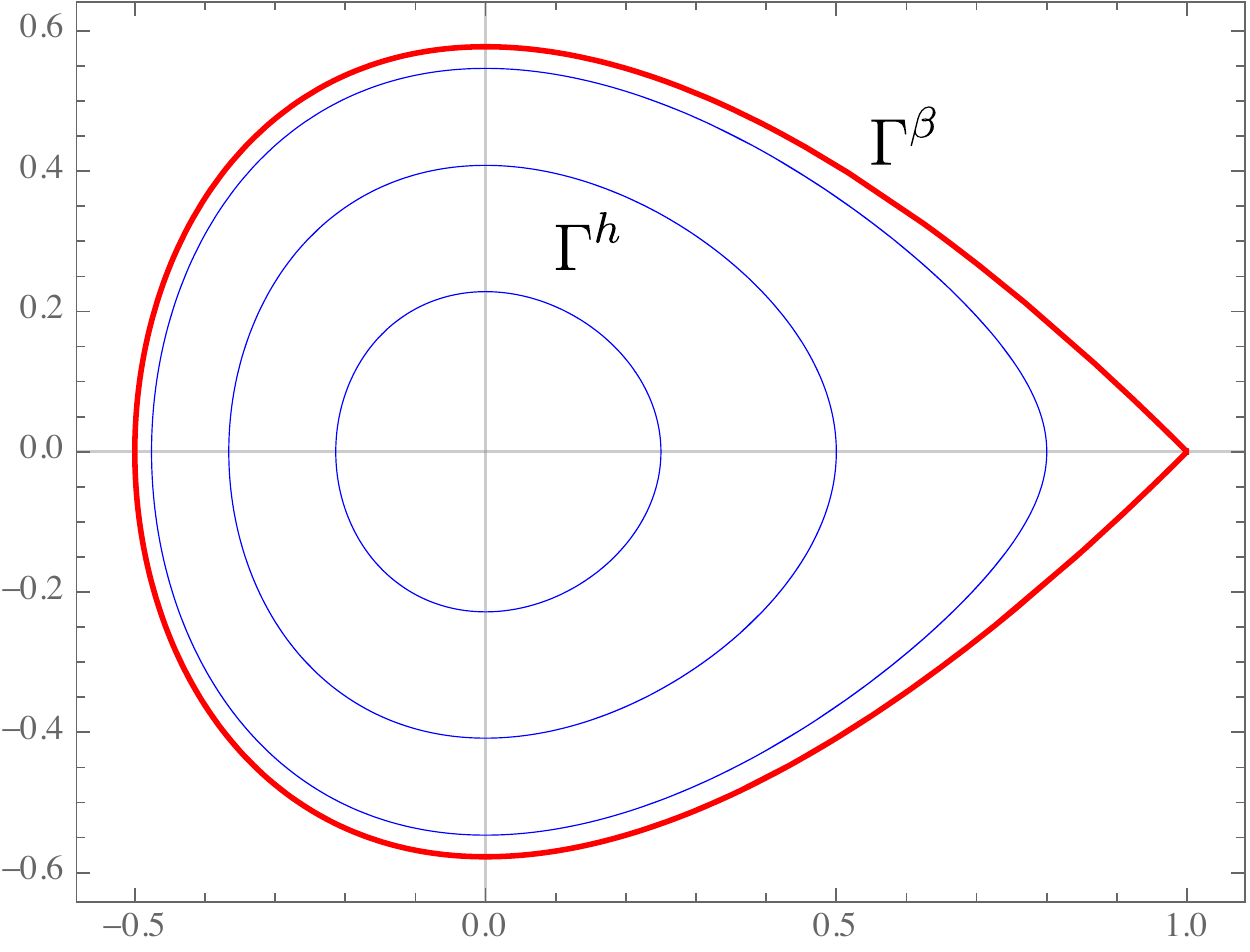}
\end{center}
\caption{\small{Homoclinic loop $\Gamma^\beta$ (in red) in the $(U,V)$-phase plane and periodic orbits $\Gamma^h$, $h \in (0,\beta)$ (in blue), for the Hamiltonian system \eqref{sistH} in the case of the reaction function of logistic type, $g(u) = u(1-u)$ (color online).}}\label{FigOmegashi}
\end{figure}

\smallskip

In view of observations (a) thru (c) above, let us now define the open sets
\begin{equation}
\label{defOmegah}
\begin{aligned}
\Omega_h :=  \text{int} \, \Gamma^h &= \{ (U,V) \in \R^2 \, : \, 0 < H(U,V) < h \} \\ 
&= \left\{ (U,V) \in \R^2 \, : \, u_1(h) < U < u_2(h)), \; - \bV^h(U) < V < \bV^h(U) \right\}, \\
\end{aligned}
\end{equation}
for each $h \in (0,\beta)$ (see Figure \ref{FigOmegashi}). In the same fashion, let us define
\begin{equation}
\label{defOmegab}
\begin{aligned}
\Omega_{\beta} := \text{int} \, \Gamma^{\beta} &= \{ (U,V) \in \R^2 \, : \, 0 < H(U,V) < \beta \} \\ 
&= \left\{ (U,V) \in \R^2 \, : \, u_* < U < 1, \; - \gamma(U) < V < \gamma(U) \right\}.
\end{aligned}
\end{equation}
Then one defines the associated \emph{Melnikov integrals} (cf. \cite{HaYu12}, p. 268) as
\begin{equation}
\label{defMelniInt}
\tM(h, \mu) := \int_{\Omega_h} \left( \partial_U R + \partial_V Q \right) \, dU \, dV.
\end{equation}
They satisfy (see \cite{HaYu12}, Lemmata 6.2 and 6.8):
\begin{itemize}
\item $\tM \in C^\infty$ for $|\vep| + |h - h_0| \ll 1$ small for any $h_0 \in (0,\beta)$, all $\mu \in \R^2$.
\item The derivative with respect to $h$ is determined by
\begin{equation}
\label{derMt}
\partial_h \tM(h,\mu) = \oint_{\Gamma^h} \left( \partial_U R + \partial_V Q \right) \, d\sigma_h, \qquad h \in (0,\beta), \;\, \mu \in \R^2.
\end{equation}
\end{itemize}

Moreover, we define the Melnikov integrals precisely at $h = \beta$ by
\begin{equation}
\label{Melnii}
\begin{aligned}
M(\mu) &:= \tM(\beta, \mu) = \int_{\Omega_\beta} \left( \partial_U R + \partial_V Q \right) \, dU \, dV,\\
M_1(\mu) &:= \partial_h \tM(\beta,\mu) = \oint_{\Gamma^{\beta}} \left( \partial_U R + \partial_V Q \right) \, d\sigma_\beta.
\end{aligned}
\end{equation}

Then we can apply the following classical theorem due to Melinkov \cite{Melni63} (see also \cite{CHM-P80}), which establishes the conditions under which the perturbed system \eqref{unpertH} underlies a homoclinic loop emerging from the homoclinic orbit for the Hamiltonian system \eqref{sistH} (see, e.g., Theorem 6.8, p. 466, in \cite{Chi99}, Theorem 6.4, p. 266 in \cite{HaYu12}, as well as Lemma 4.5.1 and Theorem 4.5.4 in \cite{GuHo83}.)

\begin{theorem}[Melnikov's method for perturbed homoclinic orbits \cite{Melni63}]
\label{theoMelni}
Suppose that $P_1$ is a hyperbolic saddle equilibrium point for the unperturbed Hamiltonian system \eqref{sistH} possessing a homoclinic loop $\Gamma^\beta$. If $\vep > 0$ is sufficiently small then the perturbed system \eqref{unpertH} has a unique hyperbolic equilibrium point $P_1(\vep) = P_1 + O(\vep)$. Moreover, if $M(\mu_0) = 0$ and $M_1(\mu_0) \neq 0$ (that is, if the Melnikov integral has a simple zero at $\mu = \mu_0$ at the energy level $h = \beta$ on the homoclinic loop) then the perturbed system \eqref{unpertH} with $\mu = \mu_0$ has a unique hyperbolic homoclinic loop $\Gamma^\beta_\vep$ for each $\vep > 0$ sufficiently small, relative to the stable and unstable manifolds of the hyperbolic equilibrium point $P_1(\vep)$. If, on the other hand, $M(\mu)$ has no zeroes and $|\vep| \neq 0$ is small, then the stable and unstable manifolds of $P_1(\vep)$ do not intersect.
\end{theorem}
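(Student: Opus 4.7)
The plan is the classical proof of Melnikov's theorem: construct a scalar distance between the perturbed stable and unstable manifolds of the continued saddle, and read existence and transversality of the homoclinic intersection directly from the Melnikov integral. First I would apply the implicit function theorem to the right-hand side of \eqref{unpertH}: since $P_1 = (1,0)$ is a hyperbolic equilibrium of the unperturbed Hamiltonian system (Remark \ref{remsamesad}), there is a unique equilibrium $P_1(\vep) = P_1 + O(\vep)$ for all small $\vep > 0$, and by continuity of the spectrum it remains a hyperbolic saddle. The $C^3$ stable manifold theorem with parameters then supplies local invariant manifolds $W^{s,u}_{\mathrm{loc}}(P_1(\vep))$ that are $C^3$-$O(\vep)$ close to their unperturbed counterparts.

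Next I would extend these local pieces globally by flowing and measure the resulting separation. Fix a regular point $q_0 \in \Gamma^\beta \setminus \{P_1\}$ and a short transversal section $\Sigma$ to the Hamiltonian flow at $q_0$. Standard Gronwall estimates on finite time windows show that the forward branch of $W^u(P_1(\vep))$ and the backward branch of $W^s(P_1(\vep))$ stay $O(\vep)$-close to $\Gamma^\beta$ up to their first hit with $\Sigma$, at points $q^{u,s}(\vep,\mu)$. Define the signed separation
\[
d(\vep,\mu) := \frac{H(q^u(\vep,\mu)) - H(q^s(\vep,\mu))}{\|\nabla H(q_0)\|},
\]
whose zeros correspond exactly to persistent homoclinic intersections. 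Differentiating $H$ along the perturbed flow gives
\[
\frac{d}{dt} H(U,V) = \vep\bigl( g(U) R + V Q \bigr),
\]
and integrating from $-\infty$ along $W^u$ up to the crossing time, and then from the crossing along $W^s$ out to $+\infty$, while replacing the perturbed trajectories by the unperturbed homoclinic at leading order, yields $d(\vep,\mu) = \vep\oint_{\Gamma^\beta}(R\,dV - Q\,dU) + O(\vep^2)$. Green's theorem then converts the line integral into the area form in \eqref{defMelniInt}, producing $d(\vep,\mu) = \vep M(\mu) + O(\vep^2)$.

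Finally, I would close the argument with the implicit function theorem. The hypothesis $M(\mu_0) = 0$ places us at a zero of the leading-order separation, while $M_1(\mu_0) = \partial_h \tM(\beta,\mu_0) \neq 0$ is precisely the nondegeneracy condition that makes this a simple zero along the one-parameter family of energy levels used in the transversal parametrization, equivalent to $\tM(\cdot,\mu_0)$ crossing zero transversally at $h = \beta$. The IFT then produces a smooth branch along which $d$ vanishes for each small $\vep$, yielding the transverse (hence hyperbolic) homoclinic loop $\Gamma^\beta_\vep$, unique in a neighborhood of $\Gamma^\beta$. Conversely, if $M(\mu)$ has no zeros, then $d(\vep,\mu)$ keeps a definite sign for all small $\vep$ and the branches of $W^s(P_1(\vep))$ and $W^u(P_1(\vep))$ cannot meet. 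The main obstacle I anticipate is the uniform control of the Melnikov expansion over the full infinite-time orbit: one must split the integration into a compact time window on which Gronwall gives $O(\vep)$ closeness of trajectories, and exponentially decaying tails near $P_1(\vep)$ on which the linearized saddle dynamics absorb the correction, and verify that the total error is $O(\vep^2)$ uniformly in $\mu$.
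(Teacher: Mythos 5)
The paper does not prove this theorem: it is stated and cited directly from the literature (Chicone, Theorem 6.8; Han and Yu, Theorem 6.4; Guckenheimer and Holmes, Lemma 4.5.1 and Theorem 4.5.4). So there is no proof in the paper against which to compare; the question is whether your sketch is itself sound.

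The computational core of your argument is correct. For the near-Hamiltonian system \eqref{unpertH} one indeed has $\tfrac{d}{dt}H = \vep\big(g(U)R + VQ\big)$, and integrating this along the (almost-)homoclinic, converting the $dt$ integral to a line integral along $\Gamma^\beta$ and applying Green's theorem does recover $d(\vep,\mu) = \vep M(\mu) + O(\vep^2)$ with $M(\mu)$ as in \eqref{Melnii} (your sign is also consistent once one accounts for the clockwise orientation of $\Gamma^\beta$ under the Hamiltonian flow). The persistence of the hyperbolic saddle via the implicit function theorem and the Gronwall/linearization estimates that control the error uniformly over infinite time are also the standard and correct ingredients.

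The gap is in the last paragraph, where you invoke the implicit function theorem. Your separation function $d(\vep,\mu)$ is a function of the perturbation size $\vep$ and the vector parameter $\mu$; the natural nondegeneracy required to solve $d(\vep,\mu) = 0$ near $(\vep,\mu) = (0,\mu_0)$ by the IFT is $D_\mu M(\mu_0) \neq 0$, which would produce a branch $\mu(\vep) \to \mu_0$ along which the separatrices reconnect. The hypothesis of the theorem, however, is $M_1(\mu_0) = \partial_h \tM(\beta,\mu_0) \neq 0$, which is a derivative with respect to the energy parameter $h$ of the period annulus. You assert that because the transversal $\Sigma$ is locally parametrized by $H$ these two kinds of nondegeneracy coincide, but that identification is not established and is in fact false in general: $d$ is the difference of two energy levels at which the stable and unstable manifolds cross $\Sigma$, it is not itself a function of $h$, and $\partial_h \tM(\beta,\mu)$ measures the variation of the Abelian-type integral over the interior periodic orbits $\Gamma^h$, $h < \beta$, as they approach the loop. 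The condition $M_1 \neq 0$ lives in the period-annulus Melnikov theory of Han and Yu, where one studies the first-return displacement function $h \mapsto \tM(h,\mu)$ on the whole annulus and extracts homoclinic persistence and uniqueness from its behavior as $h \to \beta^-$; this is a genuinely different argument from the single-transversal separatrix-splitting you have set up. To close the gap you would either have to prove that $D_\mu M(\mu_0) \neq 0$ and settle for the weaker conclusion that a homoclinic loop exists for some $\mu(\vep)$ near $\mu_0$ (not at $\mu_0$ exactly), or else build the full annulus return-map analysis in which $M_1$ enters naturally; as written, the step ``the IFT then produces a smooth branch along which $d$ vanishes for each small $\vep$'' does not follow from the hypotheses.
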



As a consequence we have the following
\begin{theorem}
\label{theoexisthomo}
Under assumptions \eqref{H1}, \eqref{H2}, \eqref{H3} and \eqref{H5}, system \eqref{unpert} has a unique homoclinic orbit joining the hyperbolic saddle point $P_1 = (1,0)$ with itself for the parameter values $a = 1$ and $c = c_1 = I_1/I_0$.
\end{theorem}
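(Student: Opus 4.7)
My approach is to apply Melnikov's theorem (Theorem \ref{theoMelni}) to the near-Hamiltonian formulation \eqref{unpertH}. Hypotheses \eqref{H2}--\eqref{H3} already furnish the unperturbed homoclinic loop $\Gamma^\beta$ for \eqref{sistH} through the hyperbolic saddle $P_1 = (1,0)$ (observation (a) of the excerpt), and $P_1$ persists as a hyperbolic saddle of the perturbed system for every $(\vep,\mu)$ by Remark \ref{remsamesad}. It therefore suffices to exhibit a simple zero of the Melnikov integral $M(\mu)$ along the parameter direction corresponding to the claimed critical ratio $c = c_1 a$.

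\textbf{Computing $M$ and $M_1$.} Since $R \equiv 0$ and $\partial_V Q = \mu_1 f'(U) - \mu_2$ is independent of $V$, Fubini applied to the description of $\Omega_\beta$ in \eqref{defOmegab} yields
\begin{equation*}
M(\mu) = \int_{u_*}^{1} \bigl(\mu_1 f'(U) - \mu_2\bigr)\!\int_{-\gamma(U)}^{\gamma(U)} dV\, dU = 2\bigl(\mu_1 I_1 - \mu_2 I_0\bigr),
\end{equation*}
using \eqref{lasIs}. Hence $M(\mu) = 0$ precisely along the line $\mu_2/\mu_1 = I_1/I_0 = c_1$. For $M_1$, parametrizing the two branches of $\Gamma^\beta$ by $V = \pm\gamma(U)$ so that $d\sigma_\beta = \sqrt{1 + \gamma'(U)^2}\, dU$ and summing the two contributions,
\begin{equation*}
M_1(\mu) = 2\int_{u_*}^{1} \bigl(\mu_1 f'(U) - \mu_2\bigr)\sqrt{1+\gamma'(U)^2}\, dU = \mu_1 J - \mu_2 L,
\end{equation*}
again by \eqref{lasIs}. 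Choosing $\mu_0 := (I_0, I_1)$ gives $M(\mu_0) = 2(I_0 I_1 - I_1 I_0) = 0$ while $M_1(\mu_0) = I_0 J - I_1 L \neq 0$ by the non-degeneracy hypothesis \eqref{H5}, so $\mu_0$ is a simple zero of $M$ on $\Gamma^\beta$.

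\textbf{Conclusion and main obstacle.} Theorem \ref{theoMelni} then produces, for each sufficiently small $\vep > 0$, a unique hyperbolic homoclinic loop of \eqref{unpertH} with $\mu = \mu_0$, which via $(a,c) = (\vep I_0, \vep I_1)$ corresponds to the line $c = c_1 a$ in the original parameters, with $a$ small. The step I expect to be the main obstacle is bridging this perturbative conclusion to the value $a = 1$ stated in the theorem. Because the Melnikov integrand is linear in $\mu$, the selected ratio $c/a = c_1$ is itself independent of scale, so I would close the gap either by a rescaling of $(U,V,z)$ that reduces arbitrary $a > 0$ to the small-$\vep$ case or by parameter continuation along the ray $c = c_1 a$, using the hyperbolicity of the loop (guaranteed by $M_1(\mu_0) \neq 0$) to preclude bifurcation and the fact that $\mu_0$ is the only zero direction of $M$ to rule out additional homoclinic orbits, thereby securing uniqueness.
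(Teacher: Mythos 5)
Your computation matches the paper's exactly: $M(\mu) = 2(\mu_1 I_1 - \mu_2 I_0)$ and $M_1(\mu) = \mu_1 J - \mu_2 L$, so the zero set of $M$ is the ray $\mu_2/\mu_1 = c_1 := I_1/I_0$, along which $M_1 \neq 0$ by \eqref{H5}. Your choice $\mu_0 = (I_0, I_1)$ and the paper's $\mu_0 = (1/\vep,\, c_1/\vep)$ lie on the same ray; since $M$ and $M_1$ are homogeneous of degree one in $\mu$, this is immaterial.

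The step you flag as the main obstacle is also where the paper's proof takes its crucial turn. The published device is to tie the splitting parameter $\vep$ to $\mu$ by setting $\mu_1 := 1/\vep$ and $\mu_2 := c_1\mu_1$, so that $a = \vep\mu_1 = 1$ and $c = \vep\mu_2 = c_1$ hold identically for every small $\vep > 0$: the near-Hamiltonian system \eqref{unpertH} at these parameter values is then literally system \eqref{unpert} at $(a,c) = (1, c_1)$, and Theorem \ref{theoMelni} is invoked, together with the observation of Remark \ref{remsamesad} that $P_1 = (1,0)$ is a hyperbolic saddle of the perturbed system for all parameter values, so that $P_1(\vep) \equiv P_1$. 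This is precisely the rescaling alternative you describe (not the continuation one), and it is made possible by the linear homogeneity of $M$ and $M_1$ in $\mu$ that you observe. Your caution about whether this genuinely constitutes a small perturbation in the sense Melnikov's theorem requires is understandable---the paper does not discuss that point further---but adopting this scaling is what brings the argument to the form in which it appears in the paper, and doing so would make your proof match it line for line.
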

\begin{proof}
Follows upon application of Melnikov's method. First notice that in view that $R(U,V) = 0$ and $Q(U,V) = \mu_1 f'(U)V - \mu_2 V$ we can evaluate the Melnikov integrals \eqref{Melnii}. Since $\partial_U R = 0$ and $\partial_V Q = \mu_1 f'(U) - \mu_2$ we then have
\[
\begin{aligned}
M(\mu) &= \int_{\Omega_\beta} ( \mu_1 f'(U) - \mu_2 ) \, dV \, dU\\
&= \int_{u_*}^1 \int_{-\gamma(U)}^{\gamma(U)} ( \mu_1 f'(U) - \mu_2 ) \, dV \, dU\\
&= 2 \Big( \mu_1 \int_{u_*}^1 f'(U) \gamma(U) \, dU - \mu_2  \int_{u_*}^1 \gamma(U) \, dU \Big)\\
&= 2 \big( \mu_1 I_1 - \mu_2 I_0 \big).
\end{aligned}
\]
Hence, $M(\mu) = 0$ only when
\begin{equation}
\label{mu1mu2}
\mu_2 = \Big( \frac{I_1}{I_0} \Big) \mu_1.
\end{equation}
Now let us evaluate $M_1$ at any $\mu \in \R^2$ satisfying \eqref{mu1mu2}. From \eqref{Melnii} we obtain
\[
\begin{aligned}
M_1(\mu) &= \oint_{\Gamma^\beta} (\mu_1 f'(U) - \mu_2) \, d\sigma_\beta \\
&= \mu_1 \oint_{\Gamma^\beta} f'(U) \, d\sigma_\beta - \mu_2 \oint_{\Gamma^\beta} \, d\sigma_\beta \\
&= 2 \mu_1 \int_{u_*}^1 f'(U) \sqrt{1 + \gamma'(U)^2} \, dU - \mu_2 |\partial \Omega_\beta| \\
&= \mu_1 J - \mu_2 L\\
&= \mu_1 \left( J - \Big( \frac{I_1}{I_0} \Big)  L \right) \neq 0
\end{aligned}
\]
if $\mu_1 \neq 0$ and in view of \eqref{H5}. Therefore, choose $\vep > 0$ sufficiently small and set
\[
\mu_1 := \frac{1}{\vep} > 0, \qquad \mu_2 = \Big( \frac{I_1}{I_0} \Big) \mu_1,
\]
so that 
\[
c = \vep \mu_2 = \frac{I_1}{I_0}, \qquad a = \vep \mu_1 = 1.
\]
Hence $\mu_0 := (\mu_1,\mu_2) = (1/\vep, (I_0/\vep I_1)) \in \R^2$ is the bifurcation value for which the Melnikov integral has a simple zero. In this case the critical value for the speed is
\[
c = c_1 = \frac{I_1}{I_0}.
\]

Upon application of Theorem \ref{theoMelni}, if $\vep$ is sufficiently small then the perturbed system \eqref{unpertH} has a unique hyperbolic point $P_1(\vep) = P_1 + O(\vep)$. But since $P_1 = (1,0)$ is a hyperbolic saddle for system \eqref{unpertH} for any parameter values we observe that
\[
P_1(\vep) \equiv P_1 = (1,0), \qquad \text{for any } \; 0 < \vep \ll 1.
\]
Now, since $M (\mu_0) = 0$ and $M_1(\mu_0) \neq 0$ we conclude that the perturbed system has a unique homoclinic loop $\Gamma^\beta_\vep$ relative to the stable and unstable manifolds at $P_1$, for parameter values $a = 1$ and $c = c_1$. This yields the result.
\end{proof}

\begin{corollary}[existence of a traveling pulse]
\label{corpulse}
The system \eqref{firstos} has a homoclinic loop for the speed value $c = c_1 = I_1/I_0$, which we denote as
\[
\Gamma_0 := \{ (\psi, \psi')(z) \, : \, z \in \R\},
\]
with $\psi \in C^3(\R)$ and such that $(\psi, \psi')(z) \to (1,0)$ as $z \to \pm \infty$. Moreover, the convergence is exponential, that is, there exist constants $C, \kappa > 0$ such that
\begin{equation}
\label{expdecay}
|\psi(z) - 1|, |\psi'(z)| \leq C e^{-\kappa |z|}, \qquad \text{as } \; |z| \to \infty.
\end{equation}
This homoclinic orbit is associated to a traveling pulse solution to the viscous balance law \eqref{vbl} of the form $u(x,t) = \psi(x - c_1 t)$ and traveling with speed $c = c_1$.
\end{corollary}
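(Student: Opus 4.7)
The plan is to observe that the corollary is essentially a restatement of Theorem \ref{theoexisthomo} once one recognizes the connection between the auxiliary perturbed system \eqref{unpert} and the traveling wave system \eqref{firstos}, and then to upgrade the qualitative homoclinic result to the exponential decay estimate \eqref{expdecay}.

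First, I would note that setting $a=1$ in the auxiliary system \eqref{unpert} produces \emph{exactly} the traveling wave system \eqref{firstos}. Since Theorem \ref{theoexisthomo} gives a homoclinic orbit of \eqref{unpert} at $a=1$ and $c=c_1=I_1/I_0$, pulling back via the identification $\mu_1 = 1/\vep$, $\mu_2 = (I_1/I_0)\mu_1$ yields a homoclinic orbit of \eqref{firstos} at $c = c_1$ connecting $P_1 = (1,0)$ to itself. Denote this orbit by $\Gamma_0 = \{(\psi,\psi')(z) : z \in \R\}$, parametrized so that $(\psi,\psi')(z) \to (1,0)$ as $|z| \to \infty$. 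Setting $u(x,t) = \psi(x-c_1 t)$ then satisfies the profile ODE \eqref{profileq} with speed $c_1$ by construction, hence is a traveling pulse solution of \eqref{vbl}.

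Next, I would establish the regularity. Because $f \in C^4(\R)$ by \eqref{H1} and $g \in C^3(\R)$ by \eqref{H2}, the vector field $(F,G)$ in \eqref{firstos} is of class $C^3$ in $(U,V)$, so standard ODE regularity (applied iteratively: $\psi'$ is $C^2$ from the first equation, and hence $\psi'' = G(\psi,\psi',c_1)$ gives an extra derivative) yields $\psi \in C^3(\R)$.

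The only remaining point is the exponential decay \eqref{expdecay}. For this I would invoke the Hartman–Grobman / stable manifold theorem at the hyperbolic saddle $P_1$: by \eqref{evaluesA1} and \eqref{H2}, the Jacobian $A_1$ at $P_1$ evaluated at $c = c_1$ has real eigenvalues $\lambda_1^-(c_1) < 0 < \lambda_1^+(c_1)$. Since the homoclinic orbit lies in $W^s(P_1) \cap W^u(P_1)$, standard stable/unstable manifold theory (see, e.g., Chicone \cite{Chi99}) guarantees that $(\psi,\psi')(z) - (1,0)$ decays along $W^s(P_1)$ at rate $|\lambda_1^-(c_1)|$ as $z \to +\infty$, and along $W^u(P_1)$ at rate $\lambda_1^+(c_1)$ as $z \to -\infty$. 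Taking
\[
\kappa := \tfrac{1}{2}\min\{ \lambda_1^+(c_1),\, |\lambda_1^-(c_1)|\} > 0
\]
and choosing the constant $C > 0$ uniformly on the bounded piece of the orbit, we obtain \eqref{expdecay} on all of $\R$, completing the argument. None of the steps presents a real obstacle; the main point is simply to identify $a=1$ in the Melnikov machinery as the relevant subcase.
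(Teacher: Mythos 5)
Your proof is correct and follows essentially the same route as the paper: it takes the homoclinic orbit from Theorem \ref{theoexisthomo} (observing that $a=1$ recovers system \eqref{firstos}), bootstraps regularity from $C^2$ to $C^3$ through the profile ODE, and deduces exponential decay from the hyperbolicity of the saddle $P_1$ at $c = c_1$. The only cosmetic deviation is your choice $\kappa = \tfrac{1}{2}\min\{\lambda_1^+(c_1), |\lambda_1^-(c_1)|\}$, whereas the paper takes $\kappa = \min\{\lambda_2(c_1), -\lambda_1(c_1)\}$ without the safety factor of $\tfrac{1}{2}$ — a harmless difference here, though the paper later reuses this specific $\kappa$ in the bounds \eqref{boundw1}--\eqref{boundc1}, so it is worth keeping the two definitions consistent.
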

\begin{proof}
Let us denote the homoclinic orbit fromTheorem \ref{theoexisthomo} as $(\psi,\psi')(z)$. This orbit is a solution to system \eqref{firstos} with speed value $c = c_1$. Since $F,G \in C^3$ it is clear that $\psi \in C^2$. Upon differentiation of \eqref{firstos} we obtain
\begin{equation}
\label{psi3}
\psi''' = (-c_1 + f'(\psi)) \psi'' + f''(\psi)(\psi')^2 - g'(\psi) \psi'.
\end{equation}
Then by a boostrapping argument we conclude that $\psi \in C^3(\R)$. The exponential decay follows from standard ODE estimates and the fact that $P_1 = (1,0)$ is a hyperbolic saddle for system \eqref{firstos} for the speed value $c = c_1$. More precisely, the stable and unstable eigenvalues are given by
\begin{equation}
\label{evaluessaddle}
\begin{aligned}
\lambda_1(c_1) &= \frac{1}{2}(f'(1) - c_1) - \frac{1}{2} \sqrt{(f'(1) - c_1)^2 - 4 g'(1)} < 0,\\
\lambda_2(c_1) &= \frac{1}{2}(f'(1) - c_1) + \frac{1}{2} \sqrt{(f'(1) - c_1)^2 - 4 g'(1)} > 0,
\end{aligned}
\end{equation}
so that
\[
\begin{aligned}
|\psi(z) - 1|, |\psi'(z)| &\leq C e^{\lambda_1(c_1)z}, \quad \text{as } \; z \to \infty,\\
|\psi(z) - 1|, |\psi'(z)| &\leq C e^{\lambda_2(c_1)z}, \quad \text{as } \; z \to -\infty.
\end{aligned}
\]
Thus, we can take
\begin{equation}
\label{defkappa}
\kappa := \min \{ \lambda_2(c_1), -\lambda_1(c_1) \} > 0,
\end{equation}
to obtain \eqref{expdecay}, as claimed.
\end{proof}

\subsection{Periodic wavetrains with large fundamental period}
\label{seclongp}

The following theorem is the classical result by Andronov and Leontovich \cite{AnLeo37,ALGM73}, which guarantees the bifurcation of a limit cycle from a homoclinic loop of a saddle with non-zero saddle quantity (see, for example, Theorem 13.4 in \cite{SSTC01}, p. 700).

\begin{theorem}[Andronov-Leontovich \cite{AnLeo37,ALGM73}]
\label{themAL}
Consider a planar system of the form
\[
\begin{aligned}
U' &= F(U,V,\mu)\\
V' &= G(U,V,\mu)
\end{aligned}
\]
where $F$ and $G$ are smooth functions and $\mu \in \R$. Suppose that $(U_0,V_0)$ is a hyperbolic saddle point for all values $\mu$ near $\mu_0$; that at $\mu = \mu_0$ the associated eigenvalues are $\lambda_1(\mu_0) < 0 < \lambda_2(\mu_0)$; and that the system possesses a homoclinic orbit $\Gamma_0$ at the saddle. Let us define the \emph{saddle quantity} as,
\[
\Sigma_0 := \lambda_1(\mu_0) + \lambda_2(\mu_0),
\]
and suppose that $\Sigma_0 \neq 0$. Then:
\begin{itemize}
\item[(a)] If $\Sigma_0 < 0$ then for sufficiently small $\mu - \mu_0 > 0$ there exists a unique stable limit cycle $\Gamma(\mu)$ bifurcating from $\Gamma_0$ which as $\mu \to \mu_0^+$ gets closer to the homoclinic loop at $\mu = \mu_0$. When $\mu < \mu_0$ there are no limit cycles.
\item[(b)] If $\Sigma_0 > 0$ then for sufficiently small $\mu - \mu_0 < 0$ there exists a unique unstable limit cycle $\Gamma(\mu)$ bifurcating from $\Gamma_0$ which as $\mu \to \mu_0^-$ becomes the homoclinic loop at $\mu = \mu_0$. When $\mu > \mu_0$  there are no limit cycles.
\end{itemize}
\end{theorem}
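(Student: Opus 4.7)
The plan is to construct a Poincar\'e first-return map in a tubular neighborhood of the homoclinic loop $\Gamma_0$ and show that its fixed points---which correspond one-to-one to limit cycles of the perturbed system---exist, are unique, and have the claimed stability for the indicated sign of $\mu - \mu_0$. I would proceed in three steps: (i) set up local coordinates near the saddle and derive the asymptotic form of the local passage map; (ii) follow the regular flow along $\Gamma_0$ away from the saddle to obtain a smooth global map, and compose; (iii) analyze the resulting scalar fixed-point equation in a half-neighborhood of $s=0$, treating the two sign cases of $\Sigma_0$ separately.

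For step (i), I would place two small cross-sections $\Sigma^{\text{in}}$ and $\Sigma^{\text{out}}$ near $(U_0,V_0)$, transverse respectively to the local stable and unstable manifolds $W^s_\mu$ and $W^u_\mu$ of the saddle. Using a $C^1$ linearization in the spirit of Hartman--Grobman (or, in the generic non-resonant case, a smoother Belitskii normal form), I parametrize $\Sigma^{\text{in}}$ by a signed coordinate $s$ measuring the distance to $W^s_\mu$ and $\Sigma^{\text{out}}$ by a coordinate measuring the distance to $W^u_\mu$. Integrating the near-linear flow yields a passage time $\tau(s) \sim -\lambda_2(\mu)^{-1}\log s$ and the local map
\begin{equation*}
\Pi_{\text{loc}}(s;\mu) = A(\mu)\, s^{\nu(\mu)} + o\bigl(s^{\nu(\mu)}\bigr), \qquad \nu(\mu) := -\frac{\lambda_1(\mu)}{\lambda_2(\mu)} > 0,
\end{equation*}
uniformly for small $s > 0$ and $\mu$ near $\mu_0$, with $A(\mu) > 0$. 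The saddle-quantity hypothesis is precisely $\nu(\mu_0) \neq 1$; more specifically, $\Sigma_0 < 0$ is equivalent to $\nu(\mu_0) > 1$ and $\Sigma_0 > 0$ is equivalent to $\nu(\mu_0) < 1$.

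For step (ii), because $\Gamma_0$ remains at positive distance from the saddle on its remaining arc, the flow defines, after a finite transit time depending smoothly on $\mu$, a smooth diffeomorphism $\Pi_{\text{glob}}(\cdot;\mu)$ from a neighborhood of $\Gamma_0 \cap \Sigma^{\text{out}}$ into $\Sigma^{\text{in}}$. Let $d(\mu) := \Pi_{\text{glob}}(0;\mu)$ denote the \emph{splitting function}, that is, the signed $\Sigma^{\text{in}}$-distance between $W^u_\mu$ and $W^s_\mu$. By hypothesis $d(\mu_0) = 0$, and under the generic transversality $d'(\mu_0) \neq 0$ (tacitly imposed by the statement's distinction between $\mu > \mu_0$ and $\mu < \mu_0$, so that the loop splits definitely to one side) $d(\mu)$ changes sign at $\mu_0$. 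Composing the two maps yields the full Poincar\'e return map on $\Sigma^{\text{in}}$,
\begin{equation*}
P(s;\mu) \;=\; d(\mu) + B(\mu)\, s^{\nu(\mu)} + o\bigl(s^{\nu(\mu)}\bigr),
\end{equation*}
with $B(\mu_0) \neq 0$ inherited from the global diffeomorphism and the local expansion.

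For step (iii), the fixed-point equation $P(s;\mu) = s$ becomes $s - B(\mu) s^{\nu(\mu)} + o(s^{\nu(\mu)}) = d(\mu)$. In case (a), $\nu(\mu_0) > 1$, so the left-hand side is a $C^1$ strictly increasing function of $s$ near $0^+$ with unit slope at the origin; thus for $d(\mu)$ small and of the appropriate sign (corresponding to $\mu - \mu_0 > 0$, after possibly reversing the orientation of $\mu$) there is a unique solution $s^*(\mu) > 0$ tending to $0$ as $\mu \to \mu_0^+$, and no solution on the other side. Differentiating gives $\partial_s P(s^*;\mu) = \nu(\mu) B(\mu) (s^*)^{\nu(\mu)-1} + o(1) \to 0$ as $\mu \to \mu_0^+$, so the bifurcating cycle is asymptotically stable. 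Case (b), $\nu(\mu_0) < 1$, is symmetric and can also be deduced from case (a) by time reversal, which swaps $\lambda_1 \leftrightarrow -\lambda_2$ (hence the sign of $\Sigma_0$) and interchanges attractors with repellers. The main obstacle is step (i): establishing the expansion of $\Pi_{\text{loc}}$ with the saddle exponent $\nu$ to enough regularity in $s$ to pass to derivatives in step (iii). This requires either a smooth linearization (available outside a discrete resonant set of $\nu$-values; resonances introduce logarithmic corrections that do not alter the conclusion) or a direct Shilnikov-style integration using the invariant manifolds as coordinate surfaces, both of which are standard but delicate.
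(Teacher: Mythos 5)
The paper does not prove Theorem \ref{themAL}: it is quoted as a classical result of Andronov and Leontovich, with the reader pointed to \cite{SSTC01} (Theorem 13.4, p.\ 700). So there is no ``paper's own proof'' to compare against; what can be assessed is whether your blind sketch matches the standard argument in the cited literature, and it does. Your three-step decomposition---local singular passage map $\Pi_{\text{loc}}(s) = A\,s^{\nu}+o(s^{\nu})$ with $\nu = -\lambda_1/\lambda_2$, regular global map yielding the splitting function $d(\mu)$, and analysis of the composed return map $P(s;\mu) = d(\mu)+B(\mu)s^{\nu}+o(s^{\nu})$---is exactly the Poincar\'e/Shilnikov construction underlying the theorem. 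The translation $\Sigma_0<0 \Longleftrightarrow \nu>1$ is correct, the fixed-point analysis is sound, and the time-reversal reduction of case (b) to case (a) is a legitimate shortcut.

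Two points worth tightening. First, you correctly flag the regularity of the local map as the delicate step: the conclusion about stability requires not just $P(s)=d(\mu)+B s^{\nu}+o(s^{\nu})$ but a $C^1$-estimate $\partial_s P(s) = \nu B s^{\nu-1}+o(s^{\nu-1})$. For planar hyperbolic saddles this is available (Hartman's $C^1$-linearization for planar $C^2$ fields holds without non-resonance conditions, and the Shilnikov boundary-value-problem approach gives the needed derivative estimates with at worst logarithmic corrections at $\nu\in\mathbb{N}$), so your parenthetical remark is accurate but could be stated more confidently. Second, the theorem as written implicitly presumes a sign convention on the splitting function $d$ (so that $\mu>\mu_0$ corresponds to the unstable manifold landing on the side of $W^s$ where the return map is defined); your phrase ``after possibly reversing the orientation of $\mu$'' correctly identifies this, and it should be flagged as part of the normalization rather than as an optional step, since without $d'(\mu_0)\neq 0$ and a choice of orientation the one-sidedness assertions in (a) and (b) are not well-posed. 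With these clarifications your sketch is a faithful account of the classical proof.
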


The existence of large-period, bounded periodic orbits is a consequence of both the existence of a homoclinic loop and Andronov-Leontovich's theorem.

\begin{theorem}[existence of large period orbits]
\label{theoexistlp}
Under assumptions \eqref{H1}, \eqref{H2}, \eqref{H3}, \eqref{H5} and \eqref{H6}, there exist a critical speed $c_1$ given by $c_1 = I_1/I_0$ and $0 < \tilde{\ep}_1 \ll 1$ sufficiently small such that, for each value $c \in (c_1 - \tilde{\ep}_1, c_1)$ if $f'(1) > c_1$ (respectively, for each value $c \in (c_1, c_1 + \tilde{\ep}_1)$ if $f'(1) < c_1$) system \eqref{firstos} has a unique closed periodic orbit solution $(\overline{U},\overline{V})(z)$ which becomes the homoclinic loop at $P_1 = (1,0)$ from Theorem \ref{theoexisthomo} as $c \to c_1^-$ (respectively, as $c \to c_1^+$). Moreover, the amplitude of the periodic orbits and their fundamental periods behave like
\[
|\overline{U}|, |\overline{V}| = O(1), 
\]
and
\[
T(c) = O(|\log (|c - c_1|)|) \to \infty,
\]
respectively, as $c \to c_1$.
\end{theorem}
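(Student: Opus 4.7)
The plan is to apply Andronov--Leontovich's Theorem~\ref{themAL} to the planar system~\eqref{firstos} with bifurcation parameter $\mu := c$ and critical value $\mu_0 := c_1 = I_1/I_0$. The three ingredients required by that theorem are a hyperbolic saddle persisting on a $c$-neighborhood of $c_1$, a homoclinic loop at $c=c_1$, and a non-zero saddle quantity. The first is immediate from~\eqref{evaluesA1} together with \eqref{H2}: for every $c$, $P_1=(1,0)$ is a saddle with eigenvalues $\lambda_1^{\pm}(c)$ of opposite sign since $g'(1)<0$. The second is exactly Corollary~\ref{corpulse}, whose hypotheses \eqref{H1}--\eqref{H3} and \eqref{H5} are among those assumed here.

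For the third ingredient, the explicit eigenvalues~\eqref{evaluessaddle} at $c=c_1$ give the saddle quantity
\[
\Sigma_0 \;=\; \lambda_1(c_1)+\lambda_2(c_1)\;=\;f'(1)-c_1\;=\;f'(1)-\frac{I_1}{I_0},
\]
which is non-zero precisely by the saddle condition~\eqref{H6}. I then split into two cases. If $f'(1)>c_1$ then $\Sigma_0>0$, so case~(b) of Theorem~\ref{themAL} yields a unique unstable limit cycle $(\bU,\bV)(z)$ for $c_1-c>0$ small, which degenerates onto the homoclinic loop as $c\to c_1^-$. If $f'(1)<c_1$ then $\Sigma_0<0$, and case~(a) produces a unique stable limit cycle for $c-c_1>0$ small, again converging to the loop as $c\to c_1^+$. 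Choosing $\tilde{\ep}_1>0$ sufficiently small gives the one-sided interval stated in the theorem. Since these cycles converge (in Hausdorff distance) to the homoclinic loop $\Gamma^{\beta}$ of~\eqref{homoloop}, whose $(U,V)$-extent lies in the fixed compact set $\{u_*\le U\le 1,\, |V|\le\max\gamma\}$, the amplitude bound $|\bU|,|\bV|=O(1)$ is immediate.

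The remaining ingredient, and the main technical obstacle, is the logarithmic period asymptotic $T(c)=O(|\log|c-c_1||)$. This is the classical behavior of periodic orbits bifurcating from a homoclinic loop of a hyperbolic saddle; my plan is to derive it by splitting the period into an \emph{inner} passage time near $P_1$ and an \emph{outer} transit along the loop. In $C^1$-linearizing coordinates near $P_1$ the flow is approximately $x'=\lambda_2(c_1)x$, $y'=\lambda_1(c_1)y$, and an orbit that enters a box of radius $\delta$ along the stable manifold and exits at radius $\delta$ along the unstable manifold consumes time $\sim|\log\delta|/\lambda_2(c_1)$. The outer portion tracks the homoclinic loop uniformly and contributes only a bounded transit time. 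The scale $\delta$ is the separation between the local stable and unstable manifolds of $P_1$ at a transversal section of the loop; its derivative with respect to $c$ at $c_1$ is given by a standard planar Melnikov integral along the homoclinic orbit, whose integrand is essentially $V(z)^2$ (not identically zero on $\Gamma_0$) weighted by an exponential factor, and is therefore non-zero, giving $\delta=O(|c-c_1|)$. Combining the inner and outer estimates yields $T(c)=O(|\log|c-c_1||)\to\infty$ as $c\to c_1$. To make the logarithmic count fully rigorous I would either invoke the sharp period formula near homoclinic bifurcations available in \cite{Chi99,SSTC01}, or carry out the passage-time computation in a Hartman linearization chart glued to a smoothly varying transverse section through the loop.
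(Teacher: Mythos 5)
Your proposal follows essentially the same route as the paper: apply Andronov--Leontovich's Theorem~\ref{themAL} with bifurcation parameter $c$, verify the hyperbolic saddle at $P_1$ and the homoclinic loop from Theorem~\ref{theoexisthomo}, compute the saddle quantity $\Sigma_0 = f'(1) - c_1 \neq 0$ via~\eqref{H6}, and deduce the one-sided family of limit cycles, the $O(1)$ amplitude from proximity to the compact homoclinic loop, and the logarithmic period from the passage time near the saddle. The only cosmetic difference is that you sketch the inner/outer decomposition justifying $T(c)=O(|\log|c-c_1||)$ where the paper simply cites Gaspard and Strogatz.
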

\begin{proof}
Under the assumptions, it is clear that $P_1 = (1,0)$ is a hyperbolic saddle for system \eqref{firstos} with $c = c_1$ (see Remark \ref{remsamesad}). Also, from Theorem \ref{theoexisthomo} this system underlies a homoclinic orbit joining $P_1$ with itself for this critical value for the speed. The eigenvalues of the linearization of \eqref{firstos} at $P_1$ evaluated at the critical bifurcation parameter $c = c_1$ satisfy
\[
\lambda_1(c_1) < 0 <\lambda_2(c_1),
\]
where $\lambda_1(c_1)$ and $\lambda_2(c_1)$ are given by \eqref{evaluessaddle}. Hence the saddle quantity is non-zero,
\[
\Sigma_0 = f'(1) - c_1 \neq 0,
\]
in view of \eqref{H6}.  From Andronov-Leontovich's theorem we conclude the existence of $\tilde{\ep}_1 > 0$ sufficiently small such that, if $f'(1) > c_1$ (respectively, $f'(1) < c_1$) then for each $c \in (c_1- \tilde{\ep}_1,c_1)$ (respectively, $c \in (c_1, c_1 + \tilde{\ep}_1)$) there exists a unique closed periodic orbit for system \eqref{firstos} with large fundamental period $T(c)$. That the amplitude of the family of periodic orbits (for each $c$ near $c_1$) is of order $O(1)$ follows directly from the fact that they belong to a neighborhood of the homoclinic loop. That the fundamental period behaves like $O(|\log (|c - c_1|)|)$ follows from a direct estimation of the time required for a trajectory to pass by a saddle point (see Gaspard \cite{Gasp90}, or Exercise 8.4.12 in \cite{Strg94}). The theorem is proved.
\end{proof}

\subsection*{Proof of Theorem \ref{thmexlarge}}

Assume \eqref{H1}, \eqref{H2}, \eqref{H3}, \eqref{H5} and \eqref{H6}. Then apply Theorem \ref{theoexistlp} to conclude the existence of a family of periodic orbits parametrized by
\[
0 < \ep := | c - c_1 | \in (0, \tilde{\ep}_1),
\]
which we denote as $(\bU^\ep,\bV^\ep) (z) =: (\varphi^\ep, (\varphi^\ep)')(z)$, $z \in \R$, solutions to system \eqref{firstos} with speed value $c(\ep) = c_1 + \ep$ if $f'(1) < c_1$ or $c(\ep) = c_1 - \ep$ if $f'(1) > c_1$, with fundamental period
\[
T_\ep = O(| \log \ep |) \to \infty,
\]
and amplitude
\[
|\varphi^\ep(z)|, |(\varphi^\ep)'(z)| = O(1),
\]
as $\ep \to 0^+$. Moreover, the family of orbits converge to the homoclinic loop of Theorem \ref{theoexisthomo}, relative to the saddle point $P_1 = (1,0)$ as $\ep \to 0^+$, which we denote as
\[
(\varphi^0,(\varphi^0)')(z) := (\psi, \psi')(z), \qquad z \in \R,
\]
with $(\varphi^0, (\varphi^0)')(z) \to (1,0)$ exponentially fast as $z \to \pm \infty$. Thanks to this convergence of the family we know that there exists $\widetilde{\delta}(\ep) > 0$ such that $\widetilde{\delta}(\ep) \to 0$ as $\ep \to 0^+$ and 
\[
|\varphi^0(z) - \varphi^\ep(z)| \leq \widetilde{\delta}(\ep), \qquad \text{for all } \; |z| \leq \frac{T_\ep}{2}.
\]

Since the homoclinic loop $(\varphi^0,(\varphi^0)') = (\psi, \psi')$ is non-degenerate in the sense of Beyn \cite{Beyn90a,Beyn90b} (see Definition \ref{defnondeg} and Lemma \ref{lemnondefgen} in Appendix \ref{apenon} below) then we can apply Corollary 3.2 in Beyn \cite{Beyn90a} (p. 178) to conclude that there exists $0 < {\ep}_1 < \tilde{\ep}_1$ sufficiently small and an appropriate reparametrization of the phase $z$ such that
\[
\begin{aligned}
&\sup_{z \in [-\frac{T_\ep}{2}, \frac{T_\ep}{2}]} \left( |\varphi^0(z) - \varphi^\ep(z)| + |(\varphi^0)'(z) - (\varphi^\ep)'(z)  |\right) \leq \\ & \qquad \qquad \leq C \exp \Big(\!-\!\big( \min \{ \lambda_2(c_1), |\lambda_1(c_1)|\} \big) \frac{T_\ep}{2} \Big),
\end{aligned}
\]
and
\[
\ep \leq C \exp \left( -  \big( \min \{ \lambda_2(c_1), |\lambda_1(c_1)|\} \big) T_\ep \right),
\]
for each $0 < \ep < {\ep}_1$, where $\lambda_2(c_1)$, $\lambda_2(c_1)$ are the spectral bounds of the homoclinic orbit given by \eqref{evaluessaddle}. Set $\kappa = \min \{ \lambda_2(c_1), |\lambda_1(c_1)|\} > 0$ (like in \eqref{defkappa}). This shows the bounds \eqref{boundw1} and \eqref{boundc1}. Finally, the family of orbits $\varphi^\ep$ is of class $C^3$ in $z \in \R$ and in the bifurcation parameter $c$ thanks to the regularity of $f$ and $g$, and to standard ODE results. The theorem is proved.
\qed

\begin{remark}
Actually, the bound \eqref{boundc1} readily implies
\[
T_\ep \leq C_1 | \log \ep |,
\]
for some $C_1 > 0$, that is, $T_\ep = O(|\log \ep|)$ as stated independently in Theorem \ref{theoexistlp} as a consequence of Andronov-Leontovich's theorem.
\end{remark}

\section{The spectral stability problem}
\label{secspe}

\subsection{Perturbation equations and definition of spectrum}

To begin our stability analysis, let us consider a perturbation $v$ of a bounded periodic traveling wave $\varphi = \varphi(z)$ with fundamental period $T > 0$. Substituting $v + \varphi$ into the viscous balance law \eqref{vbl} written in the Galilean frame associated with the independent variables $(z,t) = (x-ct,t)$, one finds that the perturbation $v = v(z,t)$ is a solution to the nonlinear equation
\[
v_t - cv_z + f(v+\varphi)_z - f(\varphi)_z = v_{zz} + g(v+\varphi) - g(\varphi),
\]
where we have substituted the equation satisfied by the profile \eqref{profileq}. For nearby perturbations the leading approximation is given by the linearization of this equation around $\varphi$, yielding
\[
v_t = v_{zz} + (c - f'(\varphi))v_z + (g'(\varphi) - f'(\varphi)_z) v.
\]
Specializing to perturbations of the form $v(z,t) = e^{\lambda t} w(z)$, where $\lambda \in \C$ and $w$ lies in an appropriate Banach space $X$ we arrive at the eigenvalue problem
\begin{equation}
\label{primeraev}
\lambda w = w_{zz} + (c - f'(\varphi))w_z + (g'(\varphi) - f'(\varphi)_z) w,
\end{equation}
in which the complex growth rate appears as the eigenvalue. Motivated by the notion of spatially localized, finite energy perturbations in the Galilean coordinate frame in which the periodic wave is stationary, we consider $X = L^2(\R;\C)$ and define the linearized operator around the wave as
\begin{equation}
\label{linop}
\left\{
\begin{aligned}
\cL \, &: \, L^2(\R;\C) \longrightarrow L^2(\R;\C),\\
\cL \, &: = \, \partial_z^2 + a_1(z) \partial_z + a_0(z) \Id, 
\end{aligned}
\right.
\end{equation}
with dense domain $\cD(\cL) = H^2(\R;\C)$, and where the coefficients,
\begin{equation}
\label{defas}
\begin{aligned}
a_1(z) &:= c - f'(\varphi),\\
a_0(z) &:= g'(\varphi) - f'(\varphi)_z,
\end{aligned}
\end{equation}
are bounded and periodic, satisfying $a_j(z + T) = a_j(z)$ for all $z \in \R$, $j = 0,1$. Notice that $\cL$ is a densely defined, closed operator acting on $L^2$ with domain $\cD(\cL) = H^2(\R;\C)$. Hence, the eigenvalue problem \eqref{primeraev} is recast as
\begin{equation}
\label{specprobi}
\cL w = \lambda w,
\end{equation}
for some $\lambda \in \C$ and $w \in \cD(\cL) = H^2(\R;\C)$. The following definition is standard (cf. \cite{EE87,KaPro13,Kat80}).
\begin{definition}[resolvent and spectra]
Let $\cL : X \to Y$ be a closed linear operator, with $X, Y$ Banach spaces and dense domain $\cD(\cL) \subset X$. The \emph{resolvent} of $\cL$, denoted as $\rho(\cL)$, is the set of all complex numbers $\lambda \in \C$ such that $\cL - \lambda$ is injective and onto, and $(\cL - \lambda )^{-1}$ is a bounded operator. The \emph{point spectrum} of $\cL$, denoted as $\ptsp(\cL)$, is the set of $\lambda \in \C$ such that $\cL - \lambda$ is a Fredholm operator with index equal to zero and non-trivial kernel. The \emph{essential spectrum} of $\cL$, denoted as $\ess(\cL)$, is the set of all $\lambda \in \C$ such that either $\cL -\lambda$ is not Fredholm, or it is Fredholm with non-zero index. The \emph{spectrum} of $\cL$ is defined as $\sigma(\cL) = \ess(\cL) \cup \ptsp(\cL)$.
\end{definition}
\begin{remark}
Since $\cL$ is closed then $\rho(\cL) = \C \backslash \sigma(\cL)$. Moreover, $\ptsp(\cL)$ consists of isolated eigenvalues with finite multiplicity (cf. \cite{KaPro13,Kat80}). We denote the spectrum of an operator $\cL$ when computed with respect to the space $X$ as $\sigma(\cL)_{|X}$.
\end{remark}

\begin{definition}[spectral stability]
\label{defspectstab}
We say that a bounded periodic wave $\varphi$ is \emph{spectrally stable} as a solution to the viscous balance law \eqref{vbl} if the $L^2$-spectrum of the linearized operator around the wave defined in \eqref{linop} satisfies
\[
\sigma(\cL)_{|L^2} \cap \{\lambda \in \C \, : \, \Re \lambda > 0\} = \varnothing.
\]
Otherwise we say that it is \emph{spectrally unstable}.
\end{definition}

\subsection{Floquet characterization of the spectrum}

It is well-known that, as the coefficients of the operator $\cL$ in \eqref{linop} are periodic in $z$, Floquet theory implies that its $L^2$-spectrum is purely essential or ``continuous", that is, $\sigma(\cL)_{|L^2} = \ess(\cL)_{|L^2}$, which means that there are no isolated eigenvalues (see, e.g., Lemma 9 in \cite{DunSch2}, p. 1487, or Lemma 3.3 in \cite{JMMP2}). 

Moreover, it is possible to parametrize the spectrum in terms of Floquet multipliers of the form $e^{i\theta}\in \bbS^1$, or 
equivalently, by $\theta\in\mathbb{R}\pmod{2\pi}$. Let us define the set $\sigma_\theta$ as the set of
complex numbers $\lambda$ for which there exists a bounded, non-trivial solution $w \in L^\infty(\R;\C)$ to the boundary value problem
\begin{equation}
\label{spectw}
\left\{
\begin{aligned}
\lambda w &=  w_{zz} + \big(c - f'(\varphi)\big) w_z + \big(g'(\varphi) - f'(\varphi)_z\big) w,\\
w(T) &= e^{i\theta} w(0),\\
w_z(T) &= e^{i \theta} w_z(0),
\end{aligned}
\right.
\end{equation}
for some $\theta \in (-\pi,\pi]$. We thus define the 
\emph{Floquet  spectrum} $\sigma_F$ as:
\[
 \sigma_F := \!\!\bigcup_{-\pi<\theta \leq \pi}\sigma_\theta.
\]

\begin{lemma}[Floquet characterization of the spectrum] $\sigma(\cL)_{|L^2} = \sigma_F$.
\end{lemma}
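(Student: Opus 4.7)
The plan is to establish the two inclusions $\sigma_F \subseteq \sigma(\cL)_{|L^2}$ and $\sigma(\cL)_{|L^2} \subseteq \sigma_F$ separately, using a Weyl singular sequence argument for the first and a Floquet monodromy / exponential dichotomy argument for the second. In both steps it is convenient to recast the resolvent equation $(\cL-\lambda)w = h$ as a first-order system for $W = (w,w')^{\top}$ with a $T$-periodic coefficient matrix $A(z,\lambda)$.

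For $\sigma_F \subseteq \sigma(\cL)_{|L^2}$, suppose $\lambda \in \sigma_\theta$ and let $w \in L^\infty(\R;\C)$ be a non-trivial bounded solution of \eqref{spectw}. Since $a_0, a_1$ are $T$-periodic, uniqueness for the IVP forces the Floquet relation $w(z+T) = e^{i\theta}w(z)$ for all $z \in \R$, so $|w|$ is $T$-periodic, $w \notin L^2(\R;\C)$, and the same quasi-periodicity is inherited by $w'$, which is therefore uniformly bounded. Fix $\chi \in C_c^\infty(\R)$ with $\chi \equiv 1$ on $[-1,1]$ and $\mathrm{supp}\,\chi \subset [-2,2]$, and set $\chi_n(z) := \chi(z/(nT))$ and $w_n := \chi_n w / \|\chi_n w\|_{L^2}$. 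A direct computation using \eqref{linop} gives
\begin{equation*}
(\cL-\lambda)(\chi_n w) = \chi_n'' w + 2\chi_n' w' + a_1 \chi_n' w,
\end{equation*}
which is supported on the annular region $nT \leq |z| \leq 2nT$. Since $|\chi_n'| = O(1/n)$, $|\chi_n''| = O(1/n^2)$, and $w, w'$ are uniformly bounded, one obtains $\|(\cL-\lambda)(\chi_n w)\|_{L^2}^2 = O(1/n)$, while $\|\chi_n w\|_{L^2}^2 \geq 2n\int_0^T |w|^2\,dz$. Consequently $\|(\cL-\lambda)w_n\|_{L^2} \to 0$ with $\|w_n\|_{L^2}=1$, placing $\lambda$ in the approximate point spectrum of $\cL$, hence in $\sigma(\cL)_{|L^2}$.

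For the converse inclusion, equivalently $\C \setminus \sigma_F \subseteq \rho(\cL)$, let $\Phi(z,\lambda)$ denote the fundamental matrix of $W' = A(z,\lambda)W$ normalized by $\Phi(0,\lambda) = \Id$, and let $M(\lambda) := \Phi(T,\lambda)$ be the monodromy matrix. The assumption $\lambda \notin \sigma_F$ translates precisely into $M(\lambda)$ having no eigenvalue of modulus one, which by classical Floquet theory for periodic linear systems is equivalent to an exponential dichotomy on $\R$ with some projection $P(\lambda)$ and rates $K, \eta > 0$. The associated Green's function $G(z,s;\lambda)$ then decays pointwise like $K e^{-\eta|z-s|}$, and Young's inequality shows that the corresponding integral operator is a bounded map from $L^2(\R;\C)$ into $H^2(\R;\C)$ which inverts $\cL-\lambda$. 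Hence $\lambda \in \rho(\cL)$.

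The main obstacle will be the second direction: one must verify that the absence of unit-modulus Floquet multipliers yields a bona fide exponential dichotomy, that the Green's function has the claimed exponential decay with constants depending on $\lambda$ only through $\mathrm{dist}(\lambda, \sigma_F)$, and that the resolvent maps into $H^2$ (the extra regularity being supplied by the ODE itself). These facts are standard in the Floquet--Bloch theory of periodic second-order operators (see, e.g., \cite{KaPro13}) and could alternatively be obtained via the direct integral decomposition of $\cL$ over the Brillouin zone $\theta \in (-\pi,\pi]$.
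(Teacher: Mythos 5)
Your proof is correct. The paper offers no argument of its own, deferring entirely to Proposition~3.4 of \cite{JMMP2}; the two-pronged strategy you use---a Weyl singular sequence built from a smooth cutoff of the bounded Floquet solution for $\sigma_F\subseteq\sigma(\cL)_{|L^2}$, and a monodromy-matrix/exponential-dichotomy Green's function argument for the reverse inclusion---is exactly the standard route taken in that reference and throughout the Floquet--Bloch literature (see also \cite{Grd1,KaPro13}). Two small points are worth spelling out for full rigor, though neither is a gap: (i) $\chi_n w\in H^2(\R;\C)=\cD(\cL)$ because $w$ inherits $C^2$ regularity from the periodic coefficients via the ODE and $\chi_n w$ has compact support, so the singular sequence is admissible; (ii) ``no Floquet multiplier on the unit circle'' yields an exponential dichotomy whose projection $P(\lambda)$ may have rank $0$, $1$, or $2$ according to how many of the two multipliers lie strictly inside the unit disk, but in all three cases the associated Green's kernel decays like $Ke^{-\eta|z-s|}$, so the Young-inequality bound and the $H^2$ bootstrap from the first-order system close the argument unchanged.
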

\begin{proof}
See the proof of Proposition 3.4 in \cite{JMMP2}.
\end{proof}

In this fashion, the purely essential spectrum $\sigma(\cL)_{|L^2}$ can be written as the union of partial spectral $\sigma_\theta$. Moreover, each set $\sigma_\theta$ is \emph{discrete} as it is the zero set of an analytic function (see \cite{JMMP2}, p. 4649). It is to be noticed that if $\theta = 0$ then the boundary conditions in \eqref{spectw} become periodic and $\sigma_0$ detects perturbations 
which are co-periodic. By a symmetric argument, the set $\sigma_{\pi}$ detects anti-periodic perturbations.

To remove the $\theta$-dependence associated with the boundary conditions in \eqref{spectw} one can pose the problem in a proper periodic space independently of (but indexed by) $\theta \in (-\pi, \pi]$ by means of a \emph{Bloch-wave decomposition}. Define
\[
u(z):= e^{- i\theta z/T} w(z).
\]
Then the non-separated boundary conditions in \eqref{spectw} transform into periodic ones, $\partial_z^j u(T) = \partial_z^j u(0)$, $j = 0,1$, and the spectral problem \eqref{spectw} is recast as 
\[
\cL_\theta u = \lambda u,
\]
for a one-parameter family of Bloch operators
\[
\left\{
\begin{aligned}
\cL_\theta &:= (\partial_z + i\theta/T)^2 + a_1(z) (\partial_z + i \theta/T) + a_0(z) \Id,\\
\cL_\theta &: \Ldper([0,T];\C) \to \Ldper([0,T];\C),
\end{aligned}
\right.
\]
with domain $\cD(\cL_\theta) = \Hdper([0,T]; \C)$, parametrized by $\theta \in (-\pi,\pi]$. Since the family has compactly embedded domains in $\Ldper([0,T];\C)$ then their spectrum consists entirely of isolated eigenvalues, $\sigma(\cL_\theta)_{|\Ldper} = \ptsp(\cL_\theta)_{|\Ldper} $. Moreover, they depend continuously on the Bloch parameter $\theta$, which is typically a local coordinate for the spectrum $\sigma(\cL)_{|L^2}$, explaining the intuition that the former is purely ``continuous" and consisting of curves of spectrum in the complex plane (see Proposition 3.7 in \cite{JMMP2}) meaning that $\lambda \in \sigma(\cL)_{|L^2}$ if and only if $\lambda \in \ptsp(\cL_\theta)_{\Ldper}$ for some $\theta \in (-\pi,\pi]$. Consequently, we also have the spectral representation (see \cite{Grd1,KaPro13} for details),
\[
\sigma(\cL)_{|L^2} =  \!\!\bigcup_{-\pi<\theta \leq \pi}\ptsp(\cL_\theta)_{|\Ldper}.
\]

\subsection{The spectral problem as a first order system. Evans functions.}

Finally, following the seminal work of Alexander, Gardner and Jones \cite{AGJ90} (see also \cite{San02, KaPro13}), one can write the spectral problem \eqref{specprobi} for any second order differential linearized operator $\cL$ around a traveling wave $\varphi$ of the form \eqref{linop} and with bounded coefficients (not necessarily periodic) as a first order system, namely,
\begin{equation}
\label{firstordersyst}
W_z = \A(z,\lambda) W,
\end{equation}
where
\[
W = \begin{pmatrix} w \\ w_z \end{pmatrix} \in H^2(\R;\C) \times H^1(\R;\C),
\]
and with coefficients
\[
\A(z,\lambda) := \begin{pmatrix} 0 & 1 \\ \lambda - a_0(z) & - a_1(z)
\end{pmatrix} = \begin{pmatrix}
0 & 1 \\ \lambda - (g'(\varphi) - f'(\varphi)_z) & -c + f'(\varphi)
\end{pmatrix},
\]
which are analytic in $\lambda \in \C$ and bounded functions of $z \in \R$ of class $C^1(\R;\C^{2 \times 2})$. Then one can consider the family of closed, densely defined operators $\cT(\lambda)$ given by
\[
\left\{
\begin{aligned}
\cT(\lambda) W &:= W_z - \A(z,\lambda) W,\\
\cT(\lambda) \, &: \, L^2(\R;\C) \times L^2(\R;\C)  \to L^2(\R;\C) \times L^2(\R;\C),
\end{aligned}
\right.
\]
with dense domain $\cD(\cT(\lambda)) = H^2(\R;\C) \times H^1(\R;\C)$ and parametrized by $\lambda \in \C$. It is well-known that the set of complex numbers such that $\cT(\lambda)$ is Fredholm with index zero coincides in location and multiplicity with the set $\ptsp(\cL)_{|L^2}$ of isolated eigenvalues of $\cL$. Similar definitions an equivalences apply to the resolvent and the essential spectrum (see Definition 3.1 in \cite{JMMP2}, as well as \cite{AGJ90,San02,KaPro13} for further details). Another advantage is that we can relate the spectrum of the linearized operator to dynamical systems tools associated to \eqref{firstordersyst} such as Evans functions.

\subsubsection{The homoclinic Evans function}

Let us suppose that the traveling wave under consideration, $\varphi = \varphi(z)$, is a traveling pulse (or homoclinic orbit for system \eqref{firstos}) with finite limits $\lim_{z \to \pm \infty} \varphi(z) = u_\infty$ and $\lim_{z \to \pm \infty} \varphi_z(z) = 0$ (in our case with system \eqref{firstos} the homoclinic orbit is relative to the hyperbolic saddle $P_1 = (1,0)$, so that $u_\infty = 1$). We can then define the asymptotic coefficients $\A_\infty(\lambda) := \lim_{z \to \pm \infty} \A(z,\lambda)$ and a finite set of dispersion curves in the complex plane, $\lambda_j(\xi)$, $1 \leq j \leq m$, $\xi \in \R$, determined by $\det (\A_\infty(\lambda) - i \xi \I) = 0$, where $\I$ denotes the identity matrix. By definition and continuity of the coefficients, for values of $\lambda$ away from these curves the eigenvalues of $\A_\infty(\lambda)$ have non-vanishing real part (domain of hyperbolicity, in the sense of dynamical systems, of the coefficients $\A_\infty$). In fact, and thanks to hypotheses \eqref{H3}, it can be shown that on the open set $\Omega_\infty = \{\lambda \in \C \, : \, \Re \lambda > g'(1)\}$ (the set of consistent splitting; see \eqref{defOmegai} below), the coefficients $\A_\infty(\lambda)$ have exactly one eigenvalue with positive real part, exactly one with negative real part, and that $\Omega_\infty$ contains the unstable complex half plane $\C_+ = \{ \Re \lambda > 0\}$. More importantly, due to exponential decay of the traveling pulse, exponential dichotomies theory and invariance of Morse indices, the conclusion can be extrapolated to the variable coefficients system \eqref{firstordersyst}: there exists one solution $W^+(z,\lambda)$ spanning the stable space of \eqref{firstordersyst} decaying as $z \to +\infty$, and one solution $W^-(z,\lambda)$ decaying as $z \to -\infty$ for each $\lambda \in \Omega_\infty$. The homoclinic Evans function is defined as the Wronskian
\begin{equation}
\label{defhomEv}
D(\lambda) := \det ( W^-(z,\lambda), \, W^+(z,\lambda))_{| z=0},
\end{equation}
for $\lambda \in \Omega_\infty$. The Evans function is not unique but they all differ by appropriate vanishing factors and are endowed with the following properties: $D$ is analytic on $\Omega_\infty$ and vanishes at $\lambda \in \Omega_\infty$ if and only if $\lambda \in \ptsp(\cL)_{|L^2}$, with the order of the zero being the algebraic multiplicity of the eigenvalue (see \cite{KaPro13,San02} and the many references therein).

\subsubsection{The periodic Evans function}

Let us now go back to the case of a periodic traveling wave $\varphi = \varphi(z)$ with fundamental period $T$. In such case, the matrix $\A(z,\lambda)$ is $T$-periodic in $z$ and we may apply Floquet theory for ODEs. Let $\F = \F(z,\lambda)$ denote the identity-normalized fundamental solution matrix for system \eqref{firstordersyst}, that is, the unique solution to $\partial_z \F = \A(z,\lambda) \F$ with initial condition $\F(0,\lambda) = \I$ for every $\lambda \in \C$. The $T$-periodicity in $z$ of the coefficients $\A$ then implies that $\F(z+T,\lambda) = \F(z,\lambda) \M(\lambda)$ for all $z \in R$, where $\M(\lambda) := \F(T,\lambda)$ is the \emph{monodromy matrix} for system \eqref{firstordersyst} and it is an entire function of $\lambda \in \C$ (see, e.g., \cite{JMMP2,JMMP3,KaPro13}). It can be shown (see Proposition 3.4 in \cite{JMMP2}) that $\lambda \in \sigma(\cL)_{|L^2}$ if and only if there exists $\mu \in \C$ with $|\mu| = 1$ such that
\[
\det (\M(\lambda) - \mu \I) = 0.
\]
This is, at least one of the eigenvalues of the monodromy matrix, also known as \emph{Floquet multipliers}, lies in complex unit circle. Gardner \cite{Grd1,Grd2} defines the \emph{periodic Evans function} as the restriction of the above determinant to $\mu$ in the unit circle $\bbS^1 \subset \C$,
\begin{equation}
\label{defperEv}
D(\lambda,\theta) := \det (\M(\lambda) - e^{i\theta} \I).
\end{equation}
For each $\theta \in \R$ (mod $2 \pi$), the periodic Evans function is an entire function of $\lambda \in \C$ whose isolated zeroes are particular points of the (continuous) spectrum $\lambda \in \sigma(\cL)_{|L^2}$. Of course, each $\theta \in (-\pi,\pi]$ is precisely the Bloch parameter in \eqref{spectw} associated to a Floquet multiplier of the form $e^{i\theta}$. For each $\theta$ fixed, the zeroes of the analytic function $D(\lambda,\theta)$ are discrete and coincide in order (multiplicity) and location with the discrete Bloch spectrum, $\ptsp(\cL_\theta)_{|\Ldper}$. When the periodic wave under consideration is a large period wave emerging from a homoclinic bifurcation (like the ones described in Theorem \ref{thmexlarge}), the homoclinic Evans function associated to the traveling pulse and the periodic Evans function of the family are closely related to each other (see \cite{Grd2,SS6,YngZ19,Z16}).

\section{Spectral instability of small-amplitude waves}
\label{secsmall}

This section is devoted to prove Theorem \ref{theosmalli}. The main idea is that the spectral stability of small amplitude waves can be studied as a perturbation of the zero-amplitude case, that is, from a dispersion relation of the PDE linearized around the zero solution. (For a related analysis in a scalar Hamiltonian context, see K\'{o}llar \emph{et al.} \cite{KDT19}.)

\subsection{Overview of spectral perturbation theory}

For convenience of the reader, in this section we review the basic perturbation theory for a linear family of operators of the form $\cL(\vep) = \cL^0 + \vep \cA$. We describe a simple criterion in the case of $\cL^0$ self-adjoint to establish when an eigenvale $\lambda_0$ of $\cL^0$ persists for $\vep \neq 0$ and small, as described in the book by Hislop and Sigal \cite{HiSi96} (chapter 15). For the more general theory the reader is referred to Kato \cite{Kat80}. We first recall some basic definitions.

\begin{definition}
Let $\cA, \cS : X \to Y$ be linear operators with $X, Y$ Banach spaces. We say that $\cA$ is relatively bounded with respect to $\cS$, or simply $\cS$-bounded, provided that $\cD(\cS) \subset \cD(\cA)$ and that there exist $\alpha, \beta \geq 0$ such that
\[
\| \cA u \| \leq \alpha \| u \| + \beta \| \cS u \|,
\]
for all $u \in \cD(\cS)$.
\end{definition}

\begin{definition}
Let $\cL : X \to Y$ be a closed operator with $X, Y$ Banach spaces. Suppose $\Gamma \subset \rho(\cL)$ is a closed rectifiable contour around a discrete eigenvalue of $\cL$, $\lambda_0 \in \ptsp(\cL)$. The Riesz projection for $\cL$ and $\lambda_0$ is defined as
\[
\cP = \frac{1}{2 \pi i} \oint_{\Gamma} (\cL - \lambda)^{-1} \, d\lambda.
\] 
The algebraic multiplicity of $\lambda_0$ is the dimension of the range of $\cP$, $\overline{m}(\lambda_0) = \dim R(\cP)$, whereas the geometric multiplicity of $\lambda_0$ is the nullity of $\cL - \lambda_0$, $\underline{m}(\lambda_0) = \dim \ker (\cL - \lambda_0)$. Clearly $\underline{m}(\lambda_0) \leq \overline{m}(\lambda_0)$.
\end{definition}

Consider a family of operators 
\begin{equation}
\label{familia}
\cL(\vep) = \cL^0 + \vep \cA,
\end{equation}
defined on a Hilbert space $H$ such that $\cD(\cL^0) \subset \cD(\cA) \subset H$, so that $\cL(\vep) : \cD(\cL^0) \subset H \to H$ for all $\vep$ small.

\begin{definition}
A discrete eigenvalue $\lambda_0 \in \ptsp(\cL^0)$ is \emph{stable with respect to the family} $\cL(\vep)$ if
\begin{itemize}
\item[(i)] there exists $r > 0$ such that $\Gamma_r = \{ \lambda \in \C \, :  \, |\lambda - \lambda_0| = r  \} \subset \rho(\cL(\vep))$ for all small $|\vep| \ll 1$, and
\item[(ii)] if $\cP_\vep$ denotes the Riesz projection for $\cL(\vep)$ and $\lambda_0$ corresponding to the contour $\Gamma_r$ then $\cP_\vep \to \cP_0$ in norm as $\vep \to 0$.
\end{itemize}
\end{definition}

The following proposition provides a simple criterion for the persistence of a discrete eigenvalue $\lambda_0$ of $\cL^0$ under the family $\cL(\vep)$ in the particular case when $\cL^0$ is self-adjoint.

\begin{proposition}
\label{proppert}
Suppose that $\cL^0$ is a self adjoint operator and that $\cA$ is $\cL^0$-bounded. Then all discrete eigenvalues of $\cL^0$ are stable with respect to the family $\cL(\vep)$. Moreover, for all $|\vep|$ sufficiently small the operator $\cL(\vep)$ has discrete eigenvalues $\lambda_j(\vep)$ in a neighborhood of $\lambda_0$ of total algebraic multiplicities equal to the algebraic multiplicity of $\lambda_0$. Each eigenvalue $\lambda_j(\vep)$ admits an analytic series expansion (or Rayleigh-Schr\"odinger expansion) of the form
\[
\lambda_j(\vep) = \lambda_0 + \sum_{k=1}^\infty \alpha_k^j \vep^k,
\]
for some $\alpha_k^j \in \C$ with non-zero radius of convergence.
\end{proposition}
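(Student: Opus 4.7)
The plan is to invoke classical analytic perturbation theory of Kato, in the streamlined self-adjoint form presented in Hislop-Sigal. Three ingredients are needed: uniform control of $(\cL^0-\lambda)^{-1}$ on a small contour encircling $\lambda_0$, a Neumann-series argument transferring this control to $\cL(\vep)$, and a finite-dimensional reduction yielding convergent expansions of the perturbed eigenvalues. First, since $\cL^0$ is self-adjoint, $\sigma(\cL^0)\subset\R$ and $\lambda_0$ is isolated in $\sigma(\cL^0)$; pick $r>0$ so that $\Gamma_r:=\{\lambda\in\C:|\lambda-\lambda_0|=r\}$ separates $\lambda_0$ from the rest of $\sigma(\cL^0)$. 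The self-adjoint estimate $\|(\cL^0-\lambda)^{-1}\|\leq 1/\mathrm{dist}(\lambda,\sigma(\cL^0))$ yields a uniform bound of $(\cL^0-\lambda)^{-1}$ on $\Gamma_r$. Combining this with $\cL^0(\cL^0-\lambda)^{-1}=\Id+\lambda(\cL^0-\lambda)^{-1}$ and the $\cL^0$-boundedness estimate produces $M:=\sup_{\lambda\in\Gamma_r}\|\cA(\cL^0-\lambda)^{-1}\|<\infty$.

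For $|\vep|<1/M$, the second resolvent identity
\[
(\cL(\vep)-\lambda)^{-1} = (\cL^0-\lambda)^{-1}\bigl(\Id + \vep\,\cA(\cL^0-\lambda)^{-1}\bigr)^{-1}
\]
shows that $\Gamma_r\subset\rho(\cL(\vep))$, and expanding the inverse as a norm-convergent Neumann series produces a representation of $(\cL(\vep)-\lambda)^{-1}$ as a power series in $\vep$ with $\lambda$-analytic coefficients on $\Gamma_r$. Integrating along $\Gamma_r$ defines $\cP_\vep$ and gives $\|\cP_\vep-\cP_0\|=O(\vep)$, so $\lambda_0$ is stable with respect to $\cL(\vep)$. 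Since the rank of a norm-continuous family of projections is locally constant, $\dim R(\cP_\vep)=\dim R(\cP_0)=\overline m(\lambda_0)$ for all small $\vep$, and the eigenvalues of $\cL(\vep)$ inside $\Gamma_r$ count, with multiplicity, to $\overline m(\lambda_0)$.

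Finally, to produce the Rayleigh-Schr\"odinger expansion I would introduce Kato's analytic transformation function $\mathcal{U}(\vep)$ mapping $R(\cP_0)$ onto $R(\cP_\vep)$, so that $\widetilde\cL(\vep):=\mathcal{U}(\vep)^{-1}\cL(\vep)\mathcal{U}(\vep)\big|_{R(\cP_0)}$ is a holomorphic family of $\overline m(\lambda_0)\times\overline m(\lambda_0)$ matrices with $\widetilde\cL(0)=\lambda_0\,\Id$, using that $\lambda_0$ is semisimple (automatic for a discrete eigenvalue of a self-adjoint operator). The eigenvalues of $\widetilde\cL(\vep)$ are precisely the sought $\lambda_j(\vep)$. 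Applying the standard result on eigenvalue branches of a holomorphic matrix family starting from a semisimple eigenvalue (cf. Kato \cite{Kat80}, Thm.~II.1.8 and Hislop-Sigal \cite{HiSi96}, Ch.~15), each branch admits a convergent expansion $\lambda_j(\vep)=\lambda_0+\sum_{k\geq 1}\alpha_k^j\,\vep^k$, whose coefficients are extracted by matching powers of $\vep$ in the formal eigenvalue equation against the Neumann expansion of the resolvent.

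The main obstacle is the last step: eigenvalue branches of a general holomorphic matrix family admit only Puiseux (fractional-power) expansions at a confluence point, and genuine analyticity requires extra structure. Here semisimplicity of $\lambda_0$ (guaranteed by self-adjointness of $\cL^0$) is exactly what is needed: it reduces the problem to analyzing $\widetilde\cL(\vep)=\lambda_0\,\Id+\vep\,\widetilde\cA(\vep)$ with $\widetilde\cA$ holomorphic, which combined with the analytic dependence of $\mathcal{U}(\vep)$ allows the branches to be organized as single-valued, analytic functions of $\vep$. In the simple-eigenvalue case $\overline m(\lambda_0)=1$ analyticity is immediate from the implicit function theorem applied to $\det(\widetilde\cL(\vep)-\lambda)=0$.
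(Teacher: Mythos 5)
The paper's proof of this proposition is a bare citation to Hislop and Sigal \cite{HiSi96}, so what you have written is a reconstruction of the argument underlying that reference. The first portion is correct and corresponds to Proposition~15.3 there: the uniform resolvent bound on $\Gamma_r$ via self-adjointness of $\cL^0$, the finite relative bound $M=\sup_{\Gamma_r}\|\cA(\cL^0-\lambda)^{-1}\|$ from $\cL^0$-boundedness, the Neumann-series inversion showing $\Gamma_r\subset\rho(\cL(\vep))$ for $|\vep|<1/M$, the norm convergence $\cP_\vep\to\cP_0$, and the rank-continuity argument giving preservation of total algebraic multiplicity.

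The final step, however, contains a genuine gap concerning the analyticity claim when $\overline{m}(\lambda_0)>1$. You assert that semisimplicity of $\lambda_0$, together with the reduction $\widetilde\cL(\vep)=\lambda_0\,\Id+\vep\,\widetilde\cA(\vep)$, forces the eigenvalue branches to be single-valued analytic functions of $\vep$. This is not correct: semisimplicity only guarantees that the $\lambda_0$-group has no fractional powers at \emph{first} order, i.e.\ $\lambda_j(\vep)=\lambda_0+\vep\nu_j+o(\vep)$; but the eigenvalues of the holomorphic family $\widetilde\cA(\vep)$ can themselves have branch points at $\vep=0$ when $\widetilde\cA(0)=\cP_0\cA\cP_0$ has repeated eigenvalues, producing Puiseux behavior at higher order. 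A concrete counterexample is $\cL^0=\mathrm{diag}(0,0,1)$ on $\C^3$ with
\[
\cA=\begin{pmatrix} 0 & 1 & 0 \\ 0 & 0 & 1 \\ 1 & 0 & 0 \end{pmatrix},
\]
for which $\lambda_0=0$ is a semisimple eigenvalue of the self-adjoint $\cL^0$ of multiplicity two, yet the characteristic polynomial of $\cL(\vep)=\cL^0+\vep\cA$ is $\lambda^3-\lambda^2-\vep^3$, and the two branches of the $\lambda_0$-group behave as $\lambda(\vep)\sim\pm i\,\vep^{3/2}$, which is not analytic in $\vep$. The convergent Rayleigh--Schr\"odinger series is guaranteed either when $\lambda_0$ is simple (which is what Hislop--Sigal's Theorem~15.7 and formulae (15.6)--(15.8) actually treat, and also the only case the present paper uses: $\widetilde\lambda_0=4\pi^2$ is a simple eigenvalue of $\cL_0^0$, the kernel of $\partial_y^2$ in $\Ldper([0,\pi];\C)$ being spanned by constants) or when the whole family $\cL(\vep)$ is self-adjoint for real $\vep$ (Rellich's theorem), and neither of these is what your step (c) invokes. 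Your argument therefore does not establish the proposition at the generality stated, although the gap is harmless for the paper's downstream application to Lemma~\ref{lemunev}.
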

\begin{proof}
See Proposition 15.3, Theorem 15.7 and formulae (15.6) - (15.8) in Hislop and Sigal \cite{HiSi96} (chapter 15, pp. 149--157).
\end{proof}

\begin{remark}
It is important to observe that $\cA$ does not need to be self-adjoint (not even symmetric). Proposition \ref{proppert} guarantees that the eigenvalue $\lambda_0$ splits into discrete eigenvalues $\lambda_j(\vep)$ of $\cL(\vep)$ with same total multiplicity in a $\vep$-neighborhood of $\lambda_0$.
\end{remark}

\subsection{Emergence of unstable eigenvalues: proof of Theorem \ref{theosmalli}}

Let us consider the family of periodic, small-amplitude waves from Theorem \ref{thmexbded} which are parametrized by $\ep := |c - c_0| \in (0, \ep_0)$, where $c_0 = f'(0)$ and $c = c(\ep)$ is the wave speed of each element of the family. These waves have amplitude of order
\[
|\varphi^\ep|, |\varphi^\ep_z| = O(\sqrt{\ep}),
\]
and fundamental period
\[
T_\ep 
= \frac{2 \pi}{\sqrt{g'(0)}} + O(\ep) =: T_0 + O(\ep).
\]
The associated spectral problem \eqref{spectw} 
\begin{equation}
\label{probw}
\left\{
\begin{aligned}
\lambda w &= w_{zz} + \big(c(\ep) - f'(\varphi^\ep)\big) w_z + \big(g'(\varphi^\ep) - f'(\varphi^\ep)_z\big) w,\\
w(T_\ep) &= e^{i\theta} w(0),\\
w_z(T_\ep) &= e^{i \theta} w_z(0),  \qquad \text{some } \, \theta \in (-\pi, \pi],
\end{aligned}
\right.
\end{equation}
can be recast as an equivalent spectral problem in a periodic space. Consider the following Bloch transformation
\[
y:= \frac{\pi z}{T_\ep}, \quad u(y) := e^{-i \theta y/\pi} w \Big( \frac{T_\ep y}{\pi} \Big),
\]
for given $\theta \in (-\pi, \pi]$. Then the spectral problem \eqref{probw} transforms into
\[
\lambda u = \frac{1}{T_\ep^2} \big(i\theta + \pi \partial_y\big)^2 u + \frac{\bar{a}^\ep_1(y)}{T_\ep} \big(i\theta + \pi \partial_y\big) u + \bar{a}^\ep_1(y) u,
\]
where the coefficients
\[
\label{coeffsaep}
\begin{aligned}
\bar{a}^\ep_1(y) &:= c(\ep) - f'( \varphi^\ep( T_\ep y/ \pi)), \\
\bar{a}^\ep_0(y) &:= g'( \varphi^\ep( T_\ep y/ \pi)) - f'' ( \varphi^\ep( T_\ep y/ \pi)) \varphi_z^\ep( T_\ep y/ \pi ),
\end{aligned}
\]
are clearly $\pi$-periodic in the $y$ variable and where $u \in \Hdper([0,\pi];\C)$ is subject to $\pi$-periodic boundary conditions,
\[
u(0) = u(\pi), \quad u_y(0) = u_y(\pi).
\]
Multiply by $T_\ep^2$ (constant) to obtain the following equivalent spectral problem
\begin{equation}
\label{newL}
\cL_\theta u = \widetilde{\lambda} u,
\end{equation}
for the operator
\[
\left\{
\begin{aligned}
\cL_\theta  &:= \big(i\theta + \pi \partial_y\big)^2  + a^\ep_1(y) \big(i\theta + \pi \partial_y \big)  + a^\ep_0(y) \Id,\\
\cL_\theta  &: \cD(\cL_\theta) = \Hdper([0,\pi];\C) \subset \Ldper([0,\pi];\C) \longrightarrow \Ldper([0,\pi];\C), \\
\end{aligned}
\right.
\]
for any given $\theta \in (-\pi, \pi]$ and where
\[
\begin{aligned}
\widetilde{\lambda} &:= T_\ep^2 \, \lambda,\\
a^\ep_1(y) &:= T_\ep \, \bar{a}^\ep_1(y),\\
a^\ep_0(y) &:= T_\ep^2 \, \bar{a}^\ep_0(y).
\end{aligned}
\]
Let us write \eqref{newL} as a perturbation problem. The coefficients can be written as
\[
\begin{aligned}
a^\ep_1(y) &= \big( T_0 + O(\ep) \big)\big( c(\ep) - f'( \varphi^\ep( T_\ep y/ \pi)) \big)\\
&= \big( T_0 + O(\ep) \big)\big( c_0 + O(\ep) - f'(0) + O(| \varphi^\ep|) \big)\\
&= \big( T_0 + O(\ep) \big)\big( O(\ep) + O(\sqrt{\ep}) \big)\\
&= \sqrt{\ep} \, b_1(y),
\end{aligned}
\]
where
\[
b_1(y) := \frac{1}{\sqrt{\ep}} \, a^\ep_1(y) = O(1), \quad y \in [0,\pi].
\]
Likewise
\[
\begin{aligned}
a^\ep_0(y) &= \big( T_0 + O(\ep) \big)^2 \Big( g'( \varphi^\ep( T_\ep y/ \pi)) - f''(\varphi^\ep( T_\ep y/ \pi)) \varphi^\ep_z ( T_\ep y/ \pi)\Big)\\
&= \big( T_0^2 + O(\ep) \big) \big( g'(0) + O(|\varphi^\ep|) + O(|\varphi^\ep_z|) )\big)\\
&= \big( T_0^2 + O(\ep) \big)\big( g'(0) + O(\sqrt{\ep}) \big)\\
&= T_0^2 g'(0) + O(\sqrt{\ep})\\
&= 4\pi^2 + O(\sqrt{\ep}).
\end{aligned}
\]
Thus we write
\[
 b_0(y) := \frac{a_0^\ep(y) - 4\pi^2}{\sqrt{\ep}} = O(1), \quad y \in [0,\pi].
\]
Now, if we denote $\vep := \sqrt{\ep} \in (0, \sqrt{\ep_0})$ we obtain 
\[
\cL_\theta u = \big(i\theta + \pi \partial_y\big)^2 u + 4 \pi^2 u + \vep b_1(y) \big(i\theta + \pi \partial_y\big) u + \vep b_0(y) u = \cL_\theta^0 u + \vep \cL_\theta^1 u,
\]
where the operators $\cL_\theta^0$ and $\cL_\theta^1$ are defined as
\[
\left\{
\begin{aligned}
\cL_\theta^0 &:= \big(i\theta + \pi \partial_y\big)^2 + 4 \pi^2 \Id,\\
\cL_\theta^0 &: \cD(\cL_\theta^0) = \Hdper([0,\pi],\C) \subset \Ldper([0,\pi],\C) \longrightarrow \Ldper([0,\pi],\C),
\end{aligned}
\right.
\]
and as
\[
\left\{
\begin{aligned}
\cL_\theta^1 &:= b_1(y) \big(i\theta + \pi \partial_y\big) + b_0(y) \Id,\\
\cL_\theta^1 &: \cD(\cL_\theta^1) = \Huper([0,\pi],\C) \subset \Ldper([0,\pi],\C) \longrightarrow \Ldper([0,\pi],\C),
\end{aligned}
\right.
\]
respectively. Note that here $b_j(y) = O(1)$, $y \in [0,\pi]$, $j=0,1$. Therefore, the spectral problem \eqref{newL} is recast as a perturbed spectral problem of the form
\begin{equation}
\label{pertevL}
\cL_\theta u = \cL_\theta^0 u + \vep \cL_\theta^1 u = \widetilde{\lambda} u, \qquad u \in \Hdper([0,\pi],\C).
\end{equation}
Here the densely defined operators $\cL_\theta^l$, $l = 0,1$,  act on $\Ldper([0,\pi];\C)$ with norm
\[
\| u \|_{\Ldper} = \left( \int_0^\pi |u(z)|^2 \, dz \right)^{1/2}.
\]
\begin{lemma}
\label{L0bded}
For each $\theta \in (-\pi,\pi]$, $\cL_\theta^1$ is $\cL_\theta^0$-bounded.
\end{lemma}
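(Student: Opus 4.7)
The plan is to reduce the $\cL_\theta^0$-boundedness of $\cL_\theta^1$ to a standard interpolation inequality between $\|u_y\|_{\Ldper}$, $\|u_{yy}\|_{\Ldper}$ and $\|u\|_{\Ldper}$ on $\Hdper([0,\pi],\C)$, together with the boundedness of the coefficients $b_0$ and $b_1$. First I would note that, because $b_j \in L^\infty([0,\pi])$ for $j=0,1$ (both coefficients are $O(1)$ as functions of $y\in[0,\pi]$ by construction), the triangle inequality yields
\[
\|\cL_\theta^1 u\|_{\Ldper} \;\leq\; \|b_1\|_\infty\bigl(|\theta|\,\|u\|_{\Ldper} + \pi\,\|u_y\|_{\Ldper}\bigr) + \|b_0\|_\infty\,\|u\|_{\Ldper},
\]
for every $u\in \Hdper([0,\pi],\C)\subset \Huper([0,\pi],\C) = \cD(\cL_\theta^1)$. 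Hence the problem reduces to dominating $\|u_y\|_{\Ldper}$ by $\|u\|_{\Ldper}$ and $\|\cL_\theta^0 u\|_{\Ldper}$.

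Next I would expand the principal operator directly as
\[
\cL_\theta^0 u \;=\; \pi^2 u_{yy} + 2\pi i\theta\, u_y + (4\pi^2 - \theta^2)\,u,
\]
so that, solving for $u_{yy}$ and applying the triangle inequality,
\[
\pi^2\,\|u_{yy}\|_{\Ldper} \;\leq\; \|\cL_\theta^0 u\|_{\Ldper} + 2\pi|\theta|\,\|u_y\|_{\Ldper} + |4\pi^2-\theta^2|\,\|u\|_{\Ldper}.
\]
Then I would invoke the standard periodic interpolation inequality: for every $\delta>0$ there exists $C_\delta>0$ such that
\[
\|u_y\|_{\Ldper} \;\leq\; \delta\,\|u_{yy}\|_{\Ldper} + C_\delta\,\|u\|_{\Ldper}, \qquad u\in \Hdper([0,\pi],\C).
\]
This is immediate from Plancherel on $[0,\pi]$, since for every $k\in\Z$ the Fourier symbols satisfy $|2k|\leq \delta(2k)^2 + 1/(4\delta)$.

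Combining these two estimates yields
\[
\|u_y\|_{\Ldper} \;\leq\; \frac{\delta}{\pi^2}\Bigl(\|\cL_\theta^0 u\|_{\Ldper} + 2\pi|\theta|\,\|u_y\|_{\Ldper} + |4\pi^2-\theta^2|\,\|u\|_{\Ldper}\Bigr) + C_\delta\,\|u\|_{\Ldper}.
\]
Choosing $\delta>0$ small enough that $2\delta|\theta|/\pi < 1/2$, I can absorb the $\|u_y\|_{\Ldper}$ term on the right into the left-hand side, obtaining an estimate of the form $\|u_y\|_{\Ldper}\leq A\|\cL_\theta^0 u\|_{\Ldper} + B\|u\|_{\Ldper}$ for suitable constants $A,B\geq 0$ depending on $\theta$. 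Substituting back into the first displayed inequality then produces constants $\alpha,\beta\geq 0$ with $\|\cL_\theta^1 u\|_{\Ldper}\leq \alpha\|u\|_{\Ldper}+\beta\|\cL_\theta^0 u\|_{\Ldper}$ for all $u\in \cD(\cL_\theta^0)$, which is the required relative boundedness.

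There is no serious obstacle here, since the proof is essentially an elliptic regularity type estimate for a second-order operator with constant principal symbol on a compact interval with periodic boundary conditions; the mild subtlety is simply keeping track of the $\theta$-dependent constants and making sure the interpolation inequality is applied in the periodic Sobolev setting, which is justified by Fourier series on $[0,\pi]$.
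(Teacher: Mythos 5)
Your proof is correct and follows essentially the same route as the paper: bound $\|\cL^1_\theta u\|_{\Ldper}$ by $\|u_y\|_{\Ldper}$ and $\|u\|_{\Ldper}$ using the $L^\infty$ bounds on $b_0,b_1$, invoke an interpolation inequality $\|u_y\| \leq \delta\|u_{yy}\| + C_\delta\|u\|$, and absorb the $\|u_{yy}\|$ term via the lower bound $\pi^2\|u_{yy}\| \leq \|\cL^0_\theta u\| + 2\pi|\theta|\|u_y\| + |4\pi^2-\theta^2|\|u\|$. The only cosmetic difference is the source of the interpolation inequality: the paper cites Kato's explicit inequality with a free parameter $N>1$, whereas you use the Fourier-series version on the periodic domain, which is cleaner here; note only that the symbol bound should be applied at the squared level (or via Cauchy--Schwarz plus Young, $\|u_y\|^2 \leq \|u_{yy}\|\,\|u\|$), since the first-power inequality $|2k|\leq\delta(2k)^2 + \tfrac{1}{4\delta}$ does not literally sum to the $L^2$ estimate as written, though the intended and correct inequality is of course standard.
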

\begin{proof}
First, notice that $\cD(\cL_\theta^0) = \Hdper([0,\pi],\C) \subset \cD(\cL_\theta^1) = \Huper([0,\pi],\C)$. We need to show that there exist uniform constants $\alpha, \beta \geq 0$ such that
\[
\| \cL_\theta^1 u \|_{\Ldper} \leq \alpha \|u \|_{\Ldper} + \beta \| \cL_\theta^0 u \|_{\Ldper},
\]
for all $u \in \Hdper([0,\pi];\C)$. Let us estimate
\begin{equation}
\label{nvak1}
\begin{aligned}
\| \cL_\theta^1 u \|_{\Ldper} &= \| b_1(y) \big( i \theta + \partial_y \big) u + b_0(y) u \|_{\Ldper} \\
&\leq \pi \| b_1(y) \|_{L^\infty} \| u_y \|_{\Ldper} + \Big[ |\theta| \|b_1(y) \|_{L^\infty} + \|b_0(y) \|_{L^\infty} \Big] \| u \|_{\Ldper}\\
&\leq \pi K_1 \| u_y \|_{\Ldper} + (\pi K_1 + K_0) \| u \|_{\Ldper},
\end{aligned}
\end{equation}
since $|\theta| \leq \pi$ and where $0 < K_1 := \|b_1 \|_{L^\infty}$, $0 < K_0 := \|b_0 \|_{L^\infty}$. Now, it is known (see Kato \cite{Kat80}, p. 192) that for all $u \in H^2([0,\pi]; \C)$ there holds the estimate
\begin{equation}
\label{stark}
\| u_y \|_{L^2(0,\pi)} \leq \frac{\pi}{N-1} \| u_{yy} \|_{L^2(0,\pi)}  + \frac{2N(N+1)}{\pi (N-1)} \| u \|_{L^2(0,\pi)},
\end{equation}
where $N$ is any positive number with $N > 1$. Substitute \eqref{stark} into \eqref{nvak1} to obtain
\begin{equation}
\label{nvak2}
\| \cL_\theta^1 u \|_{\Ldper} \leq C_1 (N) \| u_{yy} \|_{\Ldper} + C_0(N) \| u \|_{\Ldper},
\end{equation}
where
\[
\begin{aligned}
C_1(N) &= \frac{\pi^2 K_1}{N-1} > 0,\\
C_0(N) &= K_0 + \frac{K_1}{N-1} \Big( \pi (N-1) + 2N(N+1) \Big) > 0.
\end{aligned}
\]
On the other hand, the estimate
\[
\| \cL_\theta^0 u \|_{\Ldper} = \|  (i\theta + \pi \partial_y )^2 u + 4 \pi^2 u\|_{\Ldper} \geq \pi^2 \| u_{yy} \|_{\Ldper} - \| 2i\theta \pi u_y + (4\pi^2 - \theta^2)u \|_{\Ldper},
\]
together with \eqref{stark} and $|\theta| \leq \pi$, yield
\[
\begin{aligned}
\pi^2 \| u_{yy} \|_{\Ldper} &\leq \| \cL_\theta^0 u \|_{\Ldper} + 2 \pi |\theta|  \|u_y \|_{\Ldper} + (4\pi^2 - \theta^2) \|u \|_{\Ldper}\\
&\leq \| \cL_\theta^0 u \|_{\Ldper} + 2 \pi^2 \Big( \frac{\pi}{N-1} \| u_{yy} \|_{\Ldper}  + \frac{2N(N+1)}{\pi (N-1)} \| u \|_{\Ldper} \Big) + 4 \pi^2 \| u \|_{\Ldper}\\
&\leq \| \cL_\theta^0 u \|_{\Ldper}  + \frac{2\pi^2}{N-1} \| u_{yy} \|_{\Ldper} + \frac{4 \pi}{N-1} \Big( \pi(N-1) + N(N+1) \Big) \| u \|_{\Ldper}.
\end{aligned}
\]
Let us choose $N$ sufficiently large, say $N \geq 1 + 4 \pi$, so that
\[
1 - \frac{2\pi}{N-1} \geq \frac{1}{2},
\]
and therefore
\[
\| u_{yy} \|_{\Ldper} \leq \frac{2}{\pi^2} \| \cL_\theta^0 u \|_{\Ldper} + \frac{8}{\pi (N-1)} \Big( \pi(N-1) + N(N+1) \Big) \| u \|_{\Ldper}.
\]
Upon substitution into \eqref{nvak2} we arrive at
\[
\| \cL_\theta^1 u \|_{\Ldper} \leq \alpha \|u \|_{\Ldper} + \beta \| \cL_\theta^0 u \|_{\Ldper},
\]
with uniform constants
\[
\begin{aligned}
\alpha &:= \frac{8 C_1(N)}{\pi (N-1)} \Big( \pi(N-1) + N(N+1) \Big) + C_0(N) >0,\\
\beta &:= \frac{2C_1(N)}{\pi^2} > 0.
\end{aligned}
\]
This yields the result.
\end{proof}

Now, let us take a look at the spectral problem \eqref{pertevL} specialized to the case of the Floquet exponent (or Bloch parameter) with $\theta = 0$, namely
\[
\cL_0 u = \cL_0^0 + \vep \cL_0^1 u = \widetilde{\lambda} u, \qquad u \in \Hdper([0,\pi];\C).
\]
First, it is to be observed that the operator
\[
\left\{
\begin{aligned}
\cL_0^0 &= \pi^2 \partial_y^2 + 4 \pi^2 \Id, \\
\cL_0^0 &:  \Ldper([0,\pi];\C)  \to \Ldper([0,\pi];\C),
\end{aligned}
\right.
\]
with domain $\cD(\cL_0^0) = \Hdper([0,\pi];\C)$, is clearly self-adjoint with a positive eigenvalue $\widetilde{\lambda}_0 = 4\pi^2$ associated to the constant eigenfunction $u_0(y) = 1/\sqrt{\pi} \in \Hdper([0,\pi];\C)$ satisfying $\| u_0 \|_{\Ldper} = 1$ and $u_0 \in \ker (\partial_y^2) \subset \Hdper([0,\pi];\C)$. Since $\cL_0^1$ is $\cL_0^0$-bounded because of Lemma \ref{L0bded}, by Proposition \ref{proppert} the operator $\cL_0 = \cL_0^0 + \vep \cL_0^1$ has discrete eigenvalues $\widetilde{\lambda}_j(\vep)$ in a $\vep$-neighborhood of $\widetilde{\lambda}_0 = 4\pi^2$ with multiplicities adding up to $m_0$ if $\vep$ is sufficiently small. Moreover, since $\widetilde{\lambda}_0 >0$ there holds
\[
\Re \lambda_j(\vep) > 0, \qquad |\vep| \ll 1.
\]
Hence, we have proved the following
\begin{lemma}
\label{lemunev}
For each $0 < \vep \ll 1$ sufficiently small there holds
\[
\ptsp( \cL_0^0 + \vep \cL_0^1)_{|\Ldper} \cap \{ \lambda \in \C \, : \, |\lambda - 4\pi^2| < r(\vep) \} \neq \varnothing,
\]
for some $r(\ep) = O(\vep) > 0$.
\end{lemma}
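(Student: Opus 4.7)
The plan is to apply Proposition \ref{proppert} directly to the one-parameter family $\cL_0 = \cL_0^0 + \vep \cL_0^1$, using $\widetilde{\lambda}_0 = 4\pi^2$ as the unperturbed eigenvalue that we wish to continue.

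First I would verify that $\cL_0^0$ is self-adjoint on $\Ldper([0,\pi];\C)$ with dense domain $\Hdper([0,\pi];\C)$; this is immediate because $\pi^2 \partial_y^2$ is self-adjoint on this domain (the $\pi$-periodic boundary conditions are the natural self-adjoint boundary conditions for the second derivative) and $4\pi^2 \Id$ is bounded and self-adjoint. Next I would determine the entire discrete spectrum of $\cL_0^0$ by expanding in the Fourier basis $\{e^{2iny}\}_{n \in \Z}$ of $\Ldper([0,\pi];\C)$: a direct computation gives eigenvalues $\widetilde{\lambda}_n = 4\pi^2(1 - n^2)$, $n \in \Z$. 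In particular, $\widetilde{\lambda}_0 = 4\pi^2$ is a simple, isolated eigenvalue with normalized eigenfunction $u_0(y) = 1/\sqrt{\pi}$, and is separated from the rest of $\ptsp(\cL_0^0)_{|\Ldper}$ by a distance of at least $4\pi^2$ (the nearest other eigenvalues being $0$ for $n = \pm 1$).

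Then I would invoke Lemma \ref{L0bded} to guarantee that the perturbation $\cL_0^1$ is $\cL_0^0$-bounded. Combining this with self-adjointness of $\cL_0^0$ and the isolation of $\widetilde{\lambda}_0 = 4\pi^2$ as a simple eigenvalue, the hypotheses of Proposition \ref{proppert} are satisfied: $\widetilde{\lambda}_0$ is stable with respect to the family $\cL_0(\vep) = \cL_0^0 + \vep \cL_0^1$. Hence there exists $\vep_* > 0$ such that for all $0 < \vep < \vep_*$ the perturbed operator $\cL_0^0 + \vep \cL_0^1$ has a (unique, given the simplicity of $\widetilde{\lambda}_0$) discrete eigenvalue $\widetilde{\lambda}(\vep) \in \ptsp(\cL_0^0 + \vep \cL_0^1)_{|\Ldper}$ admitting a convergent Rayleigh--Schr\"odinger expansion
\[
\widetilde{\lambda}(\vep) = 4\pi^2 + \sum_{k=1}^\infty \alpha_k \vep^k, \qquad \alpha_k \in \C,
\]
with nonzero radius of convergence. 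In particular, there is a uniform constant $C > 0$ such that $|\widetilde{\lambda}(\vep) - 4\pi^2| \leq C \vep$ for all $0 < \vep \ll 1$. Choosing $r(\vep) := 2C\vep = O(\vep) > 0$ yields $\widetilde{\lambda}(\vep) \in \ptsp(\cL_0^0 + \vep \cL_0^1)_{|\Ldper} \cap \{ \lambda \in \C : |\lambda - 4\pi^2| < r(\vep)\}$, proving non-emptiness.

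There is no real obstacle here, since all the nontrivial work (the relative bound) has been done in Lemma \ref{L0bded}; the only subtle point worth being explicit about is the fact that $\widetilde{\lambda}_0 = 4\pi^2$ is indeed an \emph{isolated} simple eigenvalue of $\cL_0^0$, which the Fourier computation confirms and which is needed in order to apply the self-adjoint perturbation criterion and conclude that the perturbed discrete eigenvalue lies within distance $O(\vep)$ rather than merely $o(1)$.
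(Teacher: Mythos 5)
Your proof is correct and follows essentially the same route as the paper: self-adjointness of $\cL_0^0$, the relative bound from Lemma \ref{L0bded}, and the spectral perturbation criterion of Proposition \ref{proppert}. The one step you make explicit that the paper leaves implicit is the Fourier computation of the full spectrum $\widetilde{\lambda}_n = 4\pi^2(1-n^2)$, $n\in\Z$, confirming that $4\pi^2$ is a \emph{simple, isolated} eigenvalue of $\cL_0^0$---a fact that is genuinely needed in order to invoke Proposition \ref{proppert}, so this extra care is well placed.
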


\subsection*{Proof of Theorem \ref{theosmalli}}

Now, since $\vep = \sqrt{\ep}$, from Lemma \ref{lemunev} we know that for $0 < \ep \ll 1$ sufficiently small there exist discrete eigenvalues $\lambda(\ep) \in \ptsp(\cL_0^0 + \sqrt{\ep} \cL_0^1)$ such that $|\lambda - 4 \pi^2| \leq C\sqrt{\ep}$ for some $C > 0$. Transforming back into the original problem, this implies that there exist eigenvalues $\lambda = \lambda(\ep)$ and bounded solutions $w$ of \eqref{probw} with $\theta = 0$ that satisfy
\[
|(T_0^2 + O(\ep)) \lambda(\ep) - 4 \pi^2 | = O(\sqrt{\ep}),
\]
or equivalently (in view that $T_0 = 4\pi^2/g'(0)$),
\begin{equation}
\label{aproxev}
|\lambda(\ep) - g'(0)| = O(\sqrt{\ep}), \qquad 0 < \ep \ll1.
\end{equation}

This implies that for $\ep > 0$ small enough (in a possibly smaller neighborhood, $0 < \ep < \bar{\ep}_0 < \ep_0$) there exist unstable eigenvalues $\lambda(\ep)$ with $\Re \lambda(\ep) > 0$ of the spectral problem \eqref{probw} with $\theta = 0$, for some appropriate eigenfunctions $w$. If we let $\theta$ vary within $(-\pi,\pi]$ we obtain curves of spectrum that locally remain in the unstable half plane (see Figure \ref{figunstableev}). We conclude that  
\[
\sigma(\cL^\ep)_{|L^2} = \bigcup_{-\pi < \theta \leq \pi} \sigma(\cL_\theta)_{|\Ldper} \cap \{\lambda \in \C \, : \, \Re \lambda > 0\} \neq \varnothing,
\]
for $\ep > 0$ sufficiently small. This completes the proof of Theorem \ref{theosmalli}.
\qed

\begin{figure}[h]
\begin{center}
\includegraphics[scale=0.26, clip=true]{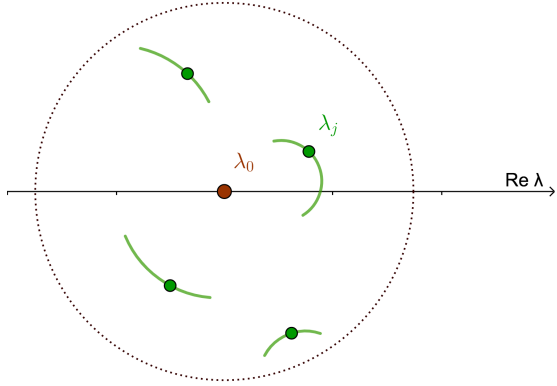}
\end{center}
\caption{\small{Cartoon representation of the unstable real eigenvalue $\lambda_0 = g'(0) > 0$ (in red) and of the neighboring unstable eigenvalues $\lambda_j(\vep)$ (in green) near $\lambda_0$ for $0 < \vep \ll 1$ small for the case of a Floquet exponent $\theta = 0$. By letting $\theta$ vary within $(-\pi,\pi]$ we obtain unstable curves of spectrum (in green) of the linearized operator around the periodic wave (color online).}}\label{figunstableev}
\end{figure}

\begin{remark}
It is to be noticed that formula \eqref{aproxev} readily implies spectral instability in view of the sign of $g'(0)$. Thus, the instablility of $u=0$ as equilibrium point of the reaction function (in the sense that $u=0$ is a local maximum of the potential $\int^u g(s) \, ds$))  is responsible for the spectral instability of the small amplitude waves bifurcating from the equilibrium. Heuristically, this result can be interpreted as follows: when $\ep \to 0^+$ the small-amplitude periodic waves collapse to the origin and the linearized operator tends (formally) to a constant coefficient linearized operator around zero, whose spectrum is determined by a dispersion relation that invades the unstable half plane thanks to the sign of $g'(0)$. Notice as well that the positive sign of $g'(0)$ is also responsible of the existence of the periodic waves bifurcating from the origin. The dedicated reader may easily verify that there is no Hopf bifurcation when $g'(0)<0$, as no change of stability of the origin occurs when we vary $c$ in a neighborhood of $c_0$. 
\end{remark}

\section{Spectral instability of large period wavetrains}
\label{seclarge}

\subsection{Spectral instability of the traveling pulse}

Let us consider the traveling pulse solution to equation \eqref{vbl} from Theorem \ref{thmexlarge} (or Corollary \ref{corpulse}),
\[
u(x,t) = \varphi^0(x-c_1t), \qquad x \in \R, \;\; t > 0,
\]
traveling with speed $c_1 = I_1/I_0$. Denoting as before the Galilean variable of translation as $z = x - c_1 t$, let us consider a solution to \eqref{vbl} of the form $\varphi^0(z) + e^{\bar{\lambda} t} w(z)$ with some $w \in H^2(\R;\C)$ and some $\bar{\lambda} \in \C$. Upon substitution and linearization we arrive at the following eigenvalue problem
\begin{equation}
\label{LbarR}
\begin{aligned}
\bar{\lambda} = \bar{\cL}^0 w &:= w_{zz} + (c_1 - f'(\varphi^0(z)) w_z + (g'(\varphi^0(z)) - f'(\varphi^0(z))_z) w,\\
\bar{\cL}^0 &: \, \cD(\bar{\cL}^0) = H^2(\R;\C) \subset L^2(\R;\C) \longrightarrow L^2(\R;\C).
\end{aligned}
\end{equation}

$\bar{\cL}^0$ is a closed, densely defined operator in $L^2(\R;\C)$, that is, on the whole real line. Moreover, $\bar{\cL}^0$ is of \emph{Sturmian type} (see, e.g., Kapitula and Promislow \cite{KaPro13}, section 2.3),
\[
\bar{\cL}^0 = \partial_z^2 + \bar{a}_1^0 \partial_z + \bar{a}_0^0 \, \Id,
\]
with smooth coefficients
\[
\begin{aligned}
 \bar{a}_1^0(z) &= c_1 -  f'(\varphi^0(z)),\\
 \bar{a}_0^0(z) &= g'(\varphi^0(z)) - f'(\varphi^0(z))_z,
\end{aligned}
\]
which decay exponentially to finite limits as $z \to \pm \infty$ in view of \eqref{expdecay}, more precisely,
\begin{equation}
\label{coeffexpd}
| \bar{a}_1^0(z) - \bar{a}_1^{\infty} | + | \bar{a}_0^0(z) - \bar{a}_0^{\infty} | \leq C e^{- \kappa |z|}, \qquad z \to \pm \infty,
\end{equation}
with,
\[
\bar{a}_1^{\infty} := c_1 - f'(1), \qquad \bar{a}_0^{\infty} := g'(1).
\]

The operator $\bar{\cL}^0$ is not self-adjoint but can be made self-adjoint under the $\omega$-inner product
\[
\< u,v \>_{L^2_\omega} := \int_{\R} u(z) v(z)^* \omega(z) \, dz,
\]
where the weight function $\omega(\cdot)$ is defined as
\[
\omega(z) := \exp \left( \int_0^z \bar{a}_1^0(s) \, ds \right),
\]
and has finite asymptotic values $\omega_{\pm} := \lim_{z \to \pm \infty} e^{- \bar{a}_1^{\infty} z} \omega(z)$.


The instability of the traveling pulse is therefore a direct consequence of standard Sturm-Liouville theory.

\begin{theorem}
\label{theoinspulse}
The traveling pulse solution is spectrally unstable, more precisely, there exists $\bar{\lambda}_0 > 0$ (real and strictly positive) such that $\bar{\lambda}_0 \in \ptsp(\bar{\cL}^0)$. Moreover, this eigenvalue is simple.
\end{theorem}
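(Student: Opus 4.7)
The plan is to exploit the self-adjoint structure of $\bar{\cL}^0$ in $L^2_\omega(\R;\C)$ pointed out in the excerpt, together with the classical Sturm oscillation theorem for Schr\"odinger operators on the line. First I would locate the essential spectrum. By \eqref{coeffexpd} the coefficients $\bar{a}_0^0, \bar{a}_1^0$ converge exponentially to the constants $g'(1)$ and $c_1 - f'(1)$, so a standard relatively-compact perturbation argument (see, e.g., \cite{KaPro13}) identifies $\ess(\bar{\cL}^0)_{|L^2}$ with the image of the dispersion relation $\lambda(\xi) = -\xi^2 + i(c_1-f'(1))\xi + g'(1)$, $\xi \in \R$, which by \eqref{H2} lies entirely in $\{\Re \lambda \leq g'(1) < 0\}$. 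Consequently every $\lambda$ with $\Re \lambda > g'(1)$ either belongs to the resolvent set or is an isolated point eigenvalue of finite multiplicity; in particular, any real positive eigenvalue is automatically a genuine point eigenvalue.

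Next, differentiating the profile equation \eqref{profileq} with $c = c_1$ in $z$ yields $\bar{\cL}^0 \varphi^0_z = 0$, and the exponential decay estimate in Corollary \ref{corpulse} ensures $\varphi^0_z \in H^2(\R;\C)$; hence $\lambda = 0 \in \ptsp(\bar{\cL}^0)_{|L^2}$. Moreover $\varphi^0_z$ possesses at least one zero on $\R$, because $\varphi^0(z) \to 1$ as $z \to \pm \infty$ and $\varphi^0 \not\equiv 1$ force $\varphi^0$ to attain a global extremum at some interior point $z_\ast \in \R$.

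I would then conjugate by $\omega^{1/2}$, setting $u := \omega^{1/2} w$, which transforms $\bar{\cL}^0 w = \lambda w$ into the self-adjoint Schr\"odinger form
\[
-u'' + V(z)\, u = -\lambda\, u, \qquad V(z) := -\bar{a}_0^0(z) + \tfrac{1}{4} \bar{a}_1^0(z)^2 + \tfrac{1}{2} \partial_z \bar{a}_1^0(z),
\]
where $V$ is bounded, $C^2$, and converges exponentially to $V_\infty := \tfrac{1}{4}(c_1 - f'(1))^2 - g'(1) > 0$ as $|z| \to \infty$. The conjugation $w \leftrightarrow u$ is an isomorphism between $L^2_\omega$ and $L^2$ that preserves the nodal structure of eigenfunctions (since $\omega^{1/2} > 0$), so the real discrete spectrum of $\bar{\cL}^0$ in $\{\lambda > g'(1)\}$ is in bijection via $\mu = -\lambda$ with the discrete spectrum of $\cH := -\partial_z^2 + V \cdot \Id$ lying below its essential spectrum $[V_\infty, \infty)$. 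By the Sturm oscillation theorem for such Schr\"odinger operators on $\R$ (see, e.g., \cite{KaPro13}), this discrete spectrum consists of finitely many simple eigenvalues $\mu_0 < \mu_1 < \cdots < \mu_N < V_\infty$ whose eigenfunctions carry exactly $0, 1, \ldots, N$ zeros. Since $\mu = 0$ is an eigenvalue whose eigenfunction has at least one zero, $\mu = 0$ cannot be the ground state; hence $\mu_0 < 0$ exists, is simple, and admits a strictly positive eigenfunction. Setting $\bar{\lambda}_0 := -\mu_0 > 0$ and undoing the conjugation yields the claimed simple positive point eigenvalue of $\bar{\cL}^0$, with eigenfunction decaying exponentially at $\pm \infty$ and therefore lying in $L^2(\R;\C)$.

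The main technical point is the precise invocation of Sturm's oscillation theorem on the whole line: one must ensure that, below $V_\infty$, the spectrum of $\cH$ consists of finitely many simple eigenvalues ordered by nodal count. This is classical but slightly less commonly stated than its bounded-interval counterpart; once it is in place, the existence and simplicity of $\bar{\lambda}_0 > 0$ follow immediately from $\varphi^0_z$ being a non-sign-definite eigenfunction at $\lambda = 0$.
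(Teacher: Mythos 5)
Your proposal is correct and follows essentially the same route as the paper's proof. Both establish $\lambda = 0 \in \ptsp(\bar{\cL}^0)$ via the translation eigenfunction $\varphi^0_z$, observe that this eigenfunction vanishes somewhere on $\R$, and then invoke Sturm-Liouville oscillation theory for Sturmian operators on the line (Theorem 2.3.3 in Kapitula and Promislow \cite{KaPro13}) to conclude that $\lambda=0$ is not the ground state, so a simple positive eigenvalue $\bar{\lambda}_0 > 0$ must lie above it. The only differences are cosmetic: you carry out the conjugation $u = \omega^{1/2} w$ explicitly to reach Schr\"odinger form, whereas the paper cites the Sturmian-operator theorem directly; and you argue $\varphi^0_z$ has at least one zero from the interior extremum of $\varphi^0$, whereas the paper reads off from the phase plane that $\varphi^0_z$ has exactly one zero (at the vertex $(u_*,0)$ of the homoclinic loop), which additionally pins down $\lambda = 0$ as the second-largest eigenvalue. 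Your weaker ``at least one zero'' is enough for the instability conclusion.
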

\begin{proof}
Since $\bar{\cL}^0 : L^2 \to L^2$ is of Sturmian type and its coefficients satisfy \eqref{coeffexpd} we can apply Theorem 2.3.3 in \cite{KaPro13}, p. 33, to conclude that the point spectrum of $\bar{\cL}^0$ consists of a finite number of simple real eigenvalues which can be enumerated in a strictly decreasing order
\[
\bar{\lambda}_0 > \bar{\lambda}_1 > \ldots > \bar{\lambda}_N > \bar{a}_0^{\infty},
\]
with $N \in \N$, and for any $j = 1, \ldots, N$, the eigenfunction $q_j \in H^2$ associated to $\bar{\lambda}_j$ can be normalized such that $q_j$ has exactly $j$ zeroes. Moreover, the ground state eigenvalue $\bar{\lambda}_0$ is determined by
\[
\bar{\lambda}_0 = \sup_{\| u \|_{L^2_\omega} =1} \< \bar{\cL}^0 u, u \>_{L^2_\omega},
\]
where the supremum is achieved precisely at $u = q_0$, which has no zeroes.

Now, it is to be observed that $\lambda = 0$ belongs to $\ptsp(\bar{\cL}^0)$ because the derivative of $\varphi^0$ is the associated eigenfunction. Indeed, equation \eqref{psi3} (with $\psi = \varphi^0$, the traveling pulse) is equivalent to
\[
\bar{\cL}^0 (\partial_z \varphi^0) = \partial_z^3 \varphi^0 +  \bar{a}_1^0(z) \partial_z^2 \varphi^0 +  \bar{a}_0^0(z) \partial_z \varphi^0 = 0.
\]
Moreover, since $\varphi^0 \in C^3(\R)$ and by exponential decay, it is clear that $\partial_z \varphi^0 \in H^2(\R;\C)$. Thus, $\lambda = 0 \in \ptsp(\bar{\cL}^0)$ with associated eigenfunction $\partial_z \varphi^0$.

Notice, however, that from the phase plane construction $\partial_z \varphi^0$ has exactly one zero (located at $(u_*,0)$ in the phase plane; see Theorem \ref{theoexisthomo}). Hence we deduce that $\bar{\lambda}_1 = 0$ is the second largest eigenvalue, associated to the eigenfuction $q_1 = \alpha_1 \partial_z \varphi^0$ (where $\alpha_1 \neq 0$ is a normalizing constant), which has exactly one zero. Therefore, there exists one positive eigenvalue $\bar{\lambda}_0 > 0$, the ground state, with eigenfunction $q_0 \in H^2$, which has no zeroes.
\end{proof}

The spectral problem for the traveling pulse \eqref{LbarR} can be recast as a first order system on the whole real line of the form
\begin{equation}
\label{firstordA0}
W_z = \A^0(z,\lambda) W,
\end{equation}
where
\begin{equation}
\label{A0coef}
\A^0(z,\lambda) := \begin{pmatrix} 0 & 1 \\ \lambda - \bar{a}_0^0(z) & - \bar{a}_1^0(z) \end{pmatrix}
= \begin{pmatrix} 0 & 1 \\ \lambda - (g'(\varphi^0)-f'(\varphi^0)_z) & - c_1 + f'(\varphi^0) \end{pmatrix}.
\end{equation}
These coefficients are clearly analytic in $\lambda \in \C$ and of class $C^1(\R;\C^{2 \times 2})$ as functions of $z \in \R$. Moreover, they have asymptotic limits given by
\begin{equation}
\label{A0coefasymp}
\A^0_\infty(\lambda) := \lim_{z \to \pm \infty} \A^0(z,\lambda) = \begin{pmatrix} 0 & 1 \\ \lambda - \bar{a}_0^{\infty} & - \bar{a}_1^{\infty} \end{pmatrix} = \begin{pmatrix} 0 & 1 \\ \lambda - g'(1) & - c_1 + f'(1) \end{pmatrix}.
\end{equation}
Thanks to exponential decay \eqref{expdecay} of the homoclinic orbit, we have
\[
|\varphi^0(z) - 1| + |(\varphi^0)'(z)| \leq C e^{- \kappa |z|}, \qquad z \in \R.
\] 
Therefore, from continuity of the coefficients and for any $|\lambda| \leq M$, with some $M > 0$, there exists a constant $C(M) > 0$ such that
\begin{equation}
\label{laH2K}
| \A^0(z,\lambda) - \A^0_\infty(\lambda)| \leq C(M) e^{-\kappa |z|},
\end{equation}
for all $z \in \R$. Hence,
\begin{equation}
\label{defOmegai}
\Omega_{\infty} = \{ \lambda \in \C \, : \, \Re \lambda > g'(1) \},
\end{equation}
is an open connected subset in the complex plane. Then, from assumption \eqref{H2}, it is clear that the unstable half plane, namely $\C_+ := \{ \lambda \in \C \, : \, \Re \lambda > 0\}$, is properly contained in $\Omega_{\infty}$. Moreover, it is easy to verify that for every $\lambda \in \Omega_{\infty}$ the coefficient matrix $\A^0_{\infty}(\lambda)$ has no center eigenspace and that its stable, $\bbS^0_{\infty}(\lambda)$, and unstable, $\U^0_{\infty}(\lambda)$, eigenspaces satisfy
\[
\dim \U^0_{\infty}(\lambda) = \dim \bbS^0_{\infty}(\lambda) = 1, \qquad \text{for all } \; \lambda \in \Omega_{\infty}.
\]
$\Omega_{\infty}$ is called the set of consistent splitting (or domain of hyperbolicity) of $\A^0_{\infty}(\lambda)$. 

One can define the family of operators 
\[
\cT^0(\lambda) : L^2(\R;\C) \times L^2(\R;\C) \longrightarrow L^2(\R;\C) \times L^2(\R;\C),
\]
parametrized by $\lambda \in \C$, densely defined with domain $\cD(\cT^0) = H^2(\R;\C) \times H^1(\R;\C)$ and given by
\[
\cT^0(\lambda) = \frac{d}{dz} - \A^0(z,\lambda).
\]
It is well-known (cf. \cite{San02,KaPro13}) that $\ptsp(\bar{\cL}^0)_{|L^2}$ coincides with the set of complex numbers $\lambda \in \C$ such that $\cT^0(\lambda)$ is a Fredholm operator with index equal to zero. Therefore, from Theorem \ref{theoinspulse} we reckon the existence of an unstable real and simple eigenvalue, $\bar{\lambda}_0 > 0$, for which there exists a bounded solution
\[
W_0 = \begin{pmatrix} q_0 \\ \partial_z q_0
\end{pmatrix} \in H^2(\R;\C) \times H^1(\R;\C),
\]
to the equation
\[
\cT^0(\bar{\lambda}_0) W_0 = \partial_z W_0 - \A^0(\bar{\lambda}_0,z) W_0 = 0,
\]
for all $z \in \R$. As a corollary of Theorem \ref{theoinspulse}, the homoclinic Evans function associated to the traveling pulse is non-vanishing in the open set $\Omega_\infty$, except for a single, real, unstable and simple zero at $\lambda = \bar{\lambda}_0 > 0$. More precisely,
\[
\begin{aligned}
D^0(\lambda) &\neq 0, \quad \text{for all } \;\; \lambda \in \Omega_\infty \backslash \{\bar{\lambda}_0\},\\
D^0(\bar{\lambda}_0) &= 0,  \quad \frac{d D^0}{d \lambda} (\bar{\lambda}_0) \neq 0,
\end{aligned}
\]
where $D^0 = D^0(\lambda)$ denotes the homoclinic Evans funcion defined in \eqref{defhomEv} for the traveling pulse $\varphi^0 = \varphi^0(z)$.

\subsection{Approximation theorem for large spatial period}

In order to establish the spectral instability of the large period waves from Theorem \ref{thmexlarge}, we need to verify that the family of waves satisfies the structural assumptions of the seminal result of Gardner \cite{Grd2} on convergence of spectra of periodic traveling waves in the infinite-period (homoclinic) limit to the isolated point spectrum of the underlying homoclinic orbit. The proof of Gardner is of topological nature. We thus refer to the (more analytical) works of Sandstede and Scheel \cite{SS6} and Yang and Zumbrun \cite{YngZ19}.

Under assumptions \eqref{H1} - \eqref{H3}, \eqref{H5} and \eqref{H6}, consider the family of periodic traveling waves from Theorem \ref{thmexlarge},
\[
\begin{aligned}
u(x,t) &= \varphi^\ep (x - c(\ep)t), \\
 \varphi^\ep(z) &= \varphi^\ep( z + T_\ep), \qquad \text{for all } \; z \in \R,
\end{aligned}
\]
traveling with speed $c = c(\ep)$ and parametrized by $\ep = | c_1 - c(\ep)|$, with $0 < \ep < {\ep}_1 \ll 1$ sufficiently small. The family converges as $\ep \to 0^+$ to the solitary wave (traveling pulse) solution $\varphi^0(x - c_1 t)$ traveling with speed $c_1 = I_1/I_0$, which is associated to a homoclinic orbit for system \eqref{firstos} with $c = c_1$. The fundamental period of the family of periodic waves, $T_\ep$, converges to $\infty$ as $\ep \to 0^+$ at order $O(|\log \ep|)$.

From the discussion in Section \S \ref{secspe}, we know that the spectral problem for each member of the family $\varphi^\ep$, $0 < \ep < {\ep}_1$, can be written as a first order system of the form \eqref{firstordersyst}
\begin{equation}
\label{firstordAe}
W_z = \A^\ep(z,\lambda) W,
\end{equation}
where the coefficients,
\begin{equation}
\label{Aecoef}
\A^\ep(z,\lambda) = \begin{pmatrix} 0 & 1 \\ \lambda - \bar{a}_0^{\ep}(z) & - \bar{a}_1^{\ep}(z)\end{pmatrix},
\end{equation}
are analytic in $\lambda \in \C$, continuous in $\ep > 0$ and of class $C^1(\R;\C^{2 \times 2})$ as functions of $z \in \R$. Here, the scalar coefficients,
\[
\begin{aligned}
 \bar{a}_1^{\ep}(z) &:= c(\ep) -  f'(\varphi^{\ep}(z)),\\
 \bar{a}_0^{\ep}(z) &:= g'(\varphi^{\ep}(z)) - f'(\varphi^{\ep}(z))_z,
\end{aligned}
\]
are bounded, sufficiently smooth functions of $z$. The the family of operators 
\[
\left\{
\begin{aligned}
\cT^\ep(\lambda) &: L^2(\R;\C) \times L^2(\R;\C) \longrightarrow L^2(\R;\C) \times L^2(\R;\C),\\
\cT^\ep(\lambda) &= \partial_z - \A^\ep(z,\lambda),
\end{aligned}
\right.
\]
parametrized by $\lambda \in \C$, has the property that its spectrum is purely essential. Notice as well that
\[
\begin{aligned}
\A^\ep(z,\lambda) &- \A^0(z,\lambda) = \begin{pmatrix} 
0 & 0 \\ \bar{a}_0^0(z) - \bar{a}_0^{\ep}(z) &  \bar{a}_1^0(z) - \bar{a}_1^{\ep}(z)
\end{pmatrix} \\
&= \begin{pmatrix} 
0 & 0 \\ g'(\varphi^0) - g'(\varphi^\ep) - f''(\varphi^0) (\varphi^0)' + f''(\varphi^\ep)(\varphi^\ep)' & c_1 - c(\ep) - f'(\varphi^0) + f'(\varphi^\ep)
\end{pmatrix}.
\end{aligned}
\]
Hence, since the coefficients are smooth and bounded and from estimates \eqref{boundw1} and \eqref{boundc1} we have, for $|\lambda |\leq M$,
\[
\begin{aligned}
|\A^\ep(z, \lambda) - \A^0(z,\lambda) | &\leq \overline{C}(M) \Big( |\varphi^0(z) - \varphi^\ep(z)| + | (\varphi^0)'(z) - (\varphi^\ep)'(z)| + |c_1 - c(\ep)| \Big)\\
&\leq C(M) e^{- \kappa T_\ep/2}.
\end{aligned}
\]

Consequently, from the estimate above, Theorem \ref{thmexlarge} and \eqref{laH2K} we conclude that, for every $|\lambda| \leq M$, there holds
\begin{equation}
\label{hypsYZ}
\begin{aligned}
T_\ep = O(|\log \ep|) &\to \infty, \quad \text{as } \, \ep \to 0^+,\\
|\A^0(z,\lambda) - \A^0_{\infty}| &\leq C(M) e^{- \bar{\theta} |z|}, \quad \text{for all } \, z \in \R,\\
|\A^0(z,\lambda) - \A^\ep(z,\lambda)| &\leq C(M) e^{-\kappa T_\ep/2}, \quad \text{for all } \, |z| \leq \frac{T_\ep}{2},
\end{aligned}
\end{equation}
for some uniform constants $C(M), \kappa > 0$. Here $\bar{\theta} = \kappa$ in view of \eqref{laH2K}.

Conditions \eqref{hypsYZ} are the structural assumptions (H1) - (H3) in \cite{YngZ19} (p. 30). Thus we have the following
\begin{theorem}[Gardner \cite{Grd2}; Yang and Zumbrun \cite{YngZ19}]
\label{thmYZall}
Assume \eqref{hypsYZ}. Then on a compact set $K \subset \Omega_{\infty}$ such that the homoclinic Evans function $D^0 = D^0(\lambda)$ does not vanish on $\partial K$, the spectra of $\cL^\ep$ for $T_\ep$ sufficiently large (or equivalently, for any $0 < \ep < \ep_2$ with $0 < \ep_2 \ll 1$ sufficiently small) consists of loops of spectra $\Lambda_{k,j}^\ep \subset \C$, $k = 1, \ldots, m_j$, in a neighborhood of order $O(e^{-\eta T_\ep/(2m_j)})$ of the eigenvalues $\lambda_j$ of $\bar{\cL}^0$, where $m_j$ denotes the algebraic multiplicity of $\lambda_j$ and $0 < \eta < \min \{\kappa, \bar{\theta}\}$.
\end{theorem}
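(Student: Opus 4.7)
The plan is to reduce the proof to a direct application of the Yang--Zumbrun refinement \cite{YngZ19} of Gardner's classical convergence theorem \cite{Grd2}, after observing that the structural assumptions \eqref{hypsYZ} coincide exactly with the hypotheses (H1)--(H3) required in \cite{YngZ19}. These three conditions---divergence of the period $T_\ep \to \infty$, exponential convergence of the pulse coefficients $\A^0(z,\lambda)$ to their asymptotic limit $\A^0_\infty(\lambda)$, and exponential closeness of the periodic and pulse coefficient matrices $\A^\ep(z,\lambda)$ and $\A^0(z,\lambda)$ on the half period $|z| \leq T_\ep/2$---have already been verified from the exponential decay estimate \eqref{expdecay} for the traveling pulse and from the convergence bounds \eqref{boundw1}, \eqref{boundc1} of Theorem \ref{thmexlarge}. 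With the hypotheses in hand, the conclusion on the loop structure of the Floquet spectrum is immediate.

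To motivate why the invoked theorem holds, I would organize its proof around a careful asymptotic analysis of the periodic Evans function $D^\ep(\lambda,\theta) = \det(\M^\ep(\lambda) - e^{i\theta}\I)$ as $\ep \to 0^+$. First I would decompose the fundamental solution of the periodic first-order system \eqref{firstordAe} across one period into a middle piece on $|z| \leq T_\ep/2$ and two end pieces. On the middle piece the flow of $\A^\ep(z,\lambda)$ is approximated by that of $\A^0(z,\lambda)$ through Gronwall's inequality applied to the third bound in \eqref{hypsYZ}, with error $O(e^{-\kappa T_\ep/2})$ uniform on compact subsets of $\lambda$. On the end pieces the coefficients $\A^0(z,\lambda)$ are exponentially close to $\A^0_\infty(\lambda)$, so the flow can be identified with $\exp(\A^0_\infty(\lambda) z)$ up to further exponentially small corrections. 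Matching the local pieces via the exponential-dichotomy projectors onto the asymptotic stable and unstable eigenspaces of $\A^0_\infty(\lambda)$ then yields an approximation of the form $D^\ep(\lambda,\theta) = E(\lambda, e^{i\theta}) + O(e^{-\eta T_\ep})$, where $E(\lambda,\mu)$ is an explicit algebraic expression whose zeros in $\lambda$ for fixed $\mu$ contain the eigenvalues $\lambda_j \in \ptsp(\bar{\cL}^0)$ with their algebraic multiplicities $m_j$.

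Once the Evans-function approximation is in place, I would close the argument by a Rouch\'e-type counting: on small contours $\gamma_j$ enclosing each $\lambda_j \in \ptsp(\bar{\cL}^0) \cap K$ on which $D^0$ does not vanish, the uniform closeness of $D^\ep(\cdot,\theta)$ to $E(\cdot, e^{i\theta})$ implies that $D^\ep(\cdot,\theta)$ has exactly $m_j$ zeros inside $\gamma_j$ for $\ep$ sufficiently small, each lying within distance $O(e^{-\eta T_\ep/(2m_j)})$ of $\lambda_j$. As the Bloch parameter $\theta$ varies continuously over the circle $(-\pi,\pi]$, these zeros trace out the desired loops $\Lambda^\ep_{k,j}$, $k=1,\ldots,m_j$, inside the continuous Floquet spectrum of $\cL^\ep$. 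Outside a fixed neighborhood of $\ptsp(\bar{\cL}^0) \cap K$, the non-vanishing of $D^0$ on $\partial K$ together with the approximation precludes any further spectrum of $\cL^\ep$ inside $K$.

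The main obstacle, and the analytic heart of \cite{YngZ19}, lies in establishing the uniform-in-$\ep$ exponential dichotomies needed to control the end-region contributions with error estimates strong enough to extract the precise $O(e^{-\eta T_\ep/(2m_j)})$ bound on the loop diameter; the factor $2m_j$ enters through the order of vanishing of the Evans function at $\lambda_j$ via Puiseux-type expansions of its perturbed zeros. In the present paper this step would be treated as a black box, simply citing the relevant results of \cite{YngZ19} and reading off the conclusion from the verification of \eqref{hypsYZ}.
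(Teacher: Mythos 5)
Your proposal is correct and matches the paper's approach exactly: the paper's entire proof is a citation to Corollary~4.1 and Proposition~4.2 of \cite{YngZ19}, which is precisely the ``black box'' application you describe after observing that \eqref{hypsYZ} are the structural hypotheses (H1)--(H3) of that reference. Your additional sketch of the Evans-function asymptotics and Rouch\'e-type zero counting underlying \cite{YngZ19} is accurate background, but it lives entirely inside the cited work and is not reproduced in the paper.
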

\begin{proof}
See Corollary 4.1 and Proposition 4.2 in \cite{YngZ19}.
\end{proof}

\begin{remark}
The conclusion of Theorem \ref{thmYZall} is a refinement of the classical Gardner's result (Theorem 1.2 in \cite{Grd2}) due to Yang and Zumbrun \cite{YngZ19}, who prove the convergence of the periodic Evans function, $D^\epsilon(\lambda,\theta)$, associated to the periodic waves for each value of $\epsilon$ to the corresponding homoclinic Evans function $D^0(\lambda)$, as $\epsilon \to 0^+$. For that purpose, they rescale the periodic Evans function as a Jost-function type determinant, involving the difference of two matrix-valued functions (see also \cite{Z16}). The third equation in \eqref{hypsYZ} (exponential bound) is an additional hypothesis to those of Gardner, but it holds true in many situations (like ours) where the vertex of the homoclinic loop is a hyperbolic rest point of the traveling wave ODE, under the (typically true) transversality condition regarding the associated Melnikov separation function with full rank with respect to the bifurcation parameter (for an extensive discussion on this issue, see Sandstede and Scheel \cite{SS6}, Proposition 5.1 and hypotheses (G1) and (G2)).
\end{remark}

\subsection*{Proof of Theorem \ref{theolargei}}

Under assumptions \eqref{H1}, \eqref{H2}, \eqref{H3}, \eqref{H5} and \eqref{H6}, it is clear that the family of periodic waves with large period, $\varphi^\ep$, as well as the traveling pulse $\varphi^0$ from Theorem \ref{thmexlarge}, satisfy hypotheses \eqref{hypsYZ}. Let $\bar{\lambda}_0 > 0$ be the real, simple and positive (homoclinic) eigenvalue of the linearized operator, $\bar{\cL}^0$, around the traveling pulse (see Theorem \ref{theoinspulse}). Since $\C_+ \subset \Omega_\infty$ and $\bar{\lambda}_0 > 0$ is an isolated eigenvalue, then we can take a closed contour $\Gamma$ around $\bar{\lambda}_0$ such that $K = \overline{\Gamma} \cup (\text{int} \, \Gamma)$ is a small compact set contained in $\Omega_\infty$ with no eigenvalues of $\bar{\cL}^0$ on $\partial K = \Gamma$. Then from Theorem \ref{thmYZall} we conclude that there exists $\bar{\ep}_1:= \min \{\ep_1,\ep_2\} > 0$ sufficiently small such that for all $0 < \ep < \bar{\ep}_1$ there exists a loop of spectrum $\Lambda^\ep \subset \C$ in a small neighborhood around $\bar{\lambda}_0$ of order $O(e^{-\kappa T_\ep/2}) = O(\ep)$ of eigenvalues of the linearized operator $\cL^\ep$ around $\varphi^\ep$. Moreover, since the unstable homoclinic eigenvalue $\bar{\lambda}_0$ is simple, then for each $0 < \ep < \bar{\ep}_1$ there exists one single closed loop of spectrum $\Lambda^\ep$. This loop does not necessarily contain $\bar{\lambda}_0$ but belongs to a $O(\ep)$-neighborhood of it (see Figure \ref{figunstableev-homo}). Hence, we conclude that the spectrum of the linearized operator $\cL^\ep$ around each periodic wave $\varphi^\ep$ with $0 < \ep < \bar{\ep}_1$ is contained in the unstable half plane. The theorem is proved.
\qed

\begin{figure}[h]
\begin{center}
\includegraphics[scale=0.26, clip=true]{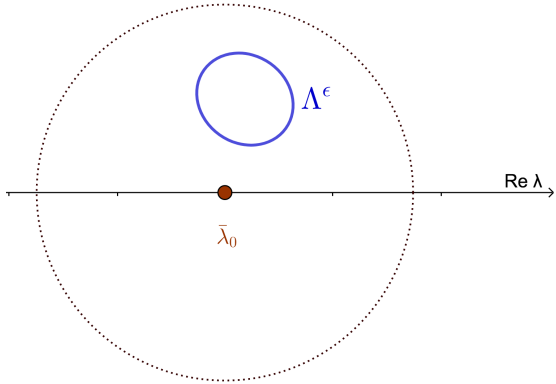}
\end{center}
\caption{\small{Cartoon representation of the unstable, simple, real eigenvalue, $\bar{\lambda}_0 > 0$ (in red), of the linearized operator $\bar{\cL}^0$ around the homoclinic loop. For $0 < \ep \ll 1$ sufficiently small there exists a unique loop of spectra, $\Lambda^\ep$ (in blue), of the linearized operator $\cL^\ep$ around the periodic wave inside an unstable $O(\ep)$-neighborhood of $\bar{\lambda}_0$ (color online).}}\label{figunstableev-homo}
\end{figure}

\section{Examples}
\label{secexa}

In this section we present some examples of viscous balance laws of the from \eqref{gvbl} which satisfy the hypotheses of this paper.

%
%
%

\subsection{Burgers-Fisher equation}

The Burgers-Fisher equation (cf. \cite{LeHa16,MiGu02,Vall18b}),
\begin{equation}
\label{eqBF}
u_t + uu_x = u_{xx} + u(1-u), \qquad x \in \R, \; t >0,
\end{equation}
is a viscous balance law of the form \eqref{vbl} where the nonlinear flux function is given by the classical Burgers' flux \cite{Bur48,La57},
\begin{equation}
\label{BFf}
f(u) = \frac{1}{2} u^2,
\end{equation}
and the reaction term is the well-known Fisher-KPP \cite{Fis37,KPP37} (or logistic) function,
\begin{equation}
\label{BFg}
g(u) = u(1-u).
\end{equation}

The Burgers-Fisher equation is perhaps the simplest scalar model combining nonlinear advection effects together with viscosity and a production rate of logistic type. It is the paradigm of a scalar viscous balance law. 
Clearly, $f$ and $g$ satisfy assumptions \eqref{H1} and \eqref{H2}. It is also easy to verify that there exists a unique value $u_* = -1/2$ such that
\[
\int_{u_*}^1 g(s) \, ds = \int_{-\tfrac{1}{2}}^1 s(1-s) \, ds = 0,
\]
and assumption \eqref{H3} is also satisfied. Moreover, since $g'(u) = 1-2u$, $g''(u) = -2$, $f'(u) = u$, $f''(u) = 1$ for all $u$, the parameter $\overline{a}_0$ in \eqref{H4} is 
\[
\overline{a}_0 = - \frac{f''(0) g''(0)}{\sqrt{g'(0)}} = 2 > 0,
\]
verifiying, in this fashion, the genericity condition \eqref{H4}. Notice that, since $\overline{a}_0 > 0$ and $c_0 = f'(0) = 0$, then Theorem \ref{theosmallw} guarantees the emergence of a family of small-amplitude periodic waves for each speed value $c \in (0, \ep_0)$ with $0 <\ep_0 \ll 1$ sufficiently small. The periodic orbits are unstable with respect to the dynamical system \eqref{firstos} with $c = c(\ep)$ and this corresponds to a \emph{subcritical} Hopf bifurcation. Their fundamental period is $T = 2\pi + O(c)$, for $c \sim 0^+$. The emergence of small-amplitude waves for the Burgers-Fisher equation \eqref{eqBF} is illustrated in Figure \ref{figHopfBF}. Figure \ref{figHopfBFneg} shows the phase portrait of system \eqref{firstos} for the speed value $c = -0.05$; the origin is a repulsive node and all nearby solutions (in light green color) move away from it. Figure \ref{figHopfBF0} shows the case when $c = 0$, the parameter value where the subcritical Hopf bifurcation occurs; the origin is a center and solutions move away if they start far enough from the origin and locally rotate around a linearized center otherwise. Figure \ref{figHopfBFpos} shows the case where $c = 0.005$: the orbit in red is a numerical approximation of the unique small amplitude periodic wave for this speed value, the origin is an attractive node and nearby solutions inside the periodic orbit approach zero, whereas solutions outside the periodic orbit move away since the orbit is unstable as a solution to system \eqref{firstos}. Panel \ref{figHopfBFwave} shows the graph (in red) of the periodic wave $\varphi$ as a function of the Galilean variable $z = x -ct$. This is a family of spectrally unstable periodic waves according to Theorem \ref{theosmalli}.

\begin{figure}[ht]
\begin{center}
\subfigure[$c = -0.05$]{\label{figHopfBFneg}\includegraphics[scale=.45, clip=true]{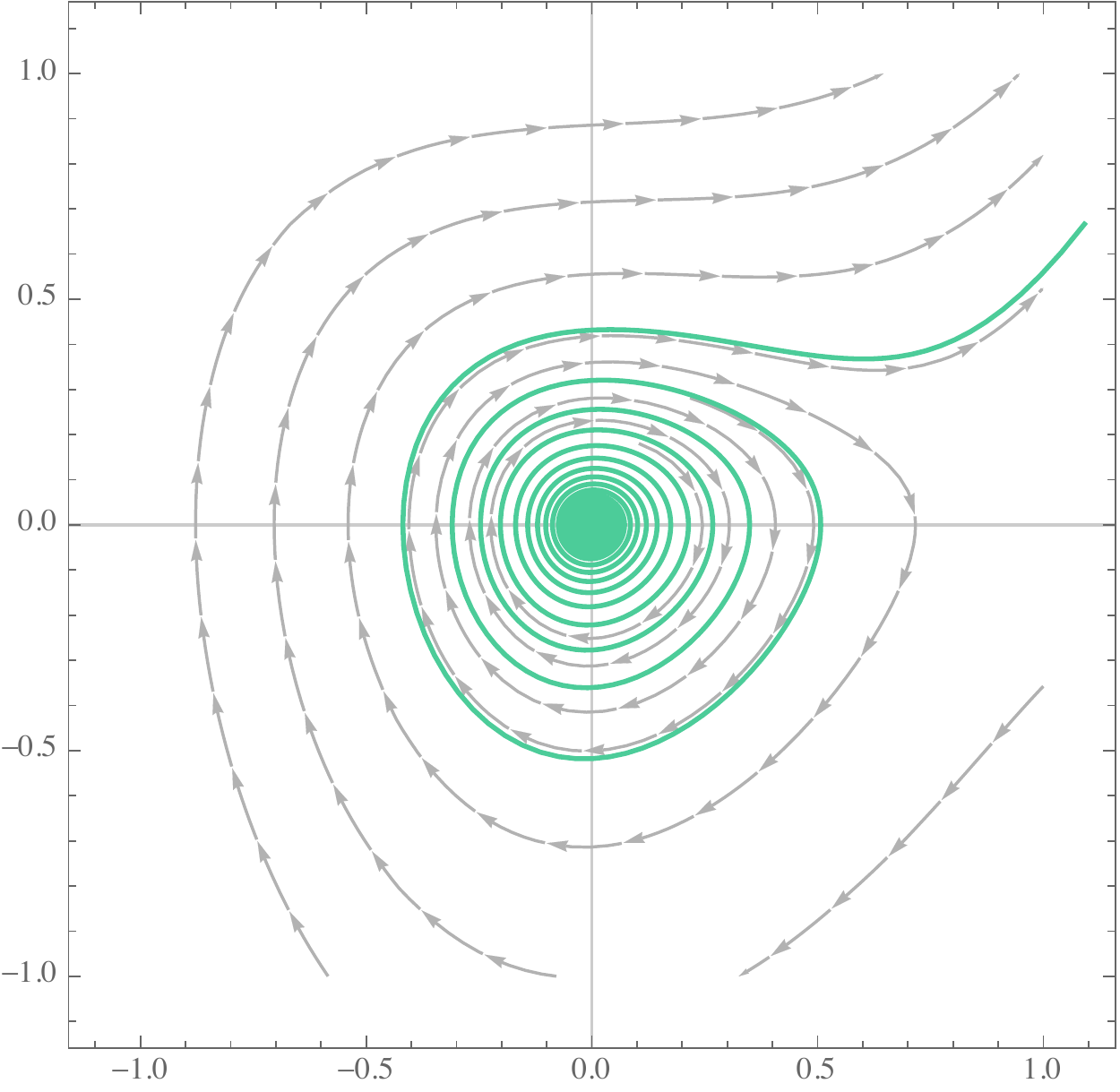}}
\subfigure[$c=0$]{\label{figHopfBF0}\includegraphics[scale=.45, clip=true]{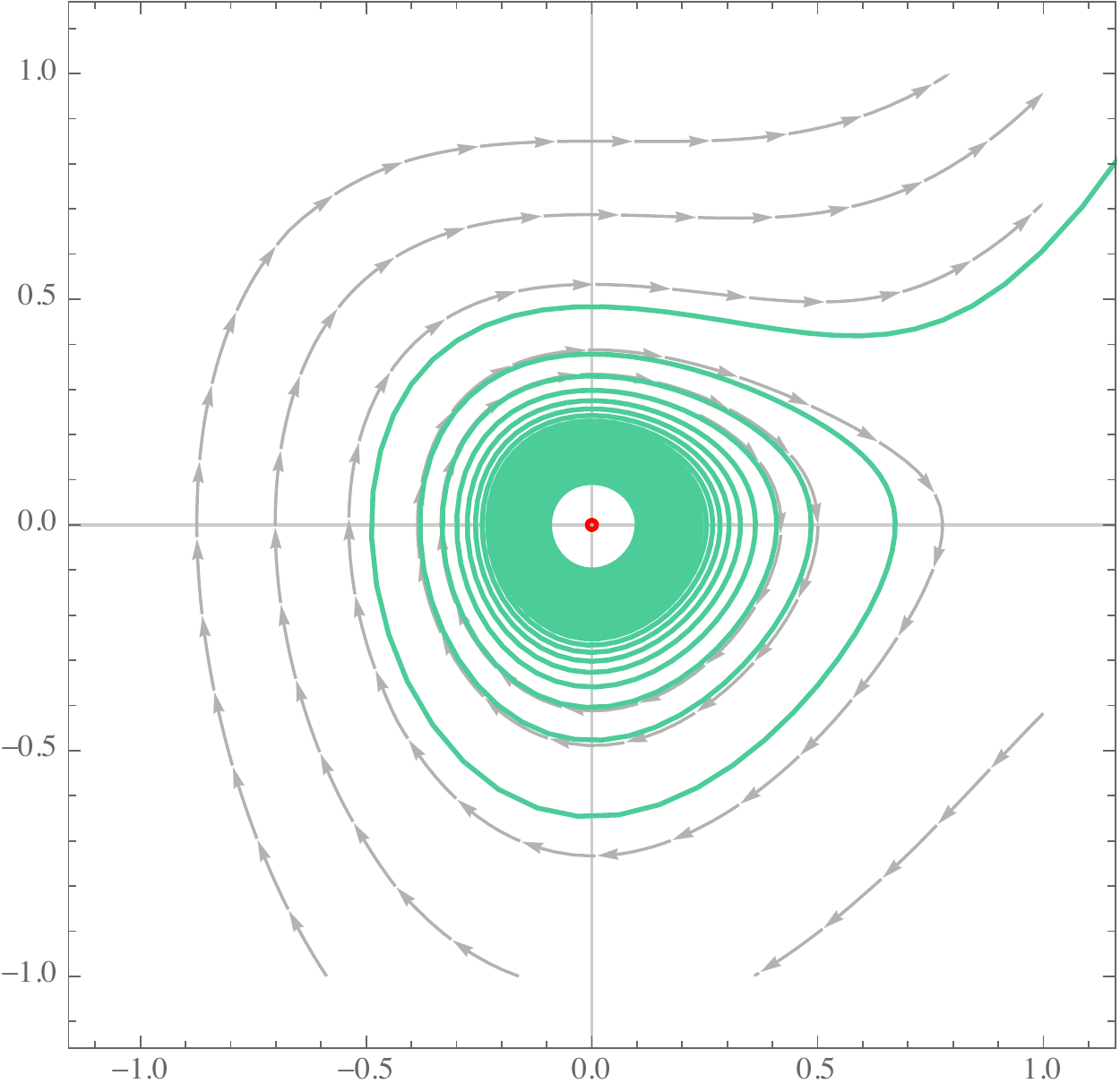}}
\subfigure[$c= 0.005$]{\label{figHopfBFpos}\includegraphics[scale=.45, clip=true]{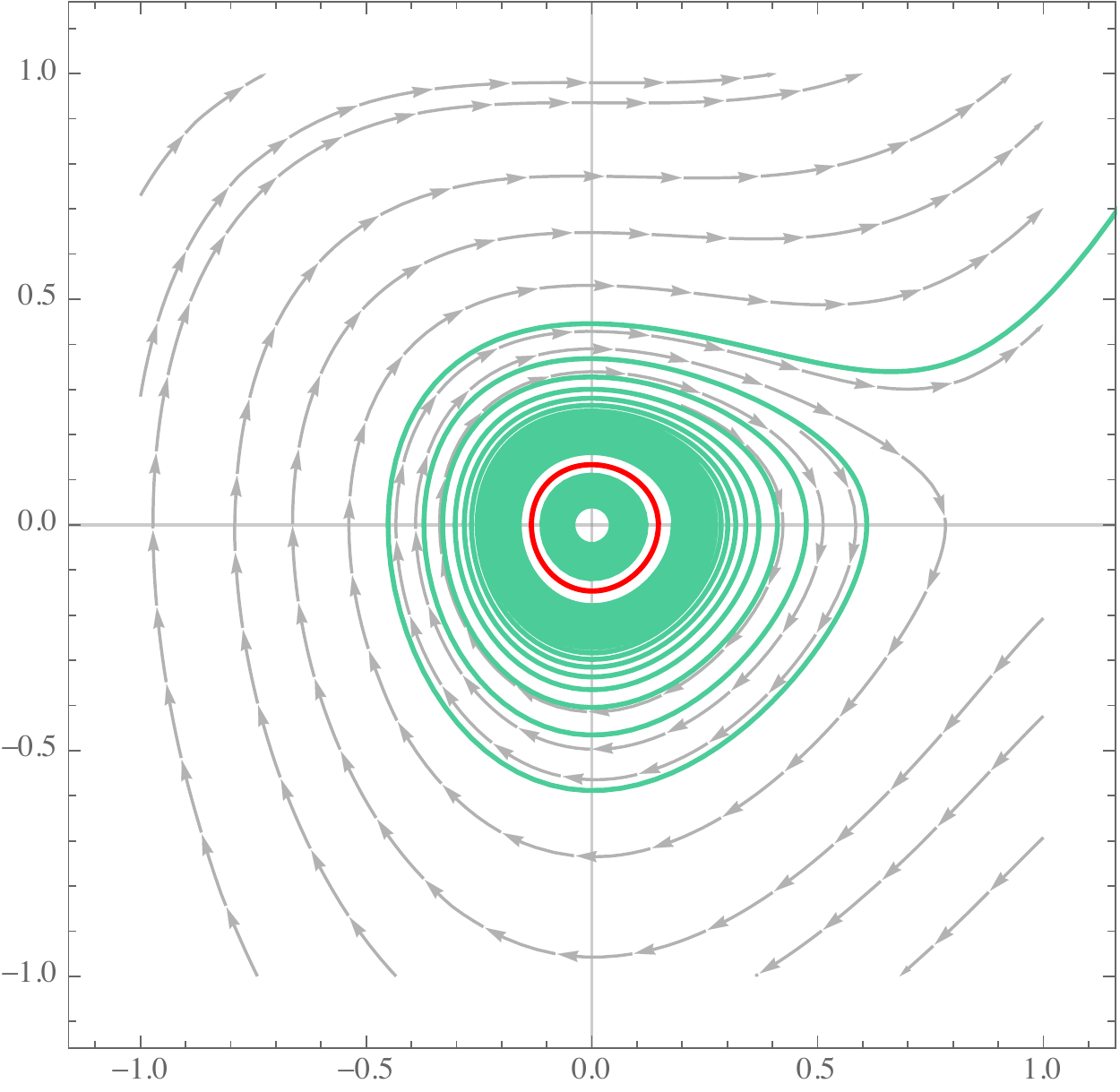}}
\subfigure[$c= 0.005$]{\label{figHopfBFwave}\includegraphics[scale=.45, clip=true]{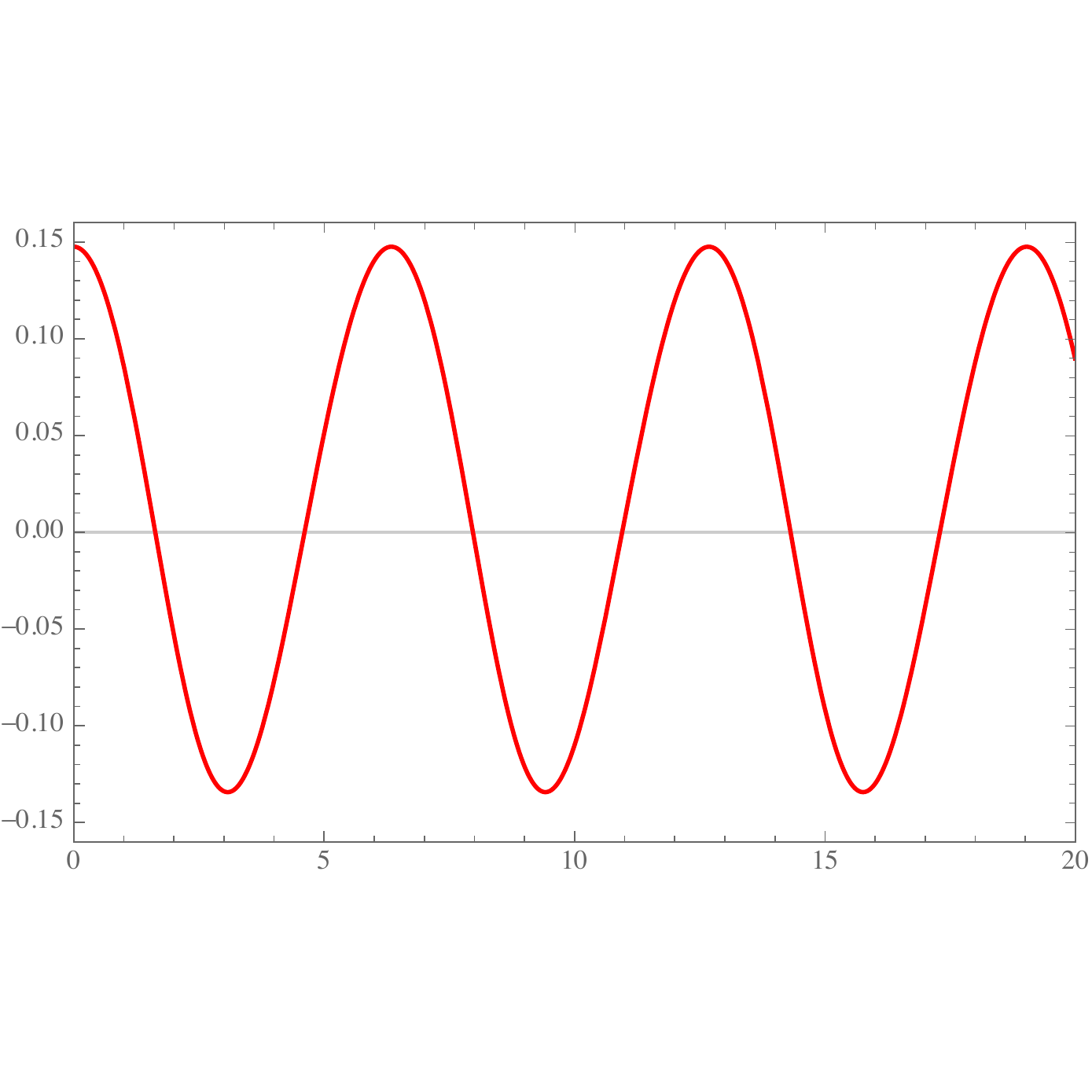}}
\end{center}
\caption{\small{Emergence of small-amplitude waves for the Burgers-Fisher equation \eqref{eqBF}. Panel (a) shows the phase portrait of system \eqref{firstos} for the speed value $c = -0.05$. Numerical solutions are shown in light green color. Panel (b) shows the case when $c = 0$, the parameter value where a subcritical Hopf bifurcation occurs. Panel (c) shows the case where $c = 0.005$: the orbit in red is a numerical approximation of the unique small amplitude periodic wave for this speed value. Panel (d) shows the graph (in red) of the approximated periodic wave $\varphi$ as a function of the Galilean variable $z = x -ct$ (color online).}}\label{figHopfBF}
\end{figure}

For the logistic reaction function \eqref{BFg} under consideration, the function $\gamma = \gamma(u)$ defined in \eqref{defPsi} is given by
\begin{equation}
\label{gamFisher}
\gamma(u) = \frac{1}{\sqrt{3}} \sqrt{1-3u^2 + 2 u^3}, \qquad u \in (-\tfrac{1}{2},1),
\end{equation}
whereupon we can compute the integrals defined in \eqref{lasIs}. For instance, $I_0$ and $I_1$ can be computed exactly:
\begin{equation}
\label{I0Fisher}
I_0 = \int_{-\tfrac{1}{2}}^1 \gamma(s) \, ds = \frac{1}{\sqrt{3}} \int_{-\tfrac{1}{2}}^1 \sqrt{1-3s^2+2s^3} \; ds 
= \frac{3}{5},
\end{equation}
\[
I_1 = \int_{-\tfrac{1}{2}}^1 f'(s) \gamma(s) \, ds = \frac{1}{\sqrt{3}}  \int_{-\tfrac{1}{2}}^1 s \sqrt{1-3s^2+2s^3} \, ds = \frac{3}{35}.
\]
The values of $L$ and $J$ are given by the following elliptic integrals, which are approximated numerically,
\begin{equation}
\label{LFisher}
L = 2 \int_{-\tfrac{1}{2}}^1 \sqrt{\frac{1-4s^3+3s^4}{1-3s^2+2s^3}} \,\; ds \approx 4.07339,
\end{equation}
\[
J = 2 \int_{-\tfrac{1}{2}}^1 s \, \sqrt{\frac{1-4s^3+3s^4}{1-3s^2+2s^3}} \,\; ds \approx 0.69062.
\]
Thus, the non-degeneracy condition \eqref{H5} holds as $I_0 J \approx 0.415237 \neq L I_1 \approx 0.349148$. Notice that the value of the speed of the homoclinic orbit from which the periodic loops with large period bifurcate is
\[
c_1 = \frac{I_1}{I_0} = \frac{1}{7}.
\]
This shows, for instance, that the saddle condition \eqref{H6} holds, inasmuch as $f'(1) = 1 \neq c_1$. Therefore, the Burgers-Fisher equation satisfies hypotheses \eqref{H1} thru \eqref{H6} of this paper.

Finally observe that, since $f'(1) =1 > c_1 = 1/7$, then from Theorem \ref{theoexistlp} the family of periodic waves with large period emerge for speed values in a neighborhood \emph{below} the value $c_1 = 1/7$, that is, for $c \in (\tfrac{1}{7} - \ep_1, \tfrac{1}{7})$ with $\ep_1 > 0$ small. Figure \ref{FigHomoBF} shows a numerical approximation of the homoclinic loop to system \eqref{firstos} with speed $c_1$ based on the saddle $P_1 = (1,0)$ (dashed line in blue), and a large-period wave from the family with speed $c \approx c_1 - 0.05$ (continuous line in orange). This is a family of spectrally unstable periodic waves in view of Theorem \ref{theolargei}.

\begin{figure}[t]
\begin{center}
\includegraphics[scale=.5, clip=true]{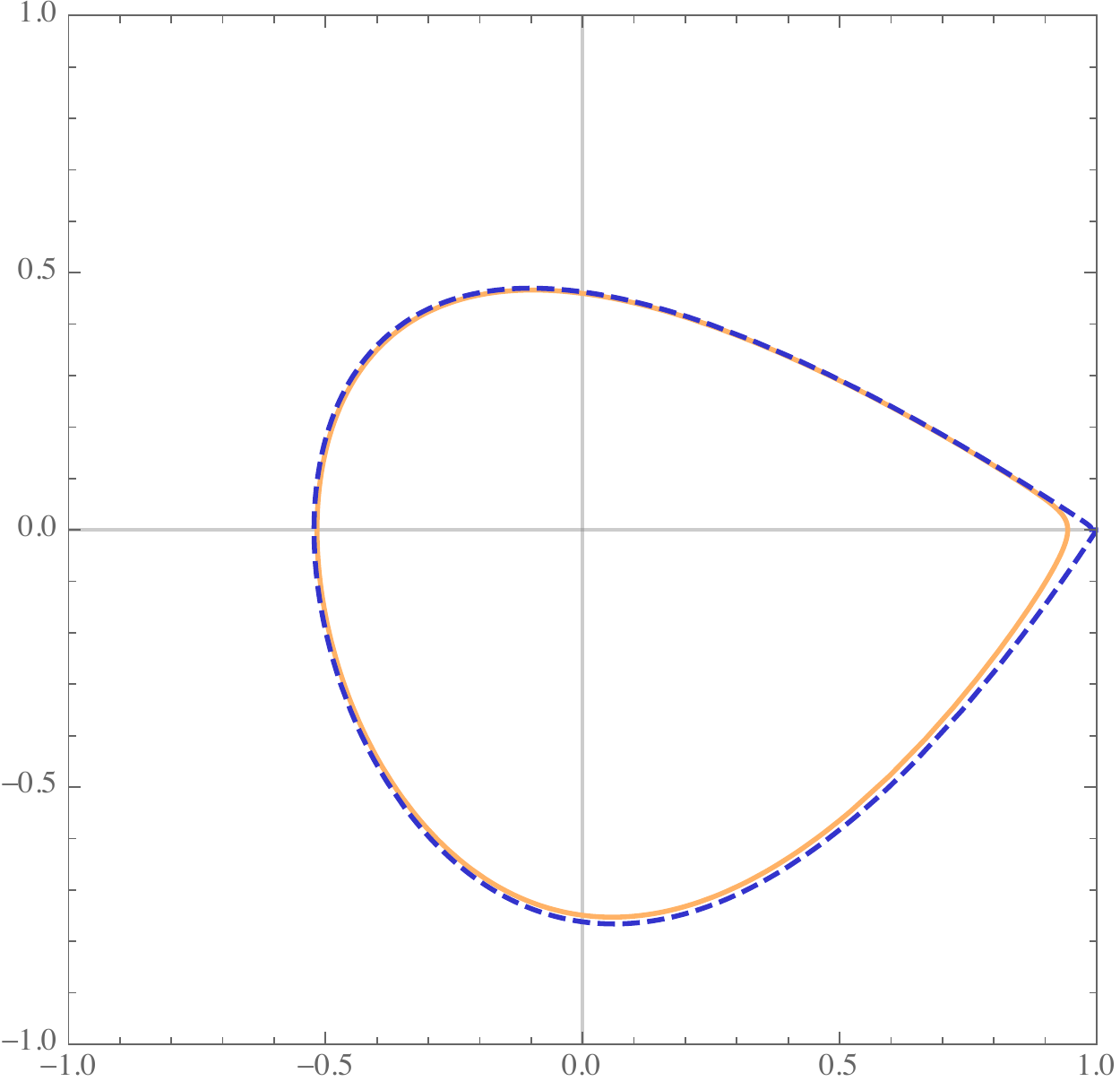}
\end{center}
\caption{\small{Numerical approximation of the homoclinic loop for the Burgers-Fisher equation \eqref{eqBF} with speed value $c_1 = 1/7$ (in blue, dashed line) and the periodic wave nearby with speed value $c_1  - \epsilon$, $\epsilon \approx 0.05$ (solid, orange line; color online).}}\label{FigHomoBF}
\end{figure}

\subsection{Logistic Buckley-Leverett model}
\label{secLogBL}

Consider the following viscous balance law
\begin{equation}
\label{LogBLmodel}
u_t + \partial_x \left( \frac{u^2}{u^2 + \tfrac{1}{2}(1-u)^2}\right) = u_{xx} + u(1-u), \qquad x \in \R, \; \; t >0.
\end{equation}
The nonlinear flux function is the well-known Buckley-Leverett function \cite{BuLe42},
\begin{equation}
\label{BLf}
f(u) = \frac{u^2}{u^2 + \tfrac{1}{2}(1-u)^2},
\end{equation}
which is a relatively simple scalar model that captures the main features of two phase fluid flow in a porous medium. Given that $f$ is not uniformly convex, it allows the emergence of non-classical wave solutions to the Riemann problem for the associated conservation law (see, e.g., \cite{LeF02,Lv02}). When applied to model oil recovery, the two phases correspond to pure oil ($u=0$) and pure water ($u = 1$). Hence, in typical applications the values of $u$ range in $[0,1]$ and, in addition, there is no production term. In this case, we allow values of $u \in \R$ in order to capture the emergence of periodic waves. The production term is, as in the previous example, the logistic reaction function \eqref{BFg}.





Clearly, the functions $f$ and $g$ in \eqref{BLf} and \eqref{BFg} satisfy assumptions \eqref{H1}, \eqref{H2} and \eqref{H3}. Moreover, $g'(u) = 1-2u$, $g''(u) = -2$ and computing the derivatives of $f$ yields
\[
\begin{aligned}
f'(u) &= \frac{u(1-u)}{\big(u^2 + \tfrac{1}{2}(1-u)^2\big)^2}, &  f''(u) = \frac{4(1-9u^2+6u^3)}{\big( 1-2u + 3u^2\big)^3},\\
f'''(u)  &= -\frac{24(-1+6u-18u^3+9u^4)}{\big( 1-2u + 3u^2\big)^4}.
\end{aligned}
\]
Whence the value of $\overline{a}_0$ is given by
\[
\overline{a}_0 = f'''(0) - \frac{f''(0) g''(0)}{\sqrt{g'(0)}} = 32,
\]
and the genericity condition \eqref{H4} holds. Since $\overline{a}_0 > 0$ then from Theorem \ref{theosmallw} we know there exist a family of small amplitude periodic waves for each speed value $c \in (0, \ep_0)$ for some small $0 < \ep_0 \ll 1$, because $c_0 = f'(0) = 0$ in this case, and this corresponds to a subcritical Hopf bifurcation in which the periodic waves are unstable as solutions to the dynamical system \eqref{firstos} with $c = c(\ep)$. Their fundamental period is approximately $2 \pi$ in view of formula \eqref{fundphopf}. Figure \ref{figHopfBL} shows the phase portraits of system \eqref{firstos} for equation \eqref{LogBLmodel} and different values of $c \sim 0$. Figure \ref{figHopfBLneg} shows the phase plane for $c = -0.05$, in which the origin is a repulsive node; Figure \ref{figHopfBL0} shows the case with the bifurcation value of the speed, $c= 0$; and Figure \ref{figHopfBLpos} shows the case with $c = 0.0025$ and the orbit in red color is a numerical approximation of the unique small amplitude periodic wave for this speed value, the origin is an attractive node and nearby solutions inside the periodic orbit approach zero. Panel \ref{figHopfBLwave} shows the graph (in red) of the periodic wave $\varphi$ as a function of the Galilean variable $z = x -ct$.

\begin{figure}[t]
\begin{center}
\subfigure[$c = -0.05$]{\label{figHopfBLneg}\includegraphics[scale=.45, clip=true]{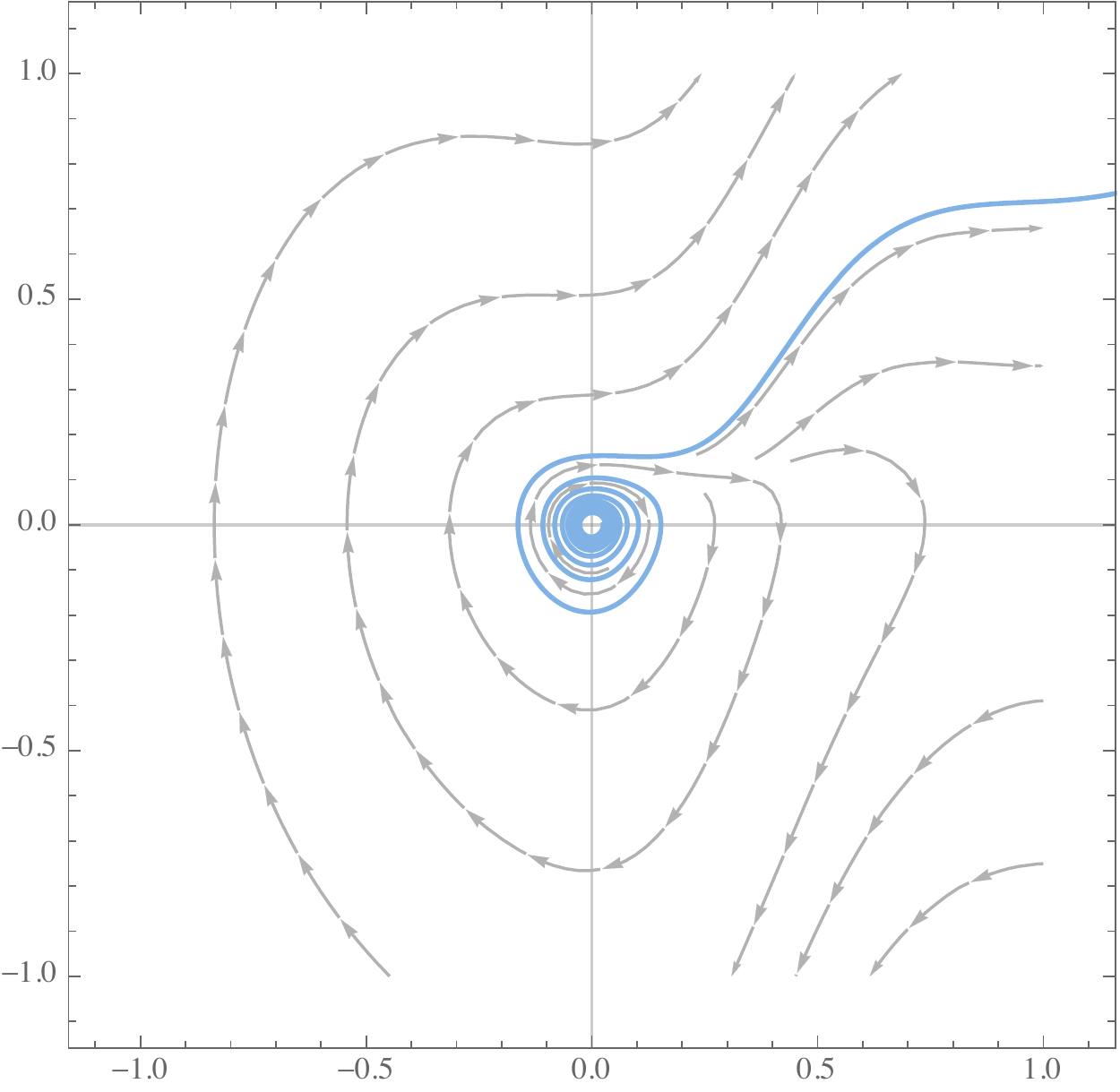}}
\subfigure[$c=0$]{\label{figHopfBL0}\includegraphics[scale=.45, clip=true]{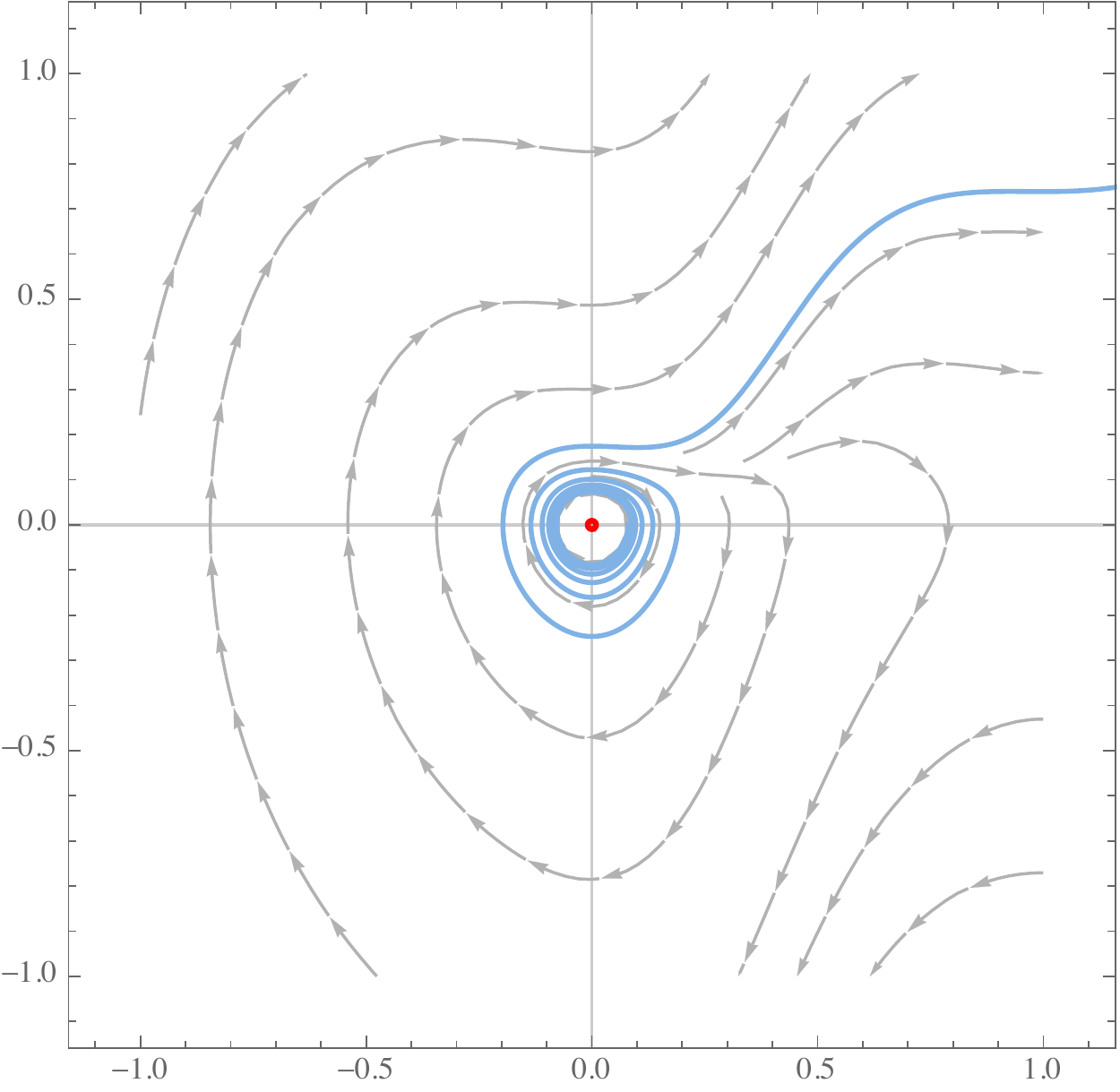}}
\subfigure[$c= 0.0025$]{\label{figHopfBLpos}\includegraphics[scale=.45, clip=true]{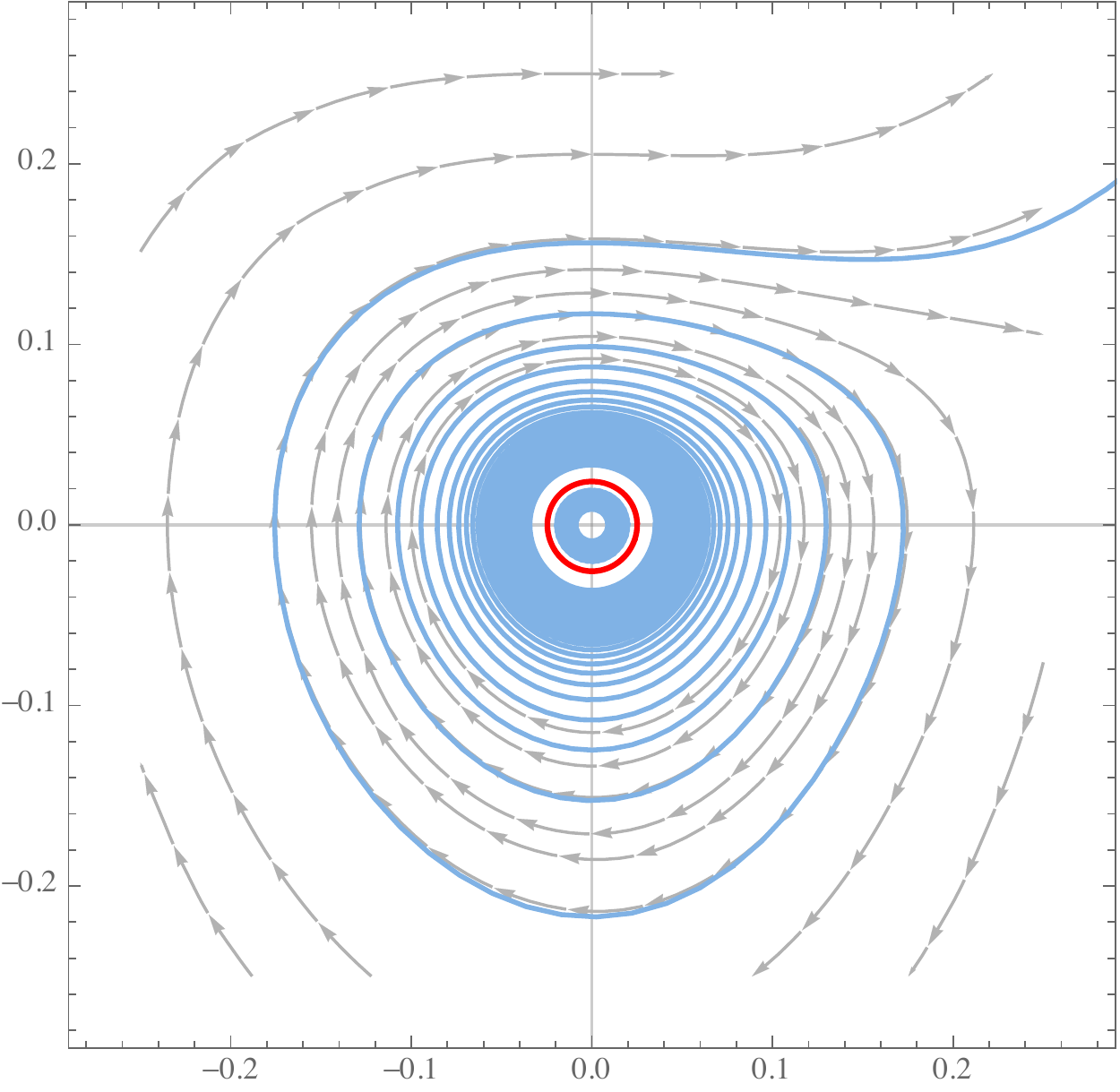}}
\subfigure[$c= 0.0025$]{\label{figHopfBLwave}\includegraphics[scale=.45, clip=true]{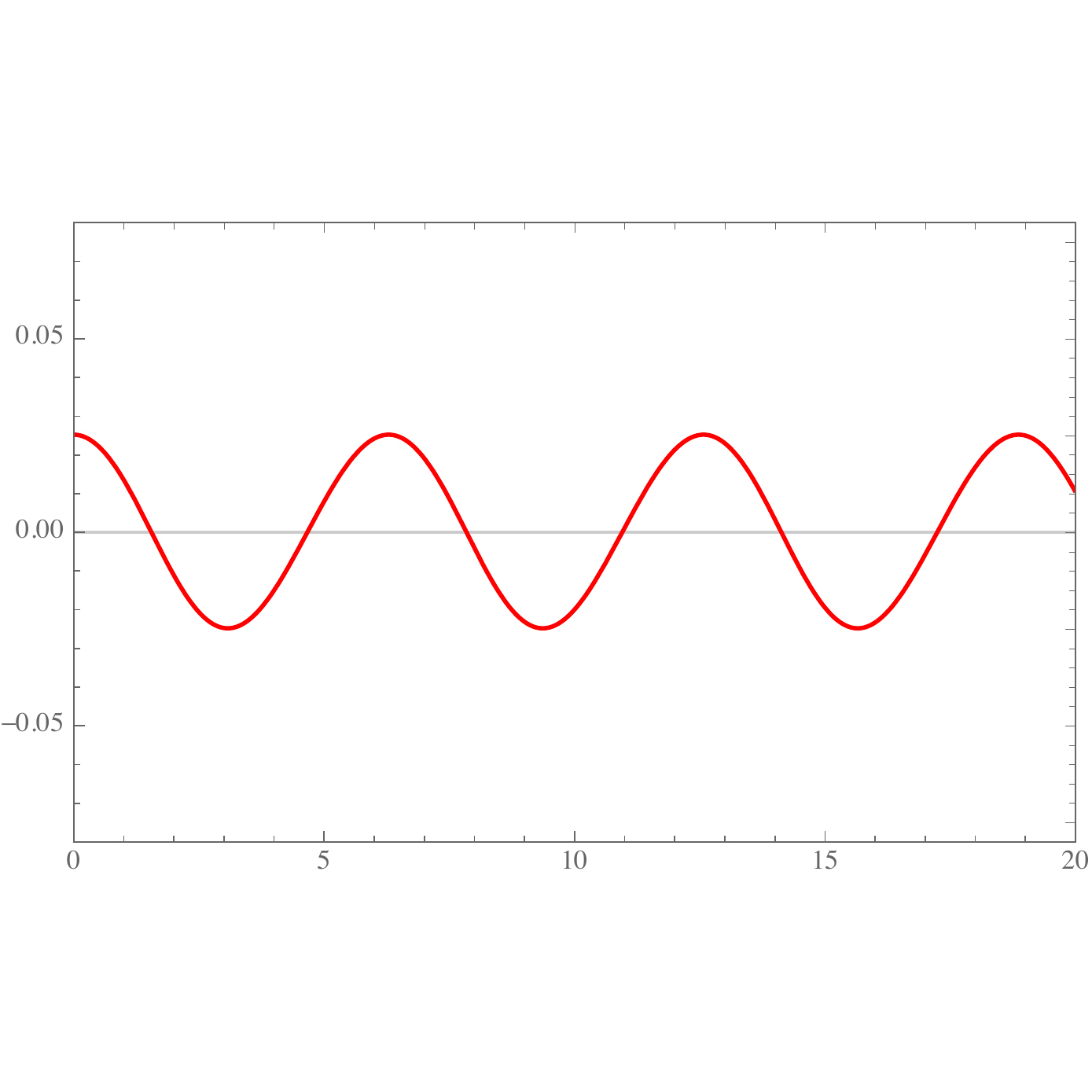}}
\end{center}
\caption{\small{Emergence of small-amplitude waves for the logistic Buckley-Leverett model \eqref{LogBLmodel}. Numerical solutions are shown in light blue color. Panel (a) shows the phase portrait of system \eqref{firstos} for the speed value $c = -0.05$; the origin is a repulsive node and all nearby solutions move away. Panel (b) shows the case when $c = 0$, the parameter value where a subcritical Hopf bifurcation occurs. Panel (c) shows the case where $c = 0.0025$: the orbit in red is a numerical approximation of the unique small amplitude periodic wave for this speed value. Panel (d) shows the graph (in red) of the periodic wave $\varphi$ as a function of the Galilean variable $z = x -ct$ (color online).}}\label{figHopfBL}
\end{figure}

Since the production term is the logistic function \eqref{BFg} as in the previous example, the function $\gamma = \gamma(u)$ is also given by \eqref{gamFisher} and, from \eqref{I0Fisher}, we also have that $I_0 = 3/5$. Upon substitution of the flux function \eqref{BLf} we obtain
\[
I_1 = \int_{-1/2}^1 f'(s) \gamma(s) \, ds = \int_{-1/2}^1 \frac{s(1-s)\sqrt{1-3s^2+2s^3}}{\big(s^2 + \tfrac{1}{2}(1-s)^2\big)^2} \, ds = 0.353458.
\]
Hence, the value of the speed of the homoclinic orbit from which the periodic loops with large period bifurcate is
\[
c_1 = \frac{I_1}{I_0} = 0.589097.
\]
This shows, for instance, that the saddle condition \eqref{H6} holds, inasmuch as $f'(1) = 0$. The values of $L$ and $J$ in \eqref{lasIs} are given by the following elliptic integrals, whose values are approximated numerically,
\begin{equation}
\label{LFisher}
L = 2 \int_{-1/2}^1 \sqrt{\frac{1-4s^3+3s^4}{1-3s^2+2s^3}} \,\; ds \approx 4.07339,
\end{equation}
\[
J = 2 \int_{-1/2}^1 \frac{s(1-s)}{\big(s^2 + \tfrac{1}{2}(1-s)^2\big)^2} \sqrt{\frac{1-4s^3+3s^4}{1-3s^2+2s^3}} \;\, ds \approx 1.62723.
\]
Thus, the non-degeneracy condition \eqref{H5} holds as $I_0 J \approx 0.976335 \neq L I_1 \approx 1.43977$. These calculations show that the logistic Buckley-Leverett model \eqref{LogBLmodel} satisfies the hypotheses of this paper.

In view that $c_1 > f'(1) = 0$, Theorem \ref{theoexistlp} then implies that the family of periodic waves with large period emerge for speed values in a neighborhood \emph{above} the value $c_1$, that is, for $c \in (0.5891, 0.5891 + \ep_1)$ with $\ep_1 > 0$ small. Figure \ref{FigHomoBL} shows a numerical approximation of the homoclinic loop to system \eqref{firstos} with speed $c_1$ (dashed line in blue) and a large-period wave from the family with speed $c \approx c_1 + 0.025$ (continuous line in orange). This is a family of spectrally unstable periodic waves in view of Theorem \ref{theolargei}.

\begin{figure}[t]
\begin{center}
\includegraphics[scale=.5, clip=true]{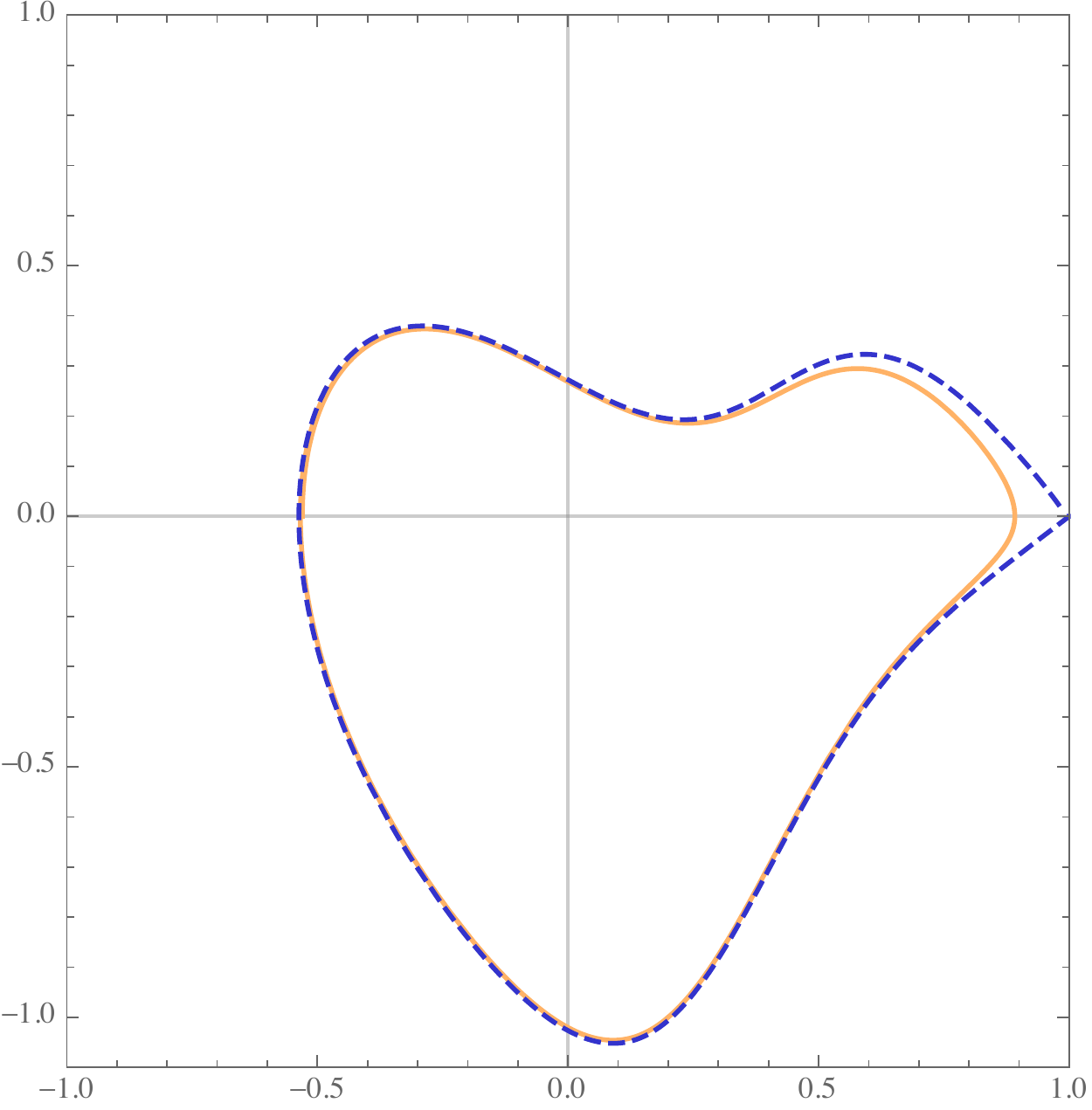}
\end{center}
\caption{\small{Numerical approximation of the homoclinic loop for the logistic Buckley-Leverett equation \eqref{LogBLmodel} with speed value $c_1 \approx 0.5891$ (in blue, dashed line) and the periodic wave nearby with speed value $c_1  + \epsilon$, $\epsilon \approx 0.025$ (solid, orange line; color online).}}\label{FigHomoBL}
\end{figure}

\subsection{Modified generalized Burgers-Fisher equation}
\label{secmBF}

The family of equations
\begin{equation}
\label{famgBF}
u_t + a u^m u_x = b u_{xx} + k u(1-u^m),
\end{equation}
where $a,b,k \in \R$ and $m \in \N$ are constants, is known in the literature as the \emph{generalized Burgers-Fisher equation} \cite{ChZh04,KaEl-S04,Vall18a}. The family underlies many types of traveling wave solutions: pulses, fronts, periodic wavetrains, both bounded or unbounded (see \cite{ZLZ13,LuYJ07,Vall18a} and the many references therein). 

As a final example, let us consider the following viscous balance law
\begin{equation}
\label{mBF}
u_t + \partial_x \Big( \tfrac{1}{4}u^4 - \tfrac{1}{3} u^3\Big) = u_{xx} + u-u^4, \qquad x \in \R, \; t >0,
\end{equation}
which is a modification of the generalized Burgers-Fisher equation with parameter values $a=1$, $b=1$, $k=1$, $m = 3$. We call it a \emph{modified generalized Burgers-Fisher equation} and it corresponds to nonlinear flux and reaction functions given by
\begin{equation}
\label{mBFf}
f(u) =  \frac{1}{4}u^4 - \frac{1}{3} u^3,
\end{equation}
and
\begin{equation}
\label{mBFg}
g(u) = u-u^4,
\end{equation}
respectively. Clearly, this pair satisfies assumptions \eqref{H1}, \eqref{H2} and \eqref{H3}, where the unique value $u_* \approx -0.72212$ such that \eqref{H3} holds is approximated numerically.
Upon calculation of the derivatives, one finds that
\[
\overline{a}_0 = -2 \neq 0,
\]
which means that hypothesis \eqref{H4} holds and the family of small amplitude waves occur for \emph{negative} speed values $c(\ep) = - \ep < 0 = c_0 = f'(0)$, sufficiently small. From Theorem \ref{thmexbded} and from Andronov-Hopf theory a \emph{supercritical} Hopf bifurcation occurs and the small amplitude periodic orbits are stable as solutions to the dynamical system \eqref{firstos} with speed value $c(\ep) = - \ep$. Figure \ref{figHopfmBF} illustrates the emergence of small-amplitude waves for the modified generalized Burgers-Fisher equation \eqref{mBF}. As before, we present the phase portraits of system \eqref{firstos} for different speed values; the numerically approximated solutions are shown in light purple color. Figure \ref{figHopfmBFPos} shows the phase plane  for the speed value $c = 0.05$; the origin is an attractive node and all nearby solutions converge at the origin. Figure \ref{figHopfmBF0} shows the case when $c = 0$, the parameter value where the supercritical Hopf bifurcation occurs; the origin is a center and solutions move away if they start sufficiently far from the origin and rotate locally around a linearized center otherwise. Figure \ref{figHopfmBFNeg} shows the case where $c = -0.005$: the orbit in red is a numerical approximation of the unique small amplitude periodic wave for this fixed speed, the origin is a repulsive node and nearby solutions both inside and outside the periodic orbit approach the periodic wave because it is stable as a solution to system \eqref{firstos}. Figure \ref{figHopfmBFwave} shows the graph (in red) of the periodic wave $\varphi$ as a function of the Galilean variable $z = x -ct$.

\begin{figure}[ht]
\begin{center}
\subfigure[$c = 0.05$]{\label{figHopfmBFPos}\includegraphics[scale=.45, clip=true]{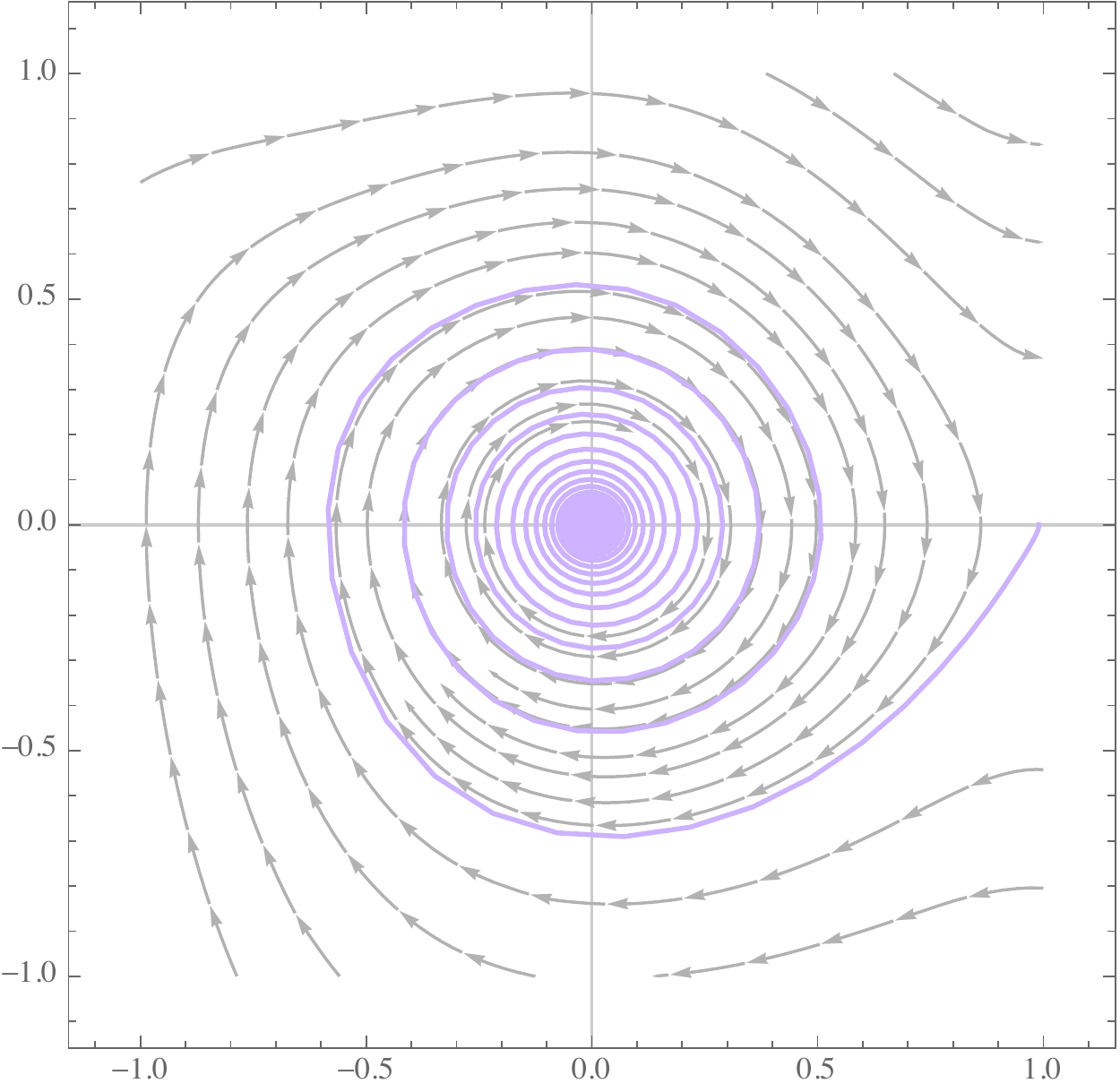}}
\subfigure[$c=0$]{\label{figHopfmBF0}\includegraphics[scale=.45, clip=true]{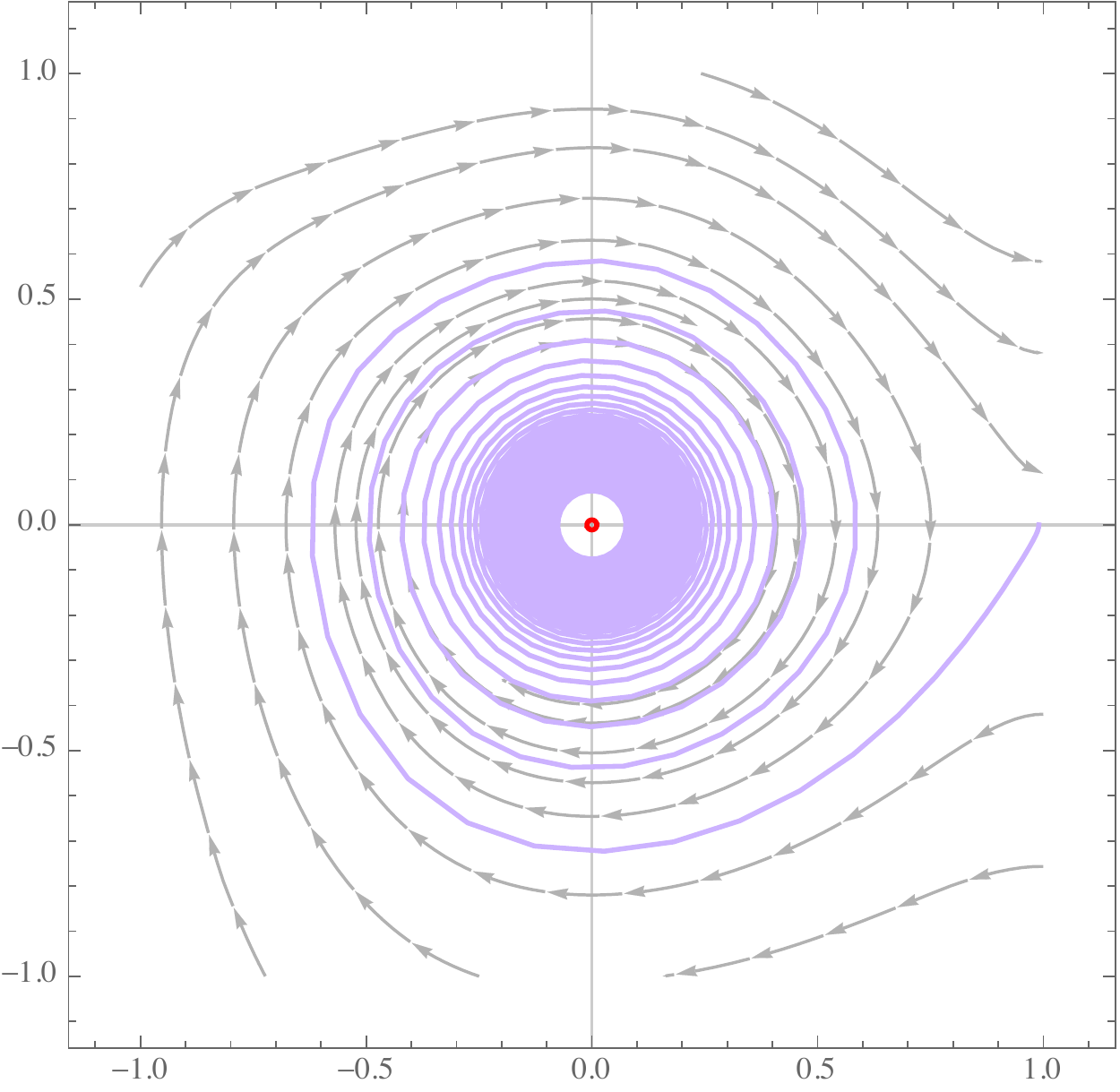}}
\subfigure[$c= -0.005$]{\label{figHopfmBFNeg}\includegraphics[scale=.45, clip=true]{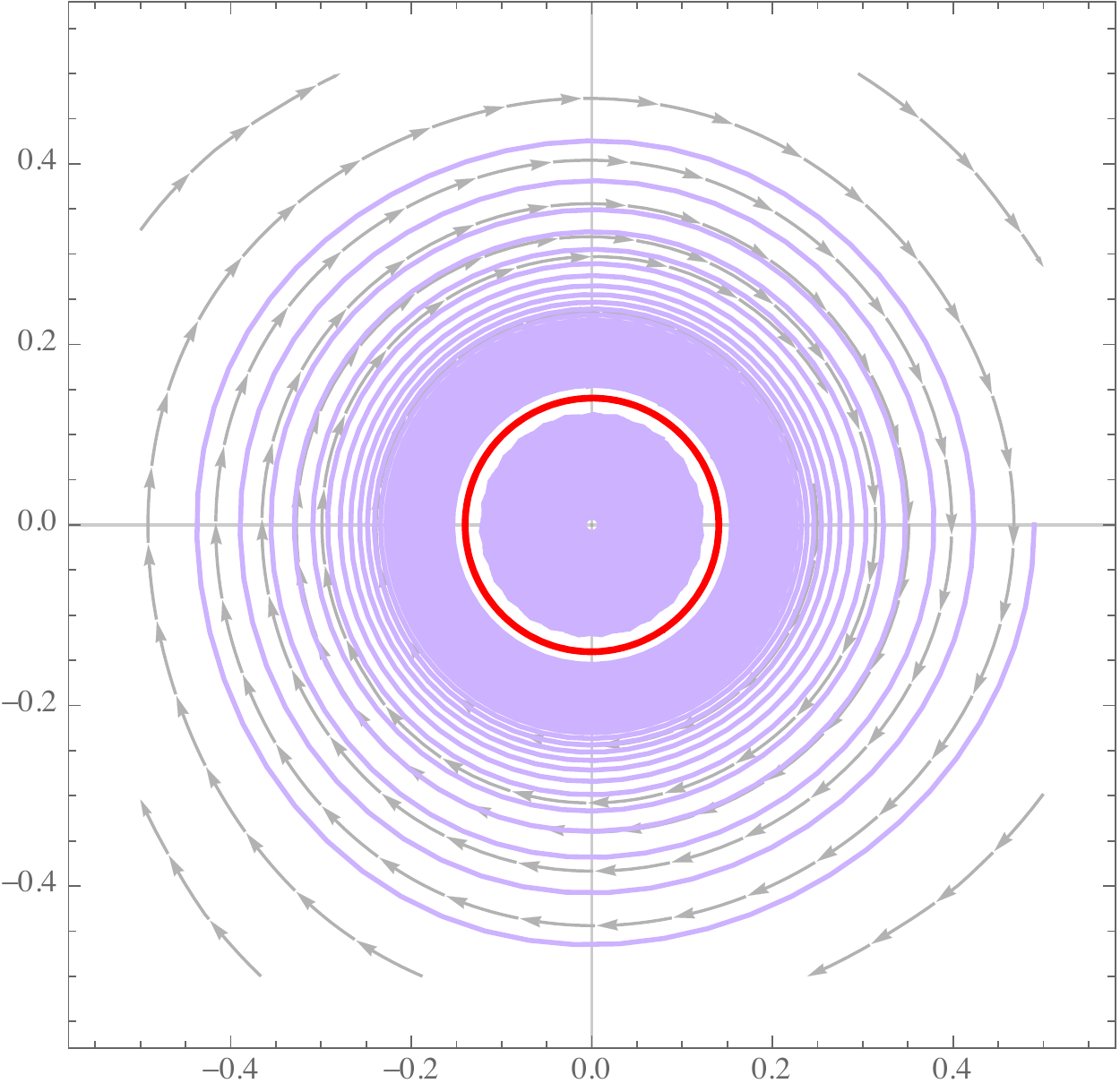}}
\subfigure[$c= -0.005$]{\label{figHopfmBFwave}\includegraphics[scale=.45, clip=true]{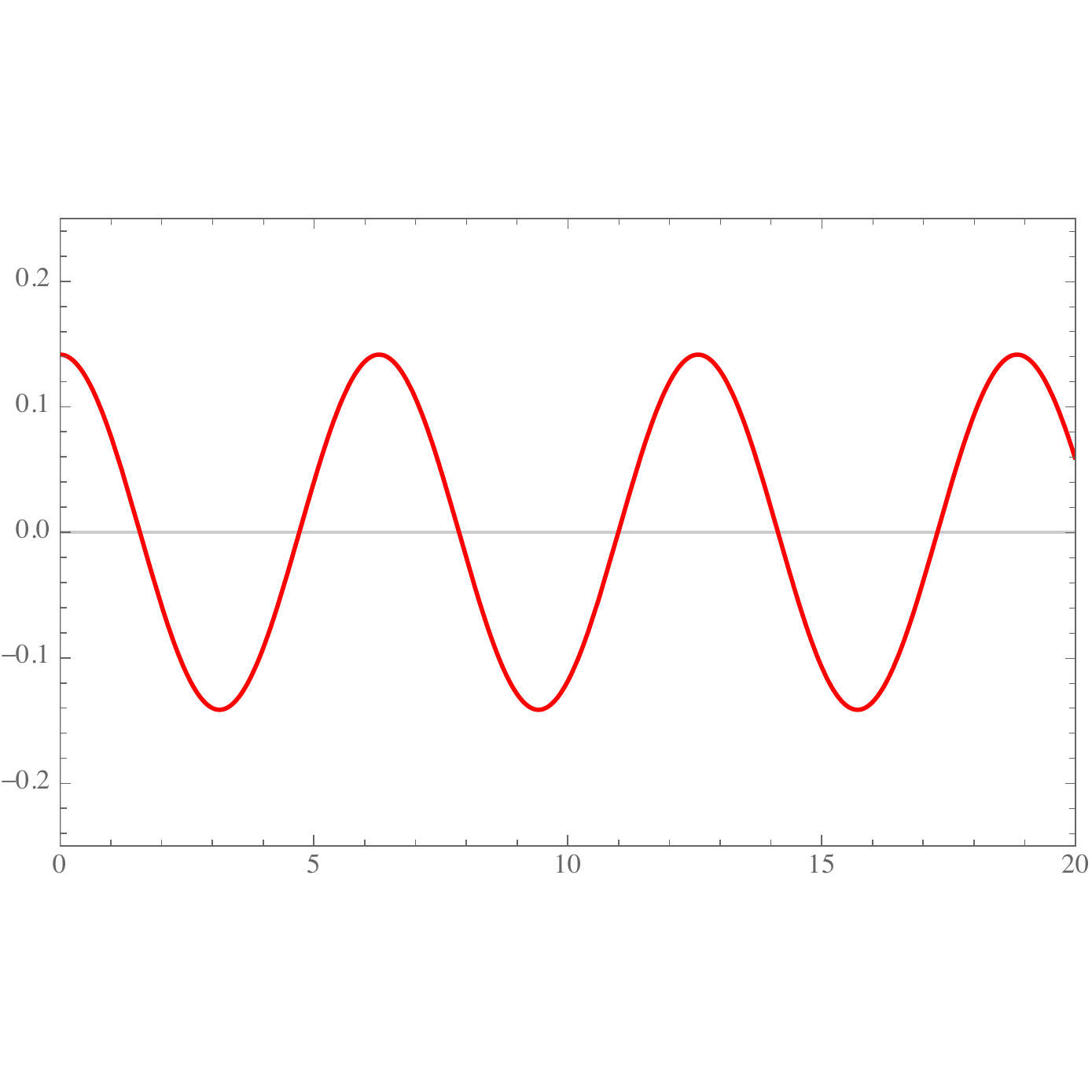}}
\end{center}
\caption{\small{Emergence of small-amplitude waves for the modified generalized Burgers-Fisher equation \eqref{mBF}. The bifurcation value for this case is $c_0 = f'(0) = 0$. Panel (a) shows the phase portrait of system \eqref{firstos} for the speed value $c = 0.05$. Panel (b) shows the case when $c = 0$, the parameter value where the supercritical Hopf bifurcation occurs. Panel (c) shows the case where $c = -0.005$: the orbit in red is a numerical approximation of the unique small amplitude periodic wave for this speed value. Panel (d) shows the graph (in red) of the periodic wave $\varphi$ as a function of the Galilean variable $z = x -ct$ (color online).}}\label{figHopfmBF}
\end{figure}

Like in the previous examples, one can numerically approximate the integrals in \eqref{lasIs}, namely
\[
\begin{aligned}
I_0 &= \frac{1}{\sqrt{5}} \int_{u_*}^1 \sqrt{3-5s^2+2s^5} \; ds \approx 0.979027, \\
I_1 &= \frac{1}{\sqrt{5}} \int_{u_*}^1 (s^3 - s^2)\sqrt{3-5s^2+2u^5}   \; ds \approx -0.129571,
\end{aligned}
\]
and,
\[
\begin{aligned}
L &= 2 \int_{u_*}^1 \sqrt{\frac{3-8s^5+5s^8}{3-5s^2+2s^5}}\; ds \approx 5.02904,\\
J&= 2 \int_{u_*}^1 (s^3 - s^2) \sqrt{\frac{3-8s^5+5s^8}{3-5s^2+2s^5}}\; ds \approx -1.27529,
\end{aligned}
\]
yielding the critical value of the homoclinic speed,
\[
c_1 = \frac{I_1}{I_0} \approx -0.132347,
\]
and, in turn, the verification of hypotheses \eqref{H5} and \eqref{H6}: $I_0 J \approx -1.24854 \neq LI_1 \approx -0.651619$ and $c_1 \neq f'(1) = 0$. Therefore, we conclude that the modified generalized Burgers-Fisher equation \eqref{mBF} satisfies hypotheses \eqref{H1} thru \eqref{H6} of this paper. Finally, observe that since $c_1 < 0 = f'(1)$ then Theorem \ref{theoexistlp} implies that the family of large period waves emerge for speed values below $c_1 \approx -0.1323$, that is, for $c \in (c_1 - \ep_1, c_1)$ with $\ep_1 > 0$ small. Figure \ref{FigHomomBF} shows a numerical approximation of the homoclinic loop to system \eqref{firstos} with speed $c_1$ (dashed line in blue) and a large-period wave from the family with speed $c \approx c_1 - 0.05$ (continuous line in orange). Once again, this is a family of spectrally unstable periodic waves in view of Theorem \ref{theolargei}.

\begin{figure}[t]
\begin{center}
\includegraphics[scale=.5, clip=true]{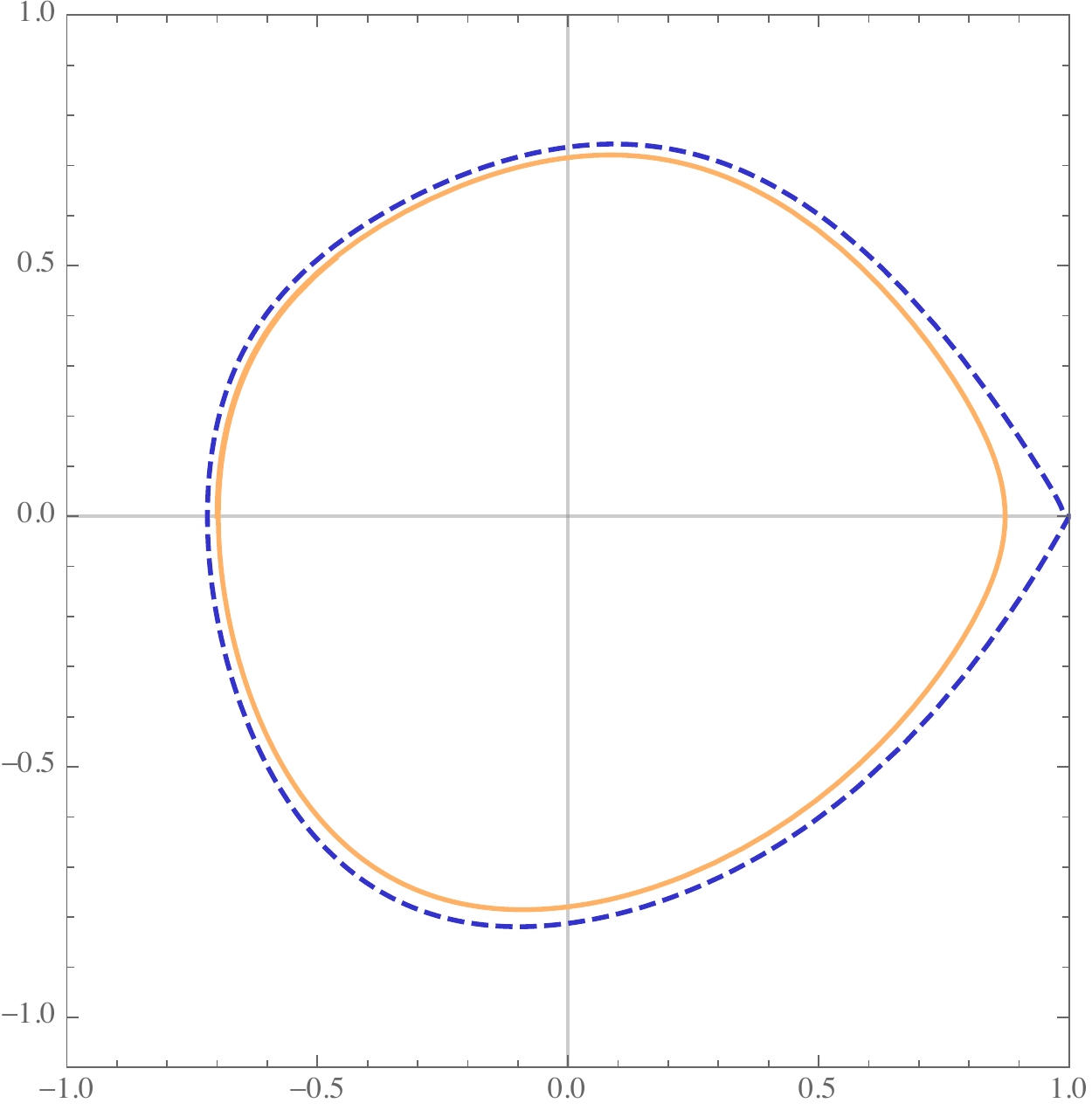}
\end{center}
\caption{\small{Numerical approximation of the homoclinic loop for the modified Burgers-Fisher equation \eqref{mBF} with speed value $c_1 \approx -0.1323$ (in blue, dashed line) and the periodic wave nearby with speed value $c_1  - \epsilon$, $\epsilon \approx 0.05$ (solid, orange line; color online).}}\label{FigHomomBF}
\end{figure}

\begin{remark}
The generalized Burgers-Fisher equation \eqref{famgBF} with the above parameter values ($a=1$, $b=1$, $k=1$, $m = 3$), namely,
\[
u_t + u^3 u_x = u_{xx} + u-u^4,
\]
\emph{does not} satisfy the genericity condition \eqref{H4} as the reader may easily verify. Hence, we are not able to apply the existence Theorem \ref{thmexbded}. This does not mean, of course, that small amplitude periodic waves may not emerge from a higher order (degenerate) Hopf bifurcation, a calculation that we do not pursue here.
\end{remark}

\section{Discussion}
\label{secdisc}


In this paper we have shown that, for a relatively simple (but large) class of scalar viscous balance laws, there exist two families of periodic traveling wave solutions. Both families emerge from bifurcation analyses: small-amplitude, finite period waves are generated via a Hopf bifurcation around a critical wave speed; on the other hand, and as it happens in many circumstances, traveling pulses on the real line are accompanied by periodic waves with arbitrarily large periods, emerging from what is called a homoclinic bifurcation.

We also examine the Floquet spectrum of the linearization around the two families of periodic wavetrains. In the small-amplitude case, it is shown that all the waves belonging to the family are spectrally unstable by the application of standard perturbation theory of linear operators. The instability is due to a structural assumption on the model equations: the instability of the origin as an equilibrium point of the reaction generates an unstable eigenvalue of an associated constant coefficient operator (the linearization around the zero solution), from which the linearization of a small-amplitude wave represents a perturbation. This approximation technique has been recently investigated by K\'{o}llar \emph{et al.} \cite{KDT19} to study the spectral stability of small-amplitude waves for scalar equations with Hamiltonian structure. It is remarkable that the same method can be applied to a family of evolution equations like \eqref{vbl} lacking any special structure whatsoever. In the case of large period waves, we verify the conditions under which the seminal result by Gardner \cite{Grd2} (of convergence of periodic spectra in the infinite-period limit to that of the underlying homoclinic wave) applies. The typical instability of the traveling pulse then produces unstable spectrum curves for the linearized operator around the periodic wave, proving in this fashion, spectral instability of each member of the family. We also present some examples which satisfy the hypotheses of this paper, among which the well-known (viscous) Burgers-Fisher equation stands out.

A natural question following our analysis is whether these periodic wavetrains are orbitally unstable as solutions to the nonlinear PDE.  For equations with specific structures, it is widely known that the spectral instability of a traveling wave solution is a key prerequisite to show their nonlinear (orbital) instability (see, e.g., \cite{GrilSha90, Lop02, ShaStr00}). In view that viscous balance laws of the form \eqref{vbl} lack special structures (such as symmetries, Hamiltonian form or complete integrability) the study of orbital instability of these periodic waves warrants further investigations. It would be also of interest to explore the relation of our spectral instability result with the more common modulational stability analysis, which is specialized to long wavelength perturbations. The modulational stability of both families of periodic waves for the Burgers-Fisher equation will be addressed in a companion paper \cite{AlPl1}.

\section*{Acknowledgements}

The authors thank Peter D. Miller, Robby Marangell, Jaime Angulo Pava and L. Miguel Rodrigues for useful 
conversations during a workshop at Casa Matem\'{a}tica Oaxaca. The work of E. \'{A}lvarez was partially supported by 
CONACyT (Mexico) through a scholarship for doctoral studies, grant no. 307909. The work of R. G. Plaza was partially supported by DGAPA-UNAM, program PAPIIT, grant IN-100318.

\appendix
\section{Non-degeneracy of the homoclinic orbit}
\label{apenon}

In this appendix we prove that the homoclinic orbit from Theorem \ref{theoexisthomo} (or Corollary \ref{corpulse}) satisfies a nondegeneracy condition in the sense established by Beyn \cite{Beyn90b}. Consider general (parametrized) dynamical systems of the form
\begin{equation}
\label{ds}
\frac{dy}{dz} = \bF (y, \mu), \qquad y \in \R^m, \;\; \mu \in \R^p, \, \, z \in \R,
\end{equation}
with $m,p \in \N$, $f \in C^1(\R^m\times \R^p;\R^m)$. Beyn \cite{Beyn90b} calls any pair $(y(z), \mu_*)$ a \emph{connecting orbit pair} if $y = y(z)$ is a solution to \eqref{ds} at $\mu = \mu_*$ for all $z \in \R$ and the limits
\[
y_\pm = \lim_{z \to \pm \infty} y(z),
\]
exist. Since $\bF$ is continuous, necessarily $\bF(y_\pm,\mu_*) = 0$. If $y_+ = y_-$ then the orbit is called homoclinic, and if $y_+ \neq y_-$ then it is called heteroclinic. For any $m \in \N$, $k \in \Z$, $k \geq 0$, let us denote the Banach spaces
\[
X_m^k = \left\{ \phi \in C^k(\R; \R^m) \, : \, \lim_{z \to \pm \infty} \frac{d^j \phi(z)}{dz^j} \, \text{exists for } \, j = 0,1, \ldots, k\right\},
\]
\[
\| \phi \|_{X_m^k} = \sum_{j=0}^k \sup_{z \in \R} \left|\frac{d^j \phi(z)}{dz^j}\right|.
\]
Note that for any $\phi \in X_m^k$ with $k \geq 1$, by the mean value theorem, there holds
\[
\frac{d^j \phi(z)}{dz^j} \to 0,  \qquad \text{as } \, z\to \pm \infty, \;\; \text{all } \, j = 1, \ldots, k.
\]

\begin{definition}[Beyn \cite{Beyn90a,Beyn90b}]
\label{defnondeg}
A connecting orbit pair $(y(z),\mu_*) \in X_m^1 \times \R^p$ of \eqref{ds} is called \emph{non-degenerate} if the following conditions hold:
\begin{itemize}
\item[(a)] The matrices
\[
\bA_\pm = \lim_{z \to \pm \infty} D_y\bF(y(z),\mu_*)
\]
are hyperbolic with stable dimensions $m^s_\pm$.
\item[(b)] $p = m^s_+ + m^s_- -1$.
\item[(c)] The only solutions $(w,\mu) \in X_m^1 \times \R^p$ to the variational system
\[
\frac{dw}{dz} = D_y \bF(y(z),\mu_*) w + D_\mu \bF (y(z),\mu_*) \mu
\]
are $w = k (dy/dz)$ and $\mu = 0$ for some constant $k \in \R$.
\end{itemize}
\end{definition}

Under assumptions \eqref{H1}, \eqref{H2}, \eqref{H3} and \eqref{H5}, let $(\bU,\bV)(z) = (\psi, \psi')(z)$, $z \in \R$, be the homoclinic loop of system \eqref{firstos} from Theorem \ref{theoexisthomo} and Corollary \ref{corpulse} with speed value $c = c_1$. Let us denote
\begin{equation}
\label{coefsbs}
\begin{aligned}
\hat{b}_1(z) &:= c_1 - f'(\psi(z)),\\
\hat{b}_0(z) &:= g'(\psi(z)) - f'(\psi(z)) \psi'(z).
\end{aligned}
\end{equation}
These coefficients are functions of class $C^2$ and uniformly bounded (in view that $\psi(z) \in [u_*,1]$, compact, for all $z \in \R$). Moreover, clearly,
\[
\hat{b}_1(z) \to c_1 - f'(1), \qquad \hat{b}_0(z) \to g'(1),
\]
exponentially as $z \to \pm \infty$. First we need the following auxiliary
\begin{lemma}
\label{lemaux}
Suppose there exist solutions $\zeta \in X_1^2$ and $\eta \in X_1^2$ to
\[
\cB \zeta := \zeta'' + \hat{b}_1(z) \zeta' + \hat{b}_0(z) \zeta = 0,
\]
and to
\[
\cB^* \eta := \eta'' - \hat{b}_1(z) \eta' + (\hat{b}_0(z) - \hat{b}_1'(z)) \eta = 0,
\]
respectively, such that
\[
\zeta, \eta \to 0 \qquad \text{as } \; z \to \pm \infty.
\]
Then, all other solutions $u,v \in X_1^2$ to $\cB u = 0$ and to $\cB^* v = 0$ are multiples of $\zeta$ and $\eta$, respectively.
\end{lemma}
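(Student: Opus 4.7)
The plan is to exploit the asymptotic hyperbolicity of the coefficients $\hat{b}_0,\hat{b}_1$ inherited from the saddle structure of $P_1=(1,0)$, together with a standard asymptotic-integration argument, in order to conclude that both $\cB u=0$ and $\cB^*v=0$ have a one-dimensional space of solutions in $X_1^2$.

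First I would compute the asymptotic limits. From \eqref{expdecay} and the regularity of $f,g$, the coefficients obey
\[
\hat{b}_1(z)\longrightarrow c_1-f'(1),\qquad \hat{b}_0(z)\longrightarrow g'(1),\qquad \hat{b}_1'(z)\longrightarrow 0,
\]
as $z\to\pm\infty$, all at exponential rate $O(e^{-\kappa|z|})$. The limiting constant-coefficient equations for $\cB$ and $\cB^*$ are therefore
\[
u''+(c_1-f'(1))u'+g'(1)u=0\qquad\text{and}\qquad v''-(c_1-f'(1))v'+g'(1)v=0,
\]
whose characteristic polynomials both have product of roots equal to $g'(1)<0$, and therefore possess one strictly negative and one strictly positive real root. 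For $\cB$ these are precisely $\lambda_1(c_1)<0<\lambda_2(c_1)$ from \eqref{evaluessaddle}; for $\cB^*$ they are $-\lambda_2(c_1)<0<-\lambda_1(c_1)$.

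Next, I would invoke Levinson's asymptotic theorem (equivalently, the roughness of exponential dichotomies) on each of the half-lines $[0,\infty)$ and $(-\infty,0]$ to produce, for each of the two equations, a fundamental pair of solutions whose behavior at the relevant infinity matches the two exponential modes of the limiting equation. Now let $u\in X_1^2$ solve $\cB u=0$. Since $u,u',u''$ have finite limits at $\pm\infty$ and, by the observation recorded in the excerpt, $u',u''\to 0$, passing to the limit in $\cB u=0$ gives $g'(1)u_\pm=0$, forcing $u_\pm=0$. The asymptotic formulas then force the coefficient in front of the growing mode $e^{\lambda_2(c_1)z}$ at $+\infty$ (respectively $e^{\lambda_1(c_1)z}$ at $-\infty$) to vanish, so that $u$ in fact decays exponentially at both ends.

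Finally, the space of solutions of $\cB u=0$ decaying at $+\infty$ is one-dimensional, being spanned by the decaying member of the fundamental pair at $+\infty$; since $\zeta$ is by hypothesis a nontrivial element of this subspace, every such $u$ is a scalar multiple of $\zeta$. An identical argument applied to $\cB^*$, with exponents $-\lambda_{3-j}(c_1)$ in place of $\lambda_j(c_1)$, yields the corresponding statement for $\eta$. The only substantive obstacle is the invocation of Levinson's theorem, which requires the exponential decay of the coefficient remainders $\hat{b}_j(z)-\hat{b}_j(\pm\infty)$ and the spectral gap at the limit equations; both are provided here by \eqref{expdecay} and the saddle condition $g'(1)<0$.
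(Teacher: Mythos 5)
Your proof is correct, but it follows a genuinely different route from the paper's. The paper argues entirely via Abel's identity for the Wronskian: setting $\bw := u\zeta' - u'\zeta$, one gets $\bw' = -\hat b_1 \bw$, so $\bw(z) = C_0 \exp(-\int_0^z\hat b_1)$; since $\hat b_1 \to c_1 - f'(1)$, this behaves like $\bar C_0 e^{(f'(1)-c_1)z}$ which (using the saddle condition \eqref{H6}, $f'(1)\neq c_1$) blows up in one direction unless $C_0 = 0$, while on the other hand $\bw \to 0$ because $\zeta,\zeta'\to 0$ and $u,u'$ are bounded; hence $\bw \equiv 0$ and $u = k\zeta$. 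Your argument instead invokes Levinson's theorem (equivalently, roughness of exponential dichotomies) to get a fundamental pair asymptotic to $e^{\lambda_1 z}$ and $e^{\lambda_2 z}$, then passes to the limit in $\cB u = 0$ to force $u_\pm = 0$, and finally uses one-dimensionality of the subspace of solutions decaying at $+\infty$. Both are sound. The paper's Wronskian argument is more elementary and self-contained, avoiding any appeal to asymptotic-integration theory; on the other hand, your dichotomy argument does not actually need \eqref{H6} — the saddle condition $g'(1)<0$ alone guarantees the one growing/one decaying mode split, whereas the paper's blow-up dichotomy explicitly splits on the sign of $f'(1)-c_1$. (That said, \eqref{H6} is already in force throughout the appendix, so this is not a practical gain here.) One small point worth making explicit in your write-up: having shown $u\to 0$ and ruled out the growing mode, you should note that this puts $u$ in the one-dimensional stable subspace at $+\infty$ together with $\zeta$, which is what forces $u = k\zeta$; you gesture at this but it is the hinge of the conclusion.
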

\begin{proof}
Suppose $u \in X_1^2$ is a solution to $\cB u = 0$. Since $\lim_{z \to \pm \infty} u$ exists and $\lim_{z \to \pm \infty} d^ju / dz^j = 0$, $ j = 1,2$, it is then clear that the Wronskian
\[
\bw (z) := u \zeta' - u' \zeta,
\]
satisfies
\[
\bw' = - \hat{b}_1(z) \bw, \qquad z\in \R,
\]
\[
\bw \to 0, \qquad \text{as } \; z \to \pm \infty.
\]
Hence,
\[
\bw(z) = C_0 \exp \left( - \int_0^z \hat{b}_1(s) \, ds \right),
\]
for some constant $C_0 \in \R$. Now, since $\hat{b}_1(z) \to c_1 - f'(1)$ as $z \to \pm \infty$, it is easy to verify that
\[
\bw(z) = C_0 \exp \left( - \int_0^z \hat{b}_1(s) \, ds \right) \sim \overline{C}_0 \exp \big( (f'(1) - c_1) z\big),
\]
as $z \to \pm \infty$ for some other constant $\overline{C}_0 \in \R$. In view that
\[
\exp \big( (f'(1) - c_1) z\big) \rightarrow \begin{cases} 
\infty, & \text{if } \; f'(1) > c_1 \;\; \text{as } \, z \to \infty,\\ \infty, & \text{if } \; f'(1) < c_1 \;\; \text{as } \, z \to -\infty,
\end{cases}
\]
we conclude that $C_0 = 0$ (otherwise we contradict $\bw \to 0$ as $z \to \pm \infty$) and, hence, the Wronskian vanishes everywhere. This implies that $u = k \zeta$ for some constant $k \in \R$. The proof for the solution $v$ to $\cB^* v = 0$ is analogous.
\end{proof}

\begin{lemma}
\label{lemnondefgen}
The (homoclinic) connecting orbit pair $(\psi, \psi', c_1) \in X_2^1 \times \R$ for system \eqref{firstos} is non-degenerate in the sense of Definition \ref{defnondeg}.
\end{lemma}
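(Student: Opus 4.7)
The plan is to verify the three items of Definition \ref{defnondeg} in order. For the homoclinic pair we have $m = 2$, $p = 1$ and $y(z) \to (1,0)$ as $z \to \pm\infty$, so $\bA_+ = \bA_- = A_1$. By Remark \ref{remsamesad}, $A_1$ is a hyperbolic saddle, giving $m^s_+ = m^s_- = 1$. This settles (a) and shows $m^s_+ + m^s_- - 1 = 1 = p$, which is (b). For (c), I will write the variational system explicitly: with $w = (w_1,w_2)^\top$ and parameter $\mu \in \R$, elimination of $w_2 = w_1'$ reduces the system to the scalar inhomogeneous equation
\begin{equation*}
\cB w_1 := w_1'' + \hat{b}_1(z) w_1' + \hat{b}_0(z) w_1 = -\mu\, \psi'(z),
\end{equation*}
with $\hat{b}_0, \hat{b}_1$ as in \eqref{coefsbs}. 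Requiring $(w_1,w_1') \in X_2^1$ amounts to $w_1 \in X_1^2$, and because $\hat{b}_0, \hat{b}_1$ have limits at $\pm\infty$, any bounded solution automatically decays.

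For the homogeneous piece ($\mu = 0$), I would first exhibit $\zeta := \psi'$ as a decaying element of $\ker \cB$; this is immediate from differentiating \eqref{firstos}, as already used to derive \eqref{psi3}, and from the exponential decay \eqref{expdecay} of Corollary \ref{corpulse}. To invoke Lemma \ref{lemaux} I also need a decaying element of $\ker \cB^*$. The natural candidate is $\eta(z) := \omega(z)\psi'(z)$ with the weight $\omega(z) = \exp\bigl(\int_0^z \hat{b}_1(s)\,ds\bigr)$; a short computation (substituting $\eta = \omega\zeta$ into $\cB^*\eta$ and using $\omega' = \hat{b}_1\omega$) shows $\cB^*\eta = \omega\,\cB\zeta = 0$. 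The decay of $\eta$ at $\pm\infty$ reduces to the algebraic identity $\lambda_1(c_1) + \lambda_2(c_1) = f'(1) - c_1$ for the saddle eigenvalues \eqref{evaluessaddle}: since $\omega(z) \sim \mathrm{const}\cdot e^{(c_1 - f'(1))z}$ and $\psi'(z) \sim e^{\lambda_1(c_1)z}$ as $z\to+\infty$ (respectively $e^{\lambda_2(c_1)z}$ as $z\to-\infty$), the combined exponential rates are $-\lambda_2(c_1) < 0$ and $-\lambda_1(c_1) > 0$ respectively. Lemma \ref{lemaux} then forces every decaying solution of $\cB w_1 = 0$ to be a multiple of $\psi'$, yielding $w = k(dy/dz)$.

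To rule out $\mu \neq 0$ I will implement the Fredholm solvability condition. If $w_1 \in X_1^2$ solves $\cB w_1 = -\mu \psi'$, multiply by $\eta$ and integrate over $\R$; integration by parts, together with the decay of $w_1, w_1', \eta, \eta'$ at $\pm\infty$, yields
\begin{equation*}
0 = \int_\R (\cB w_1)\,\eta\,dz = -\mu \int_\R \omega(z)\bigl(\psi'(z)\bigr)^2 dz.
\end{equation*}
The integral on the right is strictly positive because $\omega > 0$ and $\psi' \not\equiv 0$, and it converges by the same exponential-rate bookkeeping as above (the combined rates are $\lambda_1(c_1) - \lambda_2(c_1) < 0$ and $\lambda_2(c_1) - \lambda_1(c_1) > 0$ at the two ends). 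Hence $\mu = 0$, and we are reduced to the homogeneous case already handled. The main technical obstacle I anticipate is justifying the integration by parts and the convergence of the solvability integral simultaneously; both hinge on the hyperbolicity of the rest point $P_1$ at $c = c_1$ and the explicit rates supplied by \eqref{evaluessaddle}, so no extra hypothesis beyond those already in force is needed.
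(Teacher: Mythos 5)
Your proof is correct, and the core ingredients coincide with those in the paper: the identification of the scalar second-order operator $\cB$ and its formal adjoint $\cB^*$, the use of Lemma \ref{lemaux}, the construction of the decaying adjoint-kernel element $\eta = \omega\,\psi'$ via the integrating factor $\omega(z) = \exp\bigl(\int_0^z\hat b_1(s)\,ds\bigr)$, and the non-vanishing of the Melnikov-type integral $\int_\R \omega\,(\psi')^2\,dz$. The route you take differs in one structural respect: you unwind Definition \ref{defnondeg}(c) directly by reducing the variational system to the scalar inhomogeneous equation $\cB w_1 = -\mu\psi'$ and applying the Fredholm alternative (multiply by $\eta$, integrate by parts, conclude $\mu=0$, then invoke Lemma \ref{lemaux} for the homogeneous case), whereas the paper instead appeals to Proposition 2.1 of Beyn \cite{Beyn90b}, which replaces condition (c) by the equivalent pair of conditions $\dim\ker\cA = 1 = \dim\ker\cA^*$ together with non-singularity of the Melnikov matrix $E$ in \eqref{intE}; the paper therefore constructs the full adjoint kernel element $\Psi = (\xi,\eta)^\top \in X_2^1$ with $\xi = \int_0^z\hat b_0\eta$ and computes $E$ explicitly. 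Your version is somewhat more self-contained (no reliance on Beyn's characterization), and your explicit bookkeeping of the asymptotic rates via the identity $\lambda_1(c_1)+\lambda_2(c_1) = f'(1)-c_1$ is a nice touch that the paper handles more implicitly. The trade-off is that by avoiding Beyn's proposition you must justify the integration by parts and its boundary terms by hand; you have done so, since $w_1,w_1'\to 0$ (forced by $\hat b_0^\infty = g'(1)\ne 0$), $\eta,\eta'\to 0$, and $\hat b_1$ is bounded. Both arguments are correct and computationally equivalent; yours is a direct implementation of the same Fredholm solvability condition that underlies Beyn's Proposition 2.1.
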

\begin{proof}
We apply Proposition 2.1 of Beyn \cite{Beyn90b}, which states that any connecting orbit pair $(y(z),\mu_*) \in X_m^1 \times \R^p$ for a generic system of the form \eqref{ds} is non-degenerate if and only if the matrices $\bA_\pm$ are hyperbolic and the linear operator
\[
\left\{
\begin{aligned}
\cA \, &: \, X_m^1 \to X_m^0,\\
\cA &:= \frac{d}{dz} - \bA(z),
\end{aligned}
\right.
\]
with $\bA(z) := D_y \bF(y(z),\mu_*)$, has the following properties:
\begin{itemize}
\item[(i)] $\dim \ker \cA = 1$, $\dim \ker \cA^* = p$; and,
\item[(ii)] the $p \times p$ matrix 
\begin{equation}
\label{intE}
E = \int_{-\infty}^{\infty} \Phi(z)^\top D_\mu \bF (y(z),\mu_*) \, dz,
\end{equation}
is non-singular, where the $p$ columns $\Phi_i \in X_m^1$, $i =1, \ldots, p$ of $\Phi$ form a basis of $\ker \cA^*$.
\end{itemize}
Here the operator $\cA^* : X_m^1 \to X_m^0$ is given by
\[
\cA^* = \frac{d}{dz} + \bA(z)^\top.
\]
(See Proposition 2.1, p. 383 in \cite{Beyn90b}, for further details.) In our case, for system \eqref{firstos} the matrices $\bA$ are given by
\[
\bA(z) = D_{(U,V)} \begin{pmatrix} F(U,V) \\ G(U,V) \end{pmatrix}_{|(U,V,c) = (\psi, \psi', c_1)} 
= \begin{pmatrix}
0 & 1 \\ f''(\psi(z))\psi'(z) - g'(\psi(z)) & -c_1 + f'(\psi(z))
\end{pmatrix},
\]
with asymptotic limits
\[
\bA_{\pm} = \lim_{z \to \pm \infty} \bA(z) = \begin{pmatrix} 0 & 1 \\ - g'(1) & -c_1 + f'(1) \end{pmatrix} = (A_1)_{|c=c_1},
\]
and with eigenvalues \eqref{evaluessaddle}. Hence, they are clearly hyperbolic with stable dimension $m^s_+ = m^s_- = 1$. Now, from Corollary \ref{corpulse} we know that $\psi \in C^3(\R)$ and hence
\[
\Phi(z) := \begin{pmatrix}
\psi' \\ \psi''
\end{pmatrix} \, \in \, X_2^1,
\]
is a solution to $\cA \Phi = 0$, because \eqref{psi3} can be written as
\[
\frac{d}{dz} \begin{pmatrix} \psi' \\ \psi'' \end{pmatrix} - \bA(z) \begin{pmatrix} \psi' \\ \psi'' \end{pmatrix}  = 0.
\]

If we define $\zeta(z) := \psi'(z)$, $z \in \R$, then $\zeta \in X_1^2$ and it is a solution to 
\[
\cB \zeta = \zeta'' + \hat{b}_1(z) \zeta' + \hat{b}_0(z) \zeta = 0,
\]
where the coefficients $\hat{b}_j(z)$, $j = 0,1$, are defined in \eqref{coefsbs}. Moreover, $\zeta \to 0$ as $z \to \pm \infty$. Therefore, by Lemma \ref{lemaux} we have that any other solution $u \in X_1^2$ to $\cB u = 0$ is a multiple of $\zeta$. This implies, in turn, that
\[
\dim \ker \cA = 1, \qquad \ker \cA = \Span \{ \Phi \} \subset X_2^1.
\]

Let us now define
\[
\eta(z) := \chi(z) \zeta(z), \qquad \chi(z) := \exp \left( \int_0^z \hat{b}_1(s) \, ds \right).
\]
Then upon differentiation
\[
\begin{aligned}
\chi' &= \hat{b}_1 \chi, & \chi'' &= \hat{b}_1' \chi + \hat{b}_1 \chi',\\
\eta' &= \chi' \zeta + \chi \zeta', & \eta'' &= \chi'' \zeta + 2 \chi' \zeta' + \chi \zeta'',
\end{aligned}
\]
yielding
\[
\cB^* \eta = \eta'' - \hat{b}_1 \eta' + (\hat{b}_0 - \hat{b}_1') \eta = \chi \big( \zeta'' + \hat{b}_1 \zeta' + \hat{b}_0 \zeta \big) = \chi \cB \zeta = 0.
\]
Moreover, $\eta = \chi \zeta \in X_1^2$. Indeed, it is clear that $\eta \in C^2$, inasmuch as $\zeta, \chi \in C^2$. In addition, 
\[
\chi(z) \sim \exp \big( (c_1 - f'(1)) z\big),
\]
as $z \to \pm \infty$. But since $\zeta$ decays as $\exp(\lambda_1(c_1)z)$ as $z \to \infty$ and as $\exp(\lambda_2(c_1) z)$ as $z \to -\infty$, where $\lambda_1$ and $\lambda_2$ are given by \eqref{evaluessaddle}, it is easy to verify that $\eta \to 0$ as $z \to \pm \infty$. For example, if $z \to \infty$ we have
\[
\eta = \chi \zeta \sim \exp \left( \Big[ \tfrac{1}{2}(c_1 - f'(1)) - \tfrac{1}{2} \sqrt{(c_1-f'(1))^2 - 4g'(1)} \Big] z\right) \to 0,
\]
independently of the sign of $c_1 - f'(1)$. In the same fashion, $\eta \to 0$ as $z \to - \infty$. Finally, it is clear that $\eta', \eta'' \to 0$ as $z \to \pm \infty$. Whence, we have that $\eta \in X_1^2$ and Lemma \ref{lemaux} implies that any other solution $v \in X_1^2$ of $\cB^* v = 0$ is a multiple of $\eta$. If we further define
\[
\xi(z) := \int_0^z \hat{b}_0(s) \eta(s) \, ds,
\]
then from the exponential decay of $\eta$ and boundedness of $\hat{b}_0$ it is clear that $\xi$ has finite limits as $z \to \pm \infty$. Also, $\xi' = \eta \hat{b}_0 \to 0$ and $\xi'' = \eta' \hat{b}_0 + \eta \hat{b}_0' \to 0$ as $z \to \pm \infty$. We conclude that
\[
\Psi(z) := \begin{pmatrix} \xi \\ \eta \end{pmatrix} \in X_2^1
\]
is the only (up to constants) solution to 
\[
\cA^* \Psi = \frac{d}{dz} \Psi + \bA(z)^\top \Psi = 0.
\]
This yields $\dim \ker \cA^*  =1$ and $\ker \cA^* = \Span \{ \Psi \} \subset X_2^1$. Condition (i) is therefore verified inasmuch as we have one bifurcation parameter $c$ and $p = 1$.

Finally, the integral in \eqref{intE} reduces to
\[
\begin{aligned}
E = \int_{-\infty}^\infty \begin{pmatrix} \xi \\ \eta \end{pmatrix}^\top \!\!\partial_c \begin{pmatrix} F \\ G \end{pmatrix}_{|(\psi,\psi',c_1)} \! dz 
&= \int_{-\infty}^\infty \begin{pmatrix} \xi \\ \eta \end{pmatrix}^\top \!\!\begin{pmatrix} 0 \\ -\psi'(z) \end{pmatrix} \, dz  \\
&= - \int_{-\infty}^\infty \chi(z) \psi'(z)^2 \, dz\\
&= - \int_{-\infty}^{\infty} \exp \left( \int_0^z \hat{b}_1(s) \, ds \right) \psi'(z)^2 \, dz \neq 0.
\end{aligned}
\]
verifying, in this fashion, condition (ii). The lemma is proved.
\end{proof}

\def\cprime{$'\!\!$} \def\cprimel{$'\!$}





\begin{thebibliography}{10}

\bibitem{AGJ90}
{\sc J.~Alexander, R.~A. Gardner, and C.~K. R.~T. Jones}, {\em A topological
  invariant arising in the stability analysis of travelling waves}, J. Reine
  Angew. Math. \textbf{410} (1990), pp.~167--212.

\bibitem{AlPl1}
{\sc E.~\'{A}lvarez and R.~G. Plaza}, {\em Spectral vs. modulational stability
  of bounded periodic wavetrains for the {B}urgers-{F}isher equation}.
\newblock In preparation.

\bibitem{Andro29}
{\sc A.~A. Andronov}, {\em Les cycles limites de {P}oincar\'{e} et la
  th\'{e}orie des oscillations auto–entretenues}, C.R. Acad. Sci. Paris
  \textbf{189} (1929), pp.~559--561.

\bibitem{AnLeo37}
{\sc A.~A. Andronov and E.~A. Leontovich}, {\em Some cases of dependence of
  limit cycles on a parameter}, Uchen. Zap. Gork. Univ. (Research notes of
  Gorky University) \textbf{6} (1937), pp.~3--24.

\bibitem{ALGM73}
{\sc A.~A. Andronov, E.~A. Leontovich, I.~I. Gordon, and A.~G. Ma\u{\i}er},
  {\em Theory of bifurcations of dynamic systems on a plane}, Halsted Press [A
  division of John Wiley \& Sons], New York-Toronto, Ont.; Israel Program for
  Scientific Translations, Jerusalem-London, 1973.
\newblock Translated from the Russian.

\bibitem{AronWein75}
{\sc D.~G. Aronson and H.~F. Weinberger}, {\em Nonlinear diffusion in
  population genetics, combustion, and nerve pulse propagation}, in Partial
  differential equations and related topics, J.~Goldstein, ed., vol.~446 of
  Lecture Notes in Mathematics, Springer, New York, 1975, pp.~5--49.

\bibitem{BJNRZ10}
{\sc B.~Barker, M.~A. Johnson, P.~Noble, L.~M. Rodrigues, and K.~Zumbrun}, {\em
  Whitham averaged equations and modulational stability of periodic traveling
  waves of a hyperbolic-parabolic balance law}, Journ{\'{e}}es {\'{E}}quations
  aux dériv{\'{e}}es partielles \textbf{6} (2010), pp.~1--24.

\bibitem{Beyn90a}
{\sc W.-J. Beyn}, {\em Global bifurcations and their numerical computation}, in
  Continuation and bifurcations: numerical techniques and applications
  ({L}euven, 1989), D.~Roose, B.~De~Dier, and A.~Spence, eds., vol.~313 of NATO
  Adv. Sci. Inst. Ser. C Math. Phys. Sci., Kluwer Acad. Publ., Dordrecht, 1990,
  pp.~169--181.

\bibitem{Beyn90b}
{\sc W.-J. Beyn}, {\em The numerical
  computation of connecting orbits in dynamical systems}, IMA J. Numer. Anal.
  \textbf{10} (1990), no.~3, pp.~379--405.

\bibitem{BuLe42}
{\sc S.~E. Buckley and M.~C. Leverett}, {\em Mechanism of fluid displacements
  in sands}, Trans. AIME (Am. Inst. Min. Metall.) \textbf{146} (1942),
  pp.~107--116.

\bibitem{Bur48}
{\sc J.~M. Burgers}, {\em A mathematical model illustrating the theory of
  turbulence}, in Advances in {A}pplied {M}echanics, R.~von Mises and T.~von
  K{\'a}rm{\'a}n, eds., Academic Press Inc., New York, N. Y., 1948,
  pp.~171--199.

\bibitem{ChGl96}
{\sc G.-Q. Chen and J.~Glimm}, {\em Global solutions to the compressible
  {E}uler equations with geometrical structure}, Comm. Math. Phys. \textbf{180}
  (1996), no.~1, pp.~153--193.

\bibitem{ChZh04}
{\sc H.~Chen and H.~Zhang}, {\em New multiple soliton solutions to the general
  {B}urgers-{F}isher equation and the {K}uramoto-{S}ivashinsky equation}, Chaos
  Solitons Fract. \textbf{19} (2004), no.~1, pp.~71--76.

\bibitem{Chi99}
{\sc C.~Chicone}, {\em Ordinary differential equations with applications},
  vol.~34 of Texts in Applied Mathematics, Springer-Verlag, New York, 1999.

\bibitem{CHM-P80}
{\sc S.~N. Chow, J.~K. Hale, and J.~Mallet-Paret}, {\em An example of
  bifurcation to homoclinic orbits}, J. Differ. Equ. \textbf{37} (1980), no.~3,
  pp.~351--373.

\bibitem{CoMR86}
{\sc P.~Colella, A.~Majda, and V.~Roytburd}, {\em Theoretical and numerical
  structure for reacting shock waves}, SIAM J. Sci. Statist. Comput. \textbf{7}
  (1986), no.~4, pp.~1059--1080.

\bibitem{CroMas07}
{\sc E.~C.~M. Crooks and C.~Mascia}, {\em Front speeds in the vanishing
  diffusion limit for reaction-diffusion-convection equations}, Differ.
  Integral Equ. \textbf{20} (2007), no.~5, pp.~499--514.

\bibitem{Da84}
{\sc C.~M. Dafermos}, {\em Large time behavior of solutions of hyperbolic
  balance laws}, Bull. Soc. Math. Gr\`ece (N.S.) \textbf{25} (1984),
  pp.~15--29.

\bibitem{Da4e}
{\sc C.~M. Dafermos}, {\em Hyperbolic
  conservation laws in continuum physics}, vol.~325 of Grundlehren der
  Mathematischen Wissenschaften, Springer-Verlag, Berlin, fourth~ed., 2016.

\bibitem{DuRo20}
{\sc V.~Duch\^{e}ne and L.~M. Rodrigues}, {\em Large-time asymptotic stability
  of {R}iemann shocks of scalar balance laws}, SIAM J. Math. Anal. \textbf{52}
  (2020), no.~1, pp.~792--820.

\bibitem{DunSch2}
{\sc N.~Dunford and J.~T. Schwartz}, {\em Linear operators. {P}art {II}:
  Spectral theory. Selfadjoint operators in Hilbert space}, Wiley Classics
  Library, John Wiley \& Sons Inc., New York, 1988.

\bibitem{EE87}
{\sc D.~E. Edmunds and W.~D. Evans}, {\em Spectral theory and differential
  operators}, Oxford Mathematical Monographs, The Clarendon Press, Oxford
  University Press, New York, 1987.
\newblock Oxford Science Publications.

\bibitem{Erde56}
{\sc A.~Erd\'elyi}, {\em Asymptotic expansions}, Dover Publications, Inc., New
  York, 1956.

\bibitem{FanHa93}
{\sc H.~T. Fan and J.~K. Hale}, {\em Large time behavior in inhomogeneous
  conservation laws}, Arch. Rational Mech. Anal. \textbf{125} (1993), no.~3,
  pp.~201--216.

\bibitem{FanHa95}
{\sc H.~T. Fan and J.~K. Hale}, {\em Attractors in
  inhomogeneous conservation laws and parabolic regularizations}, Trans. Amer.
  Math. Soc. \textbf{347} (1995), no.~4, pp.~1239--1254.

\bibitem{Fif79}
{\sc P.~C. Fife}, {\em Mathematical aspects of reacting and diffusing systems},
  vol.~28 of Lecture Notes in Biomathematics, Springer-Verlag, Berlin, 1979.

\bibitem{FiM77}
{\sc P.~C. Fife and J.~B. McLeod}, {\em The approach of solutions of nonlinear
  diffusion equations to travelling front solutions}, Arch. Ration. Mech. Anal.
  \textbf{65} (1977), no.~4, pp.~335--361.

\bibitem{Fis37}
{\sc R.~A. Fisher}, {\em The wave of advance of advantageous genes}, Ann.
  Eugen. \textbf{7} (1937), pp.~355--369.

\bibitem{Grd1}
{\sc R.~A. Gardner}, {\em On the structure of the spectra of periodic
  travelling waves}, J. Math. Pures Appl. (9) \textbf{72} (1993), no.~5,
  pp.~415--439.

\bibitem{Grd2}
{\sc R.~A. Gardner}, {\em Spectral analysis of
  long wavelength periodic waves and applications}, J. Reine Angew. Math.
  \textbf{491} (1997), pp.~149--181.

\bibitem{Gasp90}
{\sc P.~Gaspard}, {\em Measurement of the instability rate of a
  far-from-equilibrium steady state at an infinite period bifurcation}, J.
  Phys. Chem. \textbf{94} (1990), no.~1, pp.~1--3.

\bibitem{GrilSha90}
{\sc M.~Grillakis, J.~Shatah, and W.~Strauss}, {\em Stability theory of
  solitary waves in the presence of symmetry. {II}}, J. Funct. Anal.
  \textbf{94} (1990), no.~2, pp.~308--348.

\bibitem{GuHo83}
{\sc J.~Guckenheimer and P.~Holmes}, {\em Nonlinear oscillations, dynamical
  systems, and bifurcations of vector fields}, vol.~42 of Applied Mathematical
  Sciences, Springer-Verlag, New York, 1983.

\bibitem{HaKo91}
{\sc J.~K. Hale and H.~Ko\c{c}ak}, {\em Dynamics and bifurcations}, vol.~3 of
  Texts in Applied Mathematics, Springer-Verlag, New York, 1991.

\bibitem{HaYu12}
{\sc M.~Han and P.~Yu}, {\em Normal forms, {M}elnikov functions and
  bifurcations of limit cycles}, vol.~181 of Applied Mathematical Sciences,
  Springer, London, 2012.

\bibitem{HaSch06}
{\sc M.~Haragus and A.~Scheel}, {\em Corner defects in almost planar interface
  propagation}, Ann. Inst. H. Poincar\'{e} Anal. Non Lin\'{e}aire \textbf{23}
  (2006), no.~3, pp.~283--329.

\bibitem{Hae00}
{\sc J.~H\"{a}rterich}, {\em Viscous profiles for traveling waves of scalar
  balance laws: the uniformly hyperbolic case}, Electron. J. Differ. Eq.
  \textbf{2000} (2000), pp.~No. 30, 22.

\bibitem{Hae03}
{\sc J.~H\"{a}rterich}, {\em Viscous profiles of
  traveling waves in scalar balance laws: the canard case}, Methods Appl. Anal.
  \textbf{10} (2003), no.~1, pp.~97--117.

\bibitem{HiSi96}
{\sc P.~D. Hislop and I.~M. Sigal}, {\em Introduction to spectral theory. With
  applications to Schr{\"o}dinger operators}, vol.~113 of Applied Mathematical
  Sciences, Springer-Verlag, New York, 1996.

\bibitem{Hop42}
{\sc E.~Hopf}, {\em Abzweigung einer periodischen {L}\"osung von einer
  station\"aren {L}\"osung eines {D}ifferentialsystems}, Ber. Math.-Phys. Kl.
  S\"achs. Akad. Wiss. Leipzig \textbf{94} (1942), pp.~1--22.

\bibitem{IoIo01}
{\sc R.~J. Iorio, Jr. and V.~d.~M. Iorio}, {\em Fourier Analysis and Partial
  Differential Equations}, vol.~70 of Cambridge Studies in Advanced
  Mathematics, Cambridge University Press, Cambridge, 2001.

\bibitem{JMMP2}
{\sc C.~K. R.~T. Jones, R.~Marangell, P.~D. Miller, and R.~G. Plaza}, {\em
  Spectral and modulational stability of periodic wavetrains for the nonlinear
  {K}lein-{G}ordon equation}, J. Differ. Equ. \textbf{257} (2014), no.~12,
  pp.~4632--4703.

\bibitem{JMMP3}
{\sc C.~K. R.~T. Jones, R.~Marangell, P.~D. Miller, and R.~G. Plaza}, {\em On the spectral and
  modulational stability of periodic wavetrains for nonlinear {K}lein-{G}ordon
  equations}, Bull. Braz. Math. Soc. (N.S.) \textbf{47} (2016), no.~2,
  pp.~417--429.

\bibitem{KaPro13}
{\sc T.~Kapitula and K.~Promislow}, {\em Spectral and dynamical stability of
  nonlinear waves}, vol.~185 of Applied Mathematical Sciences, Springer-Verlag,
  New York, 2013.

\bibitem{Kat80}
{\sc T.~Kato}, {\em Perturbation Theory for Linear Operators}, Classics in
  Mathematics, Springer-{V}erlag, {N}ew {Y}ork, {S}econd~ed., 1980.

\bibitem{KaEl-S04}
{\sc D.~Kaya and S.~El-Sayed}, {\em A numerical simulation and explicit
  solutions of the generalized {B}urgers-{F}isher equation}, Appl. Math.
  Comput. \textbf{152} (2004), no.~2, pp.~403--413.

\bibitem{KDT19}
{\sc R.~Koll\'{a}r, B.~Deconinck, and O.~Trichtchenko}, {\em Direct
  characterization of spectral stability of small-amplitude periodic waves in
  scalar {H}amiltonian problems via dispersion relation}, SIAM J. Math. Anal.
  \textbf{51} (2019), no.~4, pp.~3145--3169.

\bibitem{KPP37}
{\sc A.~N. Kolmogorov, I.~Petrovsky, and N.~Piskunov}, {\em Etude de
  l’{\'e}quation de la diffusion avec croissance de la quantit{\'e} de
  matiere et son applicationa un probleme biologique}, Mosc. Univ. Bull. Math
  \textbf{1} (1937), pp.~1--25.

\bibitem{Kuz982e}
{\sc Y.~A. Kuznetsov}, {\em Elements of applied bifurcation theory}, vol.~112
  of Applied Mathematical Sciences, Springer-Verlag, New York, second~ed.,
  1998.

\bibitem{La57}
{\sc P.~D. Lax}, {\em Hyperbolic systems of conservation laws {I}{I}}, Comm.
  Pure Appl. Math. \textbf{10} (1957), pp.~537--566.

\bibitem{LeHa16}
{\sc J.~A. Leach and E.~Hana\c{c}}, {\em On the evolution of travelling wave
  solutions of the {B}urgers-{F}isher equation}, Quart. Appl. Math. \textbf{74}
  (2016), no.~2, pp.~337--359.

\bibitem{LeF02}
{\sc P.~G. LeFloch}, {\em Hyperbolic systems of conservation laws: The theory
  of classical and nonclassical shock waves}, Lectures in Mathematics ETH
  Z\"urich, Birkh\"auser Verlag, Basel, 2002.

\bibitem{Lv02}
{\sc R.~J. LeVeque}, {\em Finite volume methods for hyperbolic problems},
  Cambridge Texts in Applied Mathematics, Cambridge University Press,
  Cambridge, 2002.

\bibitem{Lop02}
{\sc O.~Lopes}, {\em A linearized instability result for solitary waves},
  Discrete Contin. Dyn. Syst. \textbf{8} (2002), no.~1, pp.~115--119.

\bibitem{LuYJ07}
{\sc J.~Lu, G.~Yu-Cui, and X.~Shu-Jiang}, {\em Some new exact solutions to the
  {B}urgers-{F}isher equation and generalized {B}urgers-{F}isher equation},
  Chin. Phys. \textbf{16} (2007), no.~9, p.~2514.

\bibitem{MaMcC76}
{\sc J.~E. Marsden and M.~McCracken}, {\em The {H}opf bifurcation and its
  applications}, vol.~19 of Applied Mathematical Sciences, Springer-Verlag, New
  York, 1976.

\bibitem{Masc97a}
{\sc C.~Mascia}, {\em Travelling wave solutions for a balance law}, Proc. Roy.
  Soc. Edinburgh Sect. A \textbf{127} (1997), no.~3, pp.~567--593.

\bibitem{Masc98}
{\sc C.~Mascia}, {\em Continuity in finite
  time of entropy solutions for nonconvex conservation laws with reaction
  term}, Commun. Partial Differ. Equ. \textbf{23} (1998), no.~5-6,
  pp.~913--931.

\bibitem{Masc00}
{\sc C.~Mascia}, {\em Qualitative behavior
  of conservation laws with reaction term and nonconvex flux}, Quart. Appl.
  Math. \textbf{58} (2000), no.~4, pp.~739--761.

\bibitem{MasSi97}
{\sc C.~Mascia and C.~Sinestrari}, {\em The perturbed {R}iemann problem for a
  balance law}, Adv. Differ. Equ. \textbf{2} (1997), no.~5, pp.~779--810.

\bibitem{Melni63}
{\sc V.~K. Mel\cprime~nikov}, {\em On the stability of a center for
  time-periodic perturbations}, Trudy Moskov. Mat. Ob\v{s}\v{c}. \textbf{12}
  (1963), pp.~3--52.

\bibitem{MiGu02}
{\sc R.~E. Mickens and A.~B. Gumel}, {\em Construction and analysis of a
  non-standard finite difference scheme for the {B}urgers-{F}isher equation},
  J. Sound Vib. \textbf{257} (2002), no.~4, pp.~791--797.

\bibitem{Mill06}
{\sc P.~D. Miller}, {\em Applied Asymptotic Analysis}, vol.~75 of Graduate
  Studies in Mathematics, American Mathematical Society, Providence, RI, 2006.

\bibitem{Nob07}
{\sc P.~Noble}, {\em Roll-waves in general hyperbolic systems with source
  terms}, SIAM J. Appl. Math. \textbf{67} (2007), no.~4, pp.~1202--1212
  (electronic).

\bibitem{San02}
{\sc B.~Sandstede}, {\em Stability of travelling waves}, in Handbook of
  dynamical systems, {V}ol. 2, B.~Fiedler, ed., North-Holland, Amsterdam, 2002,
  pp.~983--1055.

\bibitem{SS6}
{\sc B.~Sandstede and A.~Scheel}, {\em On the stability of periodic travelling
  waves with large spatial period}, J. Differ. Equ. \textbf{172} (2001), no.~1,
  pp.~134--188.

\bibitem{Sba12}
{\sc L.~Sbano}, {\em Periodic orbits of {H}amiltonian systems}, in Mathematics
  of complexity and dynamical systems. {V}ols. 1--3, R.~A. Meyers, ed.,
  Springer, New York, 2012, pp.~1212--1235.

\bibitem{Scha80}
{\sc R.~Schaaf}, {\em A class of {H}amiltonian systems with increasing
  periods}, J. Reine Angew. Math. \textbf{363} (1985), pp.~96--109.

\bibitem{Scholl01}
{\sc E.~Sch\"{o}ll}, {\em Nonlinear Spatio-Temporal Dynamics and Chaos in
  Semiconductors}, vol.~10 of Cambridge Nonlinear Science Series, Cambridge
  University Press, Cambridge, UK, 2001.

\bibitem{ShaStr00}
{\sc J.~Shatah and W.~Strauss}, {\em Spectral condition for instability}, in
  Nonlinear {PDE}'s, dynamics and continuum physics ({S}outh {H}adley, {MA},
  1998), J.~Bona, K.~Saxton, and R.~Saxton, eds., vol.~255 of Contemp. Math.,
  Amer. Math. Soc., Providence, RI, 2000, pp.~189--198.

\bibitem{SSTC01}
{\sc L.~P. Shilnikov, A.~L. Shilnikov, D.~Turaev, and L.~O. Chua}, {\em Methods
  of qualitative theory in nonlinear dynamics. {P}art {II}}, vol.~5 of World
  Scientific Series on Nonlinear Science. Series A: Monographs and Treatises,
  World Scientific Publishing Co., Inc., River Edge, NJ, 2001.

\bibitem{Snst97}
{\sc C.~Sinestrari}, {\em Instability of discontinuous traveling waves for
  hyperbolic balance laws}, J. Differ. Equ. \textbf{134} (1997), no.~2,
  pp.~269--285.

\bibitem{Strg94}
{\sc S.~H. Strogatz}, {\em Nonlinear Dynamics and Chaos: with Applications to
  Physics, Biology, Chemistry, and Engineering}, Studies in Nonlinearity,
  Perseus Books Group, 1994.

\bibitem{Vall18b}
{\sc C.~Valls}, {\em Algebraic traveling waves for the generalized viscous
  {B}urgers equation}, J. Math. Anal. Appl. \textbf{467} (2018), no.~1,
  pp.~768--783.

\bibitem{Vall18a}
{\sc C.~Valls}, {\em Algebraic travelling
  waves for the generalized {B}urgers-{F}isher equation}, Quaest. Math.
  \textbf{41} (2018), no.~7, pp.~903--916.

\bibitem{WuXi05}
{\sc Y.~Wu and X.~Xing}, {\em The stability of travelling fronts for general
  scalar viscous balance law}, J. Math. Anal. Appl. \textbf{305} (2005), no.~2,
  pp.~698--711.

\bibitem{Xing05}
{\sc X.-x. Xing}, {\em Existence and stability of viscous shock waves for
  non-convex viscous balance law}, Adv. Math. (China) \textbf{34} (2005),
  no.~1, pp.~43--53.

\bibitem{XuJJ16}
{\sc T.~Xu, C.~Jin, and S.~Ji}, {\em Discontinuous traveling waves for scalar
  hyperbolic-parabolic balance law}, Bound. Value Probl. \textbf{2016} (2016),
  no.~31, pp.~1--9.

\bibitem{YngZ19}
{\sc Z.~Yang and K.~Zumbrun}, {\em Convergence as period goes to infinity of
  spectra of periodic traveling waves toward essential spectra of a homoclinic
  limit}, J. Math. Pures Appl. (9) \textbf{132} (2019), pp.~27--40.

\bibitem{ZLZ13}
{\sc Y.~Zhou, Q.~Liu, and W.~Zhang}, {\em Bounded traveling waves of the
  generalized {B}urgers-{F}isher equation}, Internat. J. Bifur. Chaos Appl.
  Sci. Engrg. \textbf{23} (2013), no.~3, pp.~1350054, 11.

\bibitem{Z16}
{\sc K.~Zumbrun}, {\em 2-modified characteristic {F}redholm determinants,
  {H}ill's method, and the periodic {E}vans function of {G}ardner}, Z. Anal.
  Anwend. \textbf{31} (2012), no.~4, pp.~463--472.

\end{thebibliography}

\end{document}